\numberwithin{equation}{section}
\numberwithin{figure}{section}
\newcommand\R{\mathbb{R}}
 \newcommand\CC{\mathbb{C}}
\newcommand\Z{\mathbb{Z}}
\newcommand\F{\mathcal{F}}
\newcommand\Gam{\Gamma}
\newcommand\lam{\lambda}
\newcommand\Lam{\Lambda}
\newcommand\del{\delta}
\newcommand\sig{\sigma}
\newcommand\Om{\Omega}
\newcommand\1{\mathds{1}}
\newcommand\eps{\varepsilon}
\renewcommand\le{\leqslant}
\renewcommand\leq{\leqslant}
\renewcommand\geq{\geqslant}
\newcommand\sbt{\subset}
\renewcommand\hat{\widehat}
\renewcommand\Re{\operatorname{Re}}
\newcommand{\ft}[1]{\widehat #1}
\newcommand{\mes}{\operatorname{mes}}
\newcommand{\supp}{\operatorname{supp}}
\newcommand{\dist}{\operatorname{dist}}
\newcommand{\sign}{\operatorname{sign}}
\newcommand{\half}{\tfrac{1}{2}}
\theoremstyle{plain}
\newtheorem{thm}{Theorem}[section]
\newtheorem{lem}[thm]{Lemma}
\newtheorem*{claim*}{Claim}
\newcommand{\thmref}[1]{Theorem~\ref{#1}}
\newcommand{\secref}[1]{Section~\ref{#1}}
\newcommand{\lemref}[1]{Lemma~\ref{#1}}
\newcommand{\defref}[1]{Definition~\ref{#1}}
\theoremstyle{definition}
\newtheorem{definition}[thm]{Definition}
\newtheorem*{definition*}{Definition}
\newtheorem*{remarks*}{Remarks}
\newtheorem*{remark*}{Remark}
\newenvironment{enumerate-roman}
{\begin{enumerate}
}
{\end{enumerate}}
\newenvironment{enumerate-alph}
{\begin{enumerate}
}
{\end{enumerate}}
\newenvironment{enumerate-num}
{\begin{enumerate}
}
{\end{enumerate}}
\newenvironment{enumerate-text}
{\begin{enumerate}
}
{\end{enumerate}}
\begin{document}

\begin{frontmatter}[classification=text]
%% EDITOR: this will force the keywords to appear right after the Abstract.
%%   If the abstract is too long and would force the keywords off the
%%   front page, please comment out % [classification=text] above
%%   This way the keywords will be floated on the bottom of the first page
%%   even though the Abstract spills over to the next page.

%%% AUTHOR: Title goes here.  This line is optional.  You must use it
%%   if title has footnote attached or requires nontrivial typesetting,
%%   e.g., inclusion of linebreaks to force nice layout.

%\title{Tiling by translates of a function: results and open problems} % \footnote{This is a footnote to the title}} %% please capitalize all significant words

%%% AUTHOR:
%%% List all authors. If you wish, place grant acknowledgements in \thanks.
%%% In brackets include a short tag for each author.

\author[kolount]{Mihail N. Kolountzakis\thanks{Supported by the Hellenic Foundation for Research and Innovation, Project HFRI-FM17-1733 and by Grant No.\ 4725 of the University of Crete.}}
\author[lev]{Nir Lev\thanks{Supported by ISF Grant No.\ 227/17 and ERC Starting Grant No.\ 713927.}}

%%% AUTHOR: Abstract goes here

\begin{abstract}
We say that a function $f \in L^1(\mathbb{R})$ \emph{tiles} at level $w$ by a discrete translation set $\Lambda \subset \mathbb{R}$, if we have $\sum_{\lambda \in \Lambda} f(x-\lambda)=w$ a.e. In this paper we survey the main results, and prove several new ones, on the structure of tilings of $\mathbb{R}$ by translates of a function. The phenomena discussed include tilings of bounded and of unbounded density, uniform distribution of the translates, periodic and non-periodic tilings, and tilings at level zero. Fourier analysis plays an important role in the proofs. Some open problems are also given.
\end{abstract}
\end{frontmatter}

%%% AUTHOR: body of paper starts here

% =======================================

\section{Introduction} \label{secI1}

Let $f$ be a function in $L^1(\R)$ and let
$\Lam \sbt \R$ be a discrete set. 
We say that \emph{$f$ tiles $\R$
at level $w$} with the translation set $\Lam$,
or that 
\emph{$f+\Lam$ is a tiling of $\R$
at level $w$} (where $w$ is a constant), if  we have
\begin{equation}
\label{eqI1.1}
\sum_{\lambda\in\Lambda}f(x-\lambda)=w\quad\text{a.e.}
\end{equation}
and the series in \eqref{eqI1.1} converges absolutely a.e.

In the same way one can define tiling 
by translates of an $L^1$ function on 
$\R^d$, or  more generally, on any locally compact
 abelian group. The finite abelian groups, 
and in particular the cyclic ones,
are an important class being often considered.

If $f = \1_\Omega$ 
is the indicator function of a set $\Omega$,
and $f + \Lam$ is a tiling at level one, then
this  means
that the translated copies $\Omega+\lam$, $\lam\in\Lam$,
fill the whole space without overlaps up to measure zero. 
To the contrary, for tilings  by a
 general real or complex-valued function
$f$, the translated copies may have 
overlapping supports
 and a wider variety of phenomena may (and do) occur.

In dimension one, translational 
tilings exhibit a stronger structure than in
 higher dimensions, and there are
 interesting questions as to how rigid this
structure must be, e.g.\ how close  the translation set 
$\Lam$ is to being periodic, or to being constructed out
of periodic sets, or, at an even more basic level, 
to being uniformly distributed in $\R$.

The subject has been studied by several authors,
see, in particular, \cite{LM91}, \cite{KL96}, \cite{Kol04}, 
\cite{KL16}, \cite{Liu18}, \cite{Lev20}. 
The aim of this paper is to survey the results 
obtained in earlier works, as well as to prove 
several new results.  At the end of the paper,
some open problems are also given.

In this section we survey the main 
results on the structure of tilings
by translates of a function on $\R$,
 and we state the new results that
will be proved in this paper.

\subsection{Tiling and density}
We say that a set
$\Lambda \sbt \R$ has \emph{bounded density} if
 \begin{equation}
\label{eqI10.1}
 \sup_{x\in\mathbb R} \#(\Lambda\cap[x,x+1))<+\infty.
 \end{equation}
The set $\Lambda$ is said to be
\emph{uniformly distributed} if there is
a number $D(\Lam)$ satisfying
 \begin{equation}
\label{eqI10.2}
\#(\Lambda\cap[x,x+r)) =  D(\Lam) \cdot r  + o(r), \quad r \to +\infty
 \end{equation}
uniformly with respect to $x \in \R$. 
In this case, $D(\Lam)$ is called the
\emph{uniform density} of   $\Lam$.

The following result establishes
a connection between tiling and density:

\begin{thm}[{\cite{KL96}}]
\label{thmKL1}
Let $f+\Lam$ be a tiling at some nonzero level $w$, 
where $f \in L^1(\R)$ and where 
$\Lam \sbt \R$ is a set of bounded density.
Then $f$ has nonzero integral, and $\Lam$ has a uniform
density given by $D(\Lam) = w \cdot (\int f)^{-1}$.
\end{thm}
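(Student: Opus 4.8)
The plan is to pass to the Fourier side and exploit the tiling equation \eqref{eqI1.1} as an identity of tempered distributions. Since $\Lam$ has bounded density, the measure $\mu = \sum_{\lambda \in \Lam} \delta_\lambda$ is a translation-bounded measure, hence a tempered distribution, and the tiling condition says precisely that $f * \mu = w$ as distributions. Taking Fourier transforms gives $\ft{f} \cdot \ft{\mu} = w \,\delta_0$. Because $f \in L^1(\R)$, the function $\ft{f}$ is continuous and bounded, so the support of $\ft{\mu}$ is forced into the zero set $\zfft$ together with possibly the origin; more precisely, away from $\zfft$ the distribution $\ft{\mu}$ must vanish, and near the origin, if $\ft{f}(0) = \int f \ne 0$, then $\ft{\mu}$ coincides with $(w / \ft{f}(0))\,\delta_0$ in a neighbourhood of $0$. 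The first thing to establish, then, is that $\int f \ne 0$: if $\ft{f}(0) = 0$ and $\ft f$ is continuous, one shows the equation $\ft f \cdot \ft\mu = w\delta_0$ with $w \ne 0$ is impossible, since the right-hand side is a nonzero multiple of $\delta_0$ but the left-hand side cannot have a $\delta_0$ component when $\ft f$ vanishes at $0$ (one can test against a smooth bump and let its support shrink, using continuity of $\ft f$ to see the pairing tends to $0$).

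Next I would extract the density statement. Having shown $\int f \ne 0$, set $D := w / \int f$. The idea is that $\ft{\mu} - D\,\delta_0$ is a tempered distribution supported in the closed set $\zfft$, which (since $\ft f$ is continuous and $\ft f(0)\ne0$) is bounded away from $0$. Equivalently, the "corrected" measure $\mu$ has, in a Fourier sense, no mass at frequencies near $0$ except for the pure point $D\delta_0$ coming from the mean density. This is exactly the analytic input of a Wiener-type / Beurling-type equidistribution theorem: a translation-bounded measure $\mu$ whose Fourier transform near the origin consists only of $D\delta_0$ must satisfy $\mu([x,x+r)) = Dr + o(r)$ uniformly in $x$. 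Concretely, I would convolve $\mu$ with a fixed smooth approximate identity $\varphi_r(t) = \tfrac1r \varphi(t/r)$ whose Fourier transform is supported near $0$ and equals a suitable constant at $0$; then $\mu * \varphi_r \to D$ uniformly, and a standard comparison argument (sandwiching $\1_{[x,x+r)}$ between dilates of $\varphi$) upgrades this to the counting asymptotic \eqref{eqI10.2}, giving $D(\Lam) = D = w \cdot (\int f)^{-1}$.

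An alternative, perhaps cleaner, route to the same conclusion: since the tiling equation is $f * \mu = w$, convolve once more against any $g \in L^1(\R)$ with $\int g = 1$ to get $(f * g) * \mu = w$; now $f * g \in L^1$ as well, but by iterating we may as well arrange $f*g$ to decay fast. The point is that $\sum_{\lambda} (f*g)(x - \lambda) = w$ with $f * g$ nice enough that the sum is locally uniformly convergent, and then counting $\#(\Lam \cap [x,x+r))$ reduces to integrating this identity against $\1_{[x,x+r)}$ and controlling boundary effects by the bounded density of $\Lam$; the main term is $r \int(f*g) = r \int f$ scaled appropriately. Either way, the two substantive points are (i) the contradiction ruling out $\int f = 0$, and (ii) the Tauberian passage from a Fourier-side statement about the behaviour of $\ft\mu$ near $0$ to the uniform counting asymptotic for $\Lam$.

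The main obstacle I anticipate is step (ii): turning the distributional identity $\ft f \cdot \ft\mu = w\delta_0$ into a genuine, \emph{uniform}-in-$x$ counting statement. The difficulty is that $\ft f$ need not be bounded below near its zeros, so one cannot simply divide by $\ft f$; one must instead localise carefully near the origin and argue that the remaining mass of $\mu$ is equidistributed with density $0$, which is where the bounded-density hypothesis on $\Lam$ is essential (it guarantees $\mu$ is translation bounded, so that convolutions with fixed $L^1$ kernels are uniformly bounded functions, and that boundary contributions in the sandwich estimate are $O(1)$ uniformly). Ruling out $\int f = 0$, step (i), should be comparatively routine once the distributional framework is set up.
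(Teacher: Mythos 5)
The paper does not actually reprove this statement: it is quoted from \cite{KL96}, where Lemma~2.3 is established by a direct counting argument. One integrates the identity $\sum_{\lam}f(t-\lam)=w$ over $[x,x+r)$, applies Fubini (justified by the absolute convergence of the series), and splits the sum over $\lam$ into interior points (each contributing $\int f+O(\eps)$ once $\lam$ is at distance at least $T(\eps)$ from both endpoints), the $O(T)$ boundary points (each contributing at most $\|f\|_{L^1}$), and the far points, whose total contribution is small by the $L^1$ tail of $f$ combined with bounded density. This yields $wr=\#(\Lam\cap[x,x+r))\cdot\int f+o(r)$ uniformly in $x$, from which both conclusions follow at once: $\int f=0$ would force $w=0$, and otherwise one divides by $\int f$. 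Your closing ``alternative route'' is essentially this argument (the preliminary convolution with $g$ is unnecessary), and if fleshed out it gives a complete and elementary proof.

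Your primary, Fourier-analytic route has a genuine gap at its first step. For a general $f\in L^1(\R)$ the product $\ft{f}\cdot\ft{\delta}_\Lam$ is not defined: $\ft{f}$ is merely continuous, so $\ft{f}\cdot\varphi$ is not an admissible test function for the tempered distribution $\ft{\delta}_\Lam$, and your device of ``testing against a smooth bump and letting its support shrink'' cannot be carried out. This is exactly the difficulty the paper flags in \secref{secF1}: the identity \eqref{P2.2.12} is only a heuristic, and its rigorous surrogates --- the support inclusion \eqref{eqI3.1} of \thmref{thm4.5.1}, and the statement that $\ft{\delta}_\Lam=c\cdot\delta_0$ near the origin when $\int f\neq0$ (\thmref{thm5.1}) --- are proved via Wiener's Tauberian theorem, which is the missing ingredient in both your step (i) and the Fourier-side input to your step (ii). Beware also of circularity: \lemref{lem5.2}, which is precisely the implication you invoke in step (ii), is deduced in the paper from \thmref{thmKL1} itself. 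Your sandwiching argument does give an independent proof of that implication for the positive translation-bounded measure $\delta_\Lam$, but only after you rule out derivative-of-$\delta_0$ terms in $\ft{\delta}_\Lam$ near the origin (for instance by noting that $\delta_\Lam\ast\varphi$ is a bounded function for Schwartz $\varphi$ with $\supp(\ft{\varphi})$ small, hence cannot be a nonconstant polynomial). All of this machinery is avoidable: the integration argument in the first paragraph proves the theorem directly.
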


This was proved in 
\cite[Lemma 2.3]{KL96} for
a weaker notion of 
density\footnote{In \cite{KL96} the density
$d(\Lam)$ of a  set $\Lam \sbt \R$
is defined by
$d(\Lam) := \lim_{r \to +\infty}
\#(\Lambda\cap(-r,r)) / (2r)$.}, but
a minor adjustment to
the proof in fact yields the stronger
statement above
for the  uniform density.
A similar result is true also in $\R^d$.

\subsection{Tiling at level zero}
\label{secTLZ}
It is not known whether
\thmref{thmKL1} has an analog
for tilings of bounded density \emph{at level zero}.
It was conjectured in \cite[p.\ 660]{KL96} 
that if $f+\Lam$ is such a tiling,
then $f$ must have zero integral.
In \cite[Lemma 2.4]{KL96} this was proved 
under the extra assumption that $f$ has compact
support.
 Here we will prove that the conclusion
is true in the general case:

\begin{thm}
\label{thmA5}
Let $f+\Lam$ be a tiling at level zero,
where $f \in L^1(\R)$ and where
$\Lam \sbt \R$ is a nonempty set of bounded density.
Then $f$ must have zero integral.
\end{thm}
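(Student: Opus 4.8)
The plan is to argue by contradiction: suppose $f + \Lam$ tiles at level zero with $\Lam$ of bounded density but $\int f \neq 0$. After normalizing, we may assume $\int f = 1$. The idea is to compare the given level-zero tiling with an auxiliary level-$w$ tiling (for some $w \neq 0$) obtained by perturbing $\Lam$, and to derive a contradiction with the density formula of \thmref{thmKL1}.

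First I would introduce a truncation parameter. Fix a large $R > 0$ and split $\Lam = \Lam' \cup \Lam''$ where $\Lam'' = \Lam \cap [-R, R]$ is finite. Then $\sum_{\lam \in \Lam''} f(x-\lam)$ is an $L^1$ function $g_R$ with $\int g_R = \#\Lam''$, and by the tiling hypothesis $\sum_{\lam \in \Lam'} f(x-\lam) = -g_R(x)$ a.e., with absolute convergence a.e. Now I would pick an independent translation set: choose a number $t$ large enough that the ``tail'' part $\Lam'$ together with a shifted copy produces something controllable — more precisely, I would convolve (at the level of translation sets) with an arithmetic progression. Consider $\Gam = \Lam + N\mathbb{Z}$ for a carefully chosen period $N$; then $f + \Gam$ would be a tiling at level zero as well (summing the level-zero identity over the progression), but with $\Gam$ still of bounded density only if $N$ is chosen comparable to the density of $\Lam$ — and this is exactly the delicate point. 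A cleaner route: use the finite piece $g_R$ directly. Since $g_R \in L^1(\R)$ and $\int g_R = \#\Lam'' \neq 0$ for $R$ large (as $\Lam$ is nonempty and we can enlarge $R$), I would try to show that $g_R$ itself tiles at a nonzero level by some set of bounded density, contradicting nothing yet — so instead the argument must exploit the *cancellation* in the tail.

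The key analytic step, I expect, is a Fourier-side argument. Writing $\mu = \sum_{\lam \in \Lam} \del_\lam$, the tiling relation \eqref{eqI1.1} at level zero says $\ft f \cdot \ft\mu = 0$ as distributions (away from the origin, or in a suitable sense), while $\ft f(0) = \int f \neq 0$ forces $\ft\mu$ to vanish in a neighborhood of $0$, i.e.\ $\mu$ has a ``spectral gap'' at the origin. But $\mu$ is a positive measure of bounded density (a translation-bounded measure), and a positive measure cannot have $0 \notin \spec \mu$ unless $\mu = 0$ — indeed $\ft\mu(\{0\})$ would be the density, which is positive for a nonempty bounded-density set in the averaged sense, yet the spectral gap kills the mass near $0$. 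Making ``$\ft f \cdot \ft \mu = 0$'' rigorous for merely $L^1$ functions $f$ (so $\ft f$ is only continuous, not compactly supported) is where the compact-support hypothesis was used in \cite{KL96}, and removing it is the main obstacle: one cannot simply multiply distributions. I would handle this by a regularization — convolve $f$ with an approximate identity $\varphi_\eps$ whose Fourier transform is compactly supported and equals $1$ near $0$; then $f * \varphi_\eps + \Lam$ still tiles at level zero, $f*\varphi_\eps$ has the same integral, and now $\ft{f * \varphi_\eps}$ has compact support, reducing to the known compact-support case after checking that $f * \varphi_\eps \in L^1$ and that absolute convergence of the series is preserved (using bounded density of $\Lam$ and $f \in L^1$, via the standard estimate $\sum_{\lam} |F(x-\lam)| \in L^1_{loc}$ for $F \in L^1$ and $\Lam$ of bounded density). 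The remaining work is to verify these approximation steps carefully and to confirm that the compact-support conclusion ($\int (f*\varphi_\eps) = 0$) then yields $\int f = 0$, the desired contradiction.
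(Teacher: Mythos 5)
Your overall strategy is the right one, and it matches the paper's first proof in outline: deduce from the level-zero tiling that $\ft{\delta}_\Lambda$ vanishes in a neighborhood of the origin, then observe that a nonzero positive translation-bounded measure cannot have such a spectral gap (the paper does this by testing against a Schwartz $\varphi$ with $\supp(\varphi)\subset(-a,a)$ and $\ft{\varphi}>0$, which gives $0=\ft{\delta}_\Lambda(\varphi)=\sum_{\lambda\in\Lambda}\ft{\varphi}(\lambda)>0$; your phrasing via ``$\ft\mu(\{0\})$ is the density'' is shakier, since a bounded-density set need not have a density, but the conclusion is correct). The genuine gap is in the first half: you never actually establish the spectral gap. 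The paper obtains it from \thmref{thm5.1}, whose proof rests on Wiener's tauberian theorem, and you offer no substitute for that input. Your proposed regularization does not do the job for two reasons. First, convolving $f$ with an approximate identity whose Fourier transform has compact support makes $\ft{f\ast\varphi_\eps}$ compactly supported, not $f\ast\varphi_\eps$ itself; the compact-support case of \cite[Lemma 2.4]{KL96} that you want to invoke requires compact support of the function on the \emph{space} side (so that its Fourier transform is entire and its zero set is discrete), and $f\ast\varphi_\eps$ is in general supported on all of $\R$. Second, even granting that $\ft{f\ast\varphi_\eps}$ has compact support, it is only a continuous function (a Riemann--Lebesgue transform times a cutoff), so the product $\ft{f\ast\varphi_\eps}\cdot\ft{\delta}_\Lambda$ with the distribution $\ft{\delta}_\Lambda$ is still undefined, and the heuristic identity $\ft f\cdot\ft{\delta}_\Lambda=0$ remains exactly that --- a heuristic. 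The exploratory material about truncating $\Lambda$ and summing over arithmetic progressions, which you yourself abandon, does not contribute to closing this gap.

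For what it is worth, the paper also records a second proof that sidesteps Fourier analysis entirely and is closer in spirit to your (correct) observation that $f\ast\varphi_\eps+\Lambda$ is again a level-zero tiling: by the Ruzsa--Sz\'ekely theorem (\thmref{thmRS83}), if $\int f>0$ one can choose a nonnegative nonzero $g\in L^1(\R)$ with $f\ast g\geq 0$ a.e.; then $(f\ast g)+\Lambda$ is a level-zero tiling by a nonnegative nonzero function over a nonempty set, which is absurd. If you want to avoid Wiener's theorem, replacing your band-limited mollifier by such a $g$ is the move that actually closes the argument.
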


Do there exist tilings $f+\Lam$ at level zero 
such that the set $\Lam$ has density zero?
\thmref{thmKL1} does not exclude such
a possibility. In fact, in
 dimensions two and higher it is easy
to exhibit  tilings of this kind.
For instance, in $\R^2$ one may take
$f(x,y) = \varphi(x)\psi(y)$, $\Lam = \Gam \times \{0\}$,
where $\varphi, \psi \in L^1(\R)$ 
and $\varphi + \Gam$ is a tiling of $\R$ at level zero.

We will show, however, that this is \emph{not} the case
in dimension one. To state the result, we recall that a set
 $\Lam \sbt \R$ is said to be
 \emph{relatively dense} if there 
is $r>0$ such that any interval $[x,x+r)$
 contains at least one point from $\Lam$.
We will prove the following:

\begin{thm}
\label{thmA6}
Let $f+\Lam$ be a tiling  at level zero,
where $f \in L^1(\R)$ is  nonzero and 
 $\Lam \sbt \R$ is a nonempty set of bounded density.
Then $\Lam$ must be a  relatively dense set.
\end{thm}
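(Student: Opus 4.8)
The plan is to argue by contradiction: suppose $\Lam$ is not relatively dense, so there exist arbitrarily long open intervals $I_n = (a_n, b_n)$, with $b_n - a_n \to \infty$, that contain no point of $\Lam$. The strategy is to show that on such a gap the tiling relation $\sum_{\lam \in \Lam} f(x-\lam) = 0$ forces $f$ to be very small on average far from the origin, and then to push this to the conclusion that $f \equiv 0$, contradicting the hypothesis that $f$ is nonzero. First I would split $f = f \cdot \1_{[-T,T]} + f \cdot \1_{\R \setminus [-T,T]}$ for a large parameter $T$; by $L^1$-integrability the tail part $g_T := f \cdot \1_{\R\setminus[-T,T]}$ has $\|g_T\|_1$ as small as we wish. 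The main part $h_T := f \cdot \1_{[-T,T]}$ has compact support, and for $x$ in the central portion of the gap $I_n$ — say $x \in (a_n + T, b_n - T)$ — the only translates $h_T(x-\lam)$ that can be nonzero come from $\lam \in \Lam$ lying outside $I_n$, hence outside a neighbourhood of $x$ of radius growing with the gap length.

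The key quantitative step uses the bounded density of $\Lam$: write $N = \sup_x \#(\Lam \cap [x,x+1))$. Then $\int_{a_n+T}^{b_n-T} |\,\sum_{\lam} h_T(x-\lam)\,|\,dx \le \sum_{\lam \in \Lam} \int |h_T(x-\lam)| \1_{(a_n+T,\,b_n-T)}(x)\,dx$, and for each fixed $x$ in the gap the contributing $\lam$ are confined to $|x - \lam| \in [\,\dist(x, \partial I_n) - T,\ \infty)$; combined with the tiling identity $\sum_\lam h_T(x-\lam) = -\sum_\lam g_T(x-\lam)$, one gets that the average of $|\sum_\lam g_T(x-\lam)|$ over the gap is small. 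Since $\int_{a_n+T}^{b_n-T} \sum_\lam g_T(x-\lam)\,dx$ is, up to boundary terms, controlled by $N \cdot (b_n - a_n) \cdot \|g_T\|_1$ is the wrong direction — instead I would integrate the \emph{signed} quantity and exploit that $\int_{\R} \sum_\lam f(x-\lam)\,\varphi(x)\,dx$ can be compared, via a smooth bump $\varphi$ supported in the gap with $\int \varphi = 1$, to $\widehat{f}(0) = \int f$ times a density-counting term — this is essentially the mechanism behind \thmref{thmKL1} and \thmref{thmA5}. Concretely, testing the tiling relation against a normalized bump $\varphi_n$ adapted to the long gap and using that no $\lam$ lies in $I_n$, I expect to extract that $\int f$ times the "local density seen through $\varphi_n$" tends to a contribution forced to vanish; since by \thmref{thmA5} we already know $\int f = 0$, the real content must instead come from a second-order cancellation argument showing the whole of $f$ integrates to zero against enough test functions, i.e. $f = 0$.

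More precisely, the cleanest route is: fix the tail bound, choose the gap so long that $2T$ is negligible compared to $b_n - a_n$, and observe that for a.e.\ $x \in (a_n+T, b_n-T)$ we have $h_T(x - \lam) = f(x-\lam)\1_{[-T,T]}(x-\lam)$, which vanishes unless $x - \lam \in [-T,T]$, i.e.\ $\lam \in [x-T, x+T]$; but $[x-T,x+T] \subset I_n$ has no points of $\Lam$, so $\sum_\lam h_T(x-\lam) = 0$ identically on that sub-interval. Hence $\sum_\lam g_T(x-\lam) = 0$ there as well. Now $\int_{a_n+T}^{b_n-T}\big|\sum_\lam g_T(x-\lam)\big|\,dx = 0$, which is automatic and gives nothing — so the contradiction cannot be reached by this crude splitting alone. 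The honest approach is to instead run the argument of \thmref{thmKL1}/\thmref{thmA5} locally: the Fourier-analytic estimates there produce, for a tiling of bounded density, a comparison between the number of $\lam$ in a long interval and $w/\!\int f$; at level zero with $\int f = 0$ (known from \thmref{thmA5}), the same estimates show that the measure $\mu = \sum_\lam \delta_\lam$ cannot have arbitrarily long gaps without forcing $\widehat{f}$ to vanish on a neighbourhood of $0$, and then — via the fact that $\widehat{f}$ is continuous and $f \in L^1$ — forcing $f$ to actually be zero.

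The main obstacle, as the above makes clear, is that the naive localization trivializes; the real work is to set up the Fourier-side argument so that a single long gap in $\Lam$ (not a sequence of them covering a positive-density set) already yields a contradiction with $f \not\equiv 0$. I expect the decisive ingredient to be an uncertainty-type estimate: the Fourier transform of $\mu = \sum_{\lam\in\Lam}\delta_\lam$ restricted to a long interval free of atoms, together with the identity $\widehat{f}\cdot\widehat{\mu} = 0$ as a distributional relation (from \eqref{eqI1.1} at level zero), constrains $\widehat f$ to be supported where $\widehat\mu$ is "thin"; bounded density of $\Lam$ controls $\widehat\mu$ from below in an averaged sense off the gap, and one deduces $\widehat f \equiv 0$. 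Filling in this uncertainty estimate — making precise how a length-$L$ gap forces $\widehat{f}$ to be small on an interval of length $\sim 1/L$ around the origin and then propagating that to all of $\R$ using the tiling structure — is where the technical heart of the proof lies.
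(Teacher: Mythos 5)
Your proposal does not contain a proof: as you yourself observe, the naive splitting of $f$ into a compactly supported part and a small tail trivializes, and the ``uncertainty estimate'' you then defer to is precisely the content that needs to be supplied. Moreover, the shape you guess for it --- that a gap of length $L$ forces $\ft{f}$ to be small on an interval of length $\sim 1/L$ near the origin, and that one then shows $f\equiv 0$ --- is not how the argument goes. The hypothesis $f\neq 0$ enters only once, to guarantee via \thmref{thm4.5.1} that $\ft{\del}_\Lam$ vanishes on some fixed open interval $(a,b)$ (namely an interval on which the continuous function $\ft f$ is nonvanishing); the contradiction is then derived from properties of $\Lam$ alone, not of $f$. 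Theorem~\ref{thmA5} and the identity $\int f=0$ play no role.

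The missing idea is a compactness argument on the translation set. One chooses gaps $I_j=(\lam_j-r_j,\lam_j)$ with $r_j\to\infty$ and right endpoint $\lam_j\in\Lam$, sets $\Lam_j:=\Lam-\lam_j$, and extracts a vague limit $\mu$ of the uniformly translation-bounded measures $\del_{\Lam_j}$. This limit is a nonzero positive measure (it carries an atom of mass at least one at the origin) supported on $[0,+\infty)$, because $\del_{\Lam_j}$ vanishes on $(-r_j,0)$; and since each $\ft{\del}_{\Lam_j}$ vanishes on $(a,b)$ (translating $\Lam$ only modulates $\ft{\del}_\Lam$ and does not change its support), so does $\ft{\mu}$. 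A nonzero positive translation-bounded measure carried by a half-line whose Fourier transform vanishes on an open interval contradicts the uniqueness theorem for the Hardy space $H^1$ --- this is the one-sided uncertainty principle your sketch is groping for, but it is applied to the limit measure $\mu$, not to $f$. Without the passage to the limit along the gaps, a single long gap gives you no usable Fourier-side information, which is exactly the obstruction you ran into.
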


In particular this implies that  $\Lam$ cannot have density zero.

Does it follow from the assumptions in \thmref{thmA6}
that $\Lam$ has a uniform (positive) density $D(\Lam)$\,?
The answer to this question  is not known. 
The problem is nontrivial due to the existence
of  translation sets $\Lam$ 
that admit only tilings at level zero,
so that \thmref{thmKL1} does not apply to these sets:

\begin{thm}[{\cite{Lev20}}]
\label{thmLev20.5}
There exists a nonempty set
 $\Lam \sbt \R$ of bounded density
 which admits tilings $f+\Lam$ with nonzero
$f \in L^1(\R)$, but  any such a tiling
is necessarily a tiling at  level  zero.
\end{thm}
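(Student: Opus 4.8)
The plan is to construct $\Lam$ through a Fourier-analytic analysis of the tiling equation. Write $\mu=\sum_{\lambda\in\Lam}\delta_\lambda$; since $\Lam$ has bounded density, $\mu$ is a translation-bounded measure, hence a tempered distribution, and for $f\in L^1(\R)$ the tiling relation $\sum_{\lambda\in\Lam}f(x-\lambda)=w$ a.e.\ (which indeed converges absolutely a.e., again by bounded density) is equivalent to the identity
\[
\ft f\cdot\ft\mu=w\,\delta_0
\]
of tempered distributions, where $\ft f$ is the continuous, bounded Fourier transform of $f$. With this reformulation the theorem is reduced to exhibiting a set $\Lam\sbt\R$ of bounded density such that: (a) $\ft\mu$ vanishes on some open interval $(a,b)$ with $a>0$; and (b) $\Lam$ is \emph{not} uniformly distributed in the sense of \eqref{eqI10.2}.

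Granting such a $\Lam$, the two halves of the statement follow at once. From (a): choose any nonzero $f\in L^1(\R)$ --- for instance a Schwartz function --- with $\ft f$ supported in a compact subinterval of $(a,b)$; then $\supp(\ft\mu)$ is disjoint from that subinterval, so $\ft f\cdot\ft\mu=0$, so $f+\Lam$ is a tiling at level zero, and $\Lam$ admits nonzero tilings. From (b): if $f+\Lam$ were a tiling at a nonzero level $w$ then, by \thmref{thmKL1}, $\Lam$ would have to be uniformly distributed, contradicting (b); hence every tiling of $\Lam$ is necessarily at level zero. (It will also be automatic from the construction that $\Lam$ is relatively dense, as \thmref{thmA6} demands.)

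It remains to build $\Lam$ meeting (a) and (b) simultaneously, and this is where the real work --- and the main obstacle --- lies. The guiding observation is that a full translated arithmetic progression $\tfrac1m\Z+t$ has a Dirac comb whose Fourier transform is supported on $m\Z$, hence vanishes on any interval $(a,b)$ with $0<a<b<m$. One would therefore like to take $\Lam$ to be, roughly, a concatenation over a rapidly growing sequence of blocks $I_k\sbt\R$ of the progressions $(\tfrac1{m_k}\Z+t_k)\cap I_k$, with the integers $m_k$ chosen to vary --- say taking two distinct values, each larger than $b$, on alternate blocks --- so that the local density of $\Lam$ oscillates between two levels over longer and longer stretches, which gives (b) (and keeps $\Lam$ uniformly discrete and relatively dense). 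The difficulty is that cutting a progression off at the edges of a block spoils the exact cancellation underlying (a): the boundary terms introduce spectral content on $(a,b)$. The plan is to damp and finally annul these boundary contributions by inserting short transition regions between the blocks, by choosing the block endpoints and the translations $t_k$ with care, and by realizing $\Lam$ either as an explicit infinite concatenation together with a direct estimate of $\ft\mu$ on $(a,b)$, or as a weak-$*$ limit of finite configurations inside the (weak-$*$ compact) space of uniformly discrete translation-bounded measures with a fixed density bound, arranging at each finite stage that $\ft\mu$ is as small as required on $(a,b)$ and that these errors are pushed out to infinity fast enough to vanish in the limit. Quantifying the boundary effects and showing that they can be made to cancel exactly, so that $\ft\mu\equiv0$ on $(a,b)$, is the crux; once this is in place, verifying (b), bounded density, and relative density is routine bookkeeping.
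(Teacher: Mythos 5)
Your soft reduction is half right. Given a nonempty $\Lam$ of bounded density whose transform $\ft{\del}_\Lam$ vanishes on an interval $(a,b)$ with $0<a<b$, taking $f$ Schwartz with $\ft{f}$ a nonzero bump compactly supported in $(a,b)$ does produce a nonzero tiling at level zero (your claimed ``equivalence'' with $\ft{f}\cdot\ft{\del}_\Lam=w\,\del_0$ is false for general $f\in L^1$, since the product is undefined, but you only use it for Schwartz $f$, where it is legitimate). The decisive problem is your condition (b). A set of bounded density with $\ft{\del}_\Lam$ vanishing on an open interval but \emph{not} uniformly distributed is precisely the object whose existence or non-existence is posed as an open problem in \secref{secOP1} of this paper: nobody knows whether such a set exists, and \thmref{thmA6} already forces any such $\Lam$ to be relatively dense, which constrains the density oscillation you want to engineer. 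So the ``real work'' you defer is not bookkeeping about boundary terms; it amounts to resolving an open question in the negative. Moreover, the concatenation sketch itself gives no mechanism for exact cancellation: on $(a,b)$ the boundary contributions of the truncated progressions form sums of the type $\sum_k c_k(t)\,e^{-2\pi i x_k t}$ over the sparse set of cut points $x_k$, and neither the ``transition regions'' nor the weak-$*$ limit device is shown to annul them.

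The theorem does not require (b). It suffices to produce $\Lam$ of bounded density such that $\ft{\del}_\Lam$ vanishes on some interval $(a,b)$, $0<a<b$, while $\ft{\del}_\Lam$ is \emph{not} of the form $c\cdot\del_0$ in any neighborhood of the origin. Indeed, by \thmref{thm5.1} every tiling $f+\Lam$ then has $\int f=0$, and \thmref{thmKL1} (a tiling at nonzero level forces $\int f\neq 0$) shows the level must be zero; existence of a level-zero tiling follows from the vanishing on $(a,b)$ as above. This weaker spectral defect is perfectly compatible with $\Lam$ having a uniform density, and it is what is actually achieved in \cite{Lev20}: the set there is obtained by a Kargaev-type perturbation of the integers, with the perturbation solving a nonlinear equation via a contraction-mapping argument of exactly the kind carried out in \secref{secUB1}, which is how one gets \emph{exact} vanishing of $\ft{\del}_\Lam$ on an interval. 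You should replace (b) by this weaker condition and replace the block-concatenation scheme by such a fixed-point perturbation.
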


\subsection{Periodic tilings}
We say that   a set $\Lam \sbt \R$ has a \emph{periodic
structure} if it can be represented as 
a disjoint union of finitely
many  arithmetic progressions, namely
\begin{equation}
\label{eqI2.1}
\Lam = \biguplus_{j=1}^{N}
(a_j \Z + b_j)
\end{equation}
where $a_j, b_j$ are real numbers and
$a_j>0$.
The sets $\Lam$  with this structure constitute
 the basic examples of translation sets for
tilings of $\R$. Indeed, one can check  that if
\begin{equation}
f = \1_{[0,a_1]} \ast
\1_{[0,a_2]} \ast \dots \ast
\1_{[0,a_N]}
\end{equation}
and $\Lam$ is given by \eqref{eqI2.1},
then $f + \Lam$ is a tiling at some positive level $w$.

The last example shows that any set
$\Lam$  of the form \eqref{eqI2.1}
admits a tiling by a \emph{compactly supported}
function $f \in L^1(\R)$. A result first proved in
\cite{LM91} and rediscovered in \cite{KL96}
asserts that there are no other translation 
sets $\Lam$ with this property:

\begin{thm}[{\cite{LM91}, \cite{KL96}}]
\label{thmLM91}
Let $f \in L^1(\R)$ be nonzero and have
 compact support.
If $f$ tiles at some level
$w$ with a translation set $\Lambda$ 
of bounded density, then
$\Lambda$ has a periodic structure,
namely,  it  must be of the form \eqref{eqI2.1}.
\end{thm}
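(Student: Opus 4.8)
The plan is to pass to the Fourier side and exploit that the Fourier transform of a compactly supported function is entire of exponential type, hence has a discrete zero set. Since $\Lam$ has bounded density, the measure $\mu:=\sum_{\lam\in\Lam}\delta_\lam$ is translation bounded, hence a tempered distribution, and the tiling relation \eqref{eqI1.1} is precisely $\mu\ast f=w$. Taking Fourier transforms yields $\ft\mu\,\ft f=w\,\delta_0$ in $\mathcal S'(\R)$. By the Paley--Wiener theorem $\ft f$ extends to an entire function of exponential type, not identically zero because $f\neq0$; hence its real zero set $\zfft$ is discrete and closed in $\R$. On $\R\setminus(\{0\}\cup\zfft)$ we have $\ft f\neq0$ and $\delta_0=0$, so dividing by $\ft f$ forces $\ft\mu=0$ there; consequently $\supp\ft\mu\subseteq\{0\}\cup\zfft$, a \emph{discrete} closed set. (If $w\neq0$ then $\ft f(0)=\int f\neq0$ by \thmref{thmKL1}; if $w=0$ the point $0$ may itself lie in $\zfft$, but this changes nothing in what follows.)

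Next, a tempered distribution supported on a discrete closed set is, in each bounded region, a finite linear combination of point masses and their derivatives. Applying the inverse Fourier transform, $\mu$ equals, in $\mathcal S'(\R)$, a generalized trigonometric sum
\begin{equation*}
\mu=\sum_{s\in S}P_s(x)\,e^{2\pi i s x},\qquad S:=\supp\ft\mu\subseteq\{0\}\cup\zfft ,
\end{equation*}
where $S$ is discrete and closed and each $P_s$ is a polynomial. Since $\mu$ is moreover a \emph{nonnegative, translation-bounded} measure, a rescaling (mean-value) argument rules out polynomial growth: after modulating one frequency to the origin and then dilating, a term with $\deg P_s\ge1$ would produce a weak limit that is not translation bounded, contradicting the uniform bound on $\mu$. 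Hence every $P_s$ is a constant $c_s$, so $\ft\mu=\sum_{s\in S}c_s\,\delta_s$ is a pure point measure supported on the discrete set $S$, and $\mu=\sum_{s\in S}c_s\,e^{2\pi i s x}$ is an almost periodic measure.

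The heart of the proof is to show that $S$ lies in a single lattice $\tfrac1L\Z$ for some $L>0$; once this is known, the series for $\mu$ shows that $\mu$ is $L$-periodic, i.e.\ $\Lam+L=\Lam$. This passage from a ``discrete spectrum'' to a ``spectrum inside one lattice'' is the genuinely rigid part of the statement, and it is where the compact support of $f$ is used in an essential, non-formal way: it is an instance of the principle that a sum of unit point masses, supported on a set of bounded density and having Fourier transform supported on a discrete set, must be periodic. One way to establish it is to invoke --- or reprove in the present setting --- a rigidity theorem of C\'ordoba type for the Dirac comb $\ft\mu=\sum_{s\in S}c_s\delta_s$, which is available because all atoms of $\mu$ equal $1$. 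A Fourier-free alternative, following \cite{LM91}, is to write $\supp f\subseteq[0,M]$, observe that the finite blocks $\Lam\cap[nM,(n+1)M)$ obey a recursion of finite type, and then show --- again exploiting compact support --- that only finitely many distinct blocks can occur, so that these blocks are periodic in $n$ and hence $\Lam+L=\Lam$ for some $L$.

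Finally, a set $\Lam$ of bounded density with $\Lam+L=\Lam$ meets $[0,L)$ in a finite set $\{b_1,\dots,b_N\}$, whence $\Lam=\biguplus_{j=1}^{N}(L\Z+b_j)$, which is of the form \eqref{eqI2.1}. The one real obstacle is the lattice step of the preceding paragraph.
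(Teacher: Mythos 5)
The Fourier-analytic first half of your argument is essentially sound and consistent with the paper's framework: since $f$ has compact support, $\ft{f}$ extends to a nonzero entire function of exponential type, so $\{\ft f=0\}$ is a closed discrete subset of $\R$ (of at most linear growth, by Jensen's formula), and \thmref{thm4.5.1} places $\supp \ft{\delta}_\Lambda\setminus\{0\}$ inside it; the order-zero claim (no derivatives of $\delta_s$ can occur because $\delta_\Lambda$ is a translation-bounded measure) is also correct, though your justification of it is only a sketch. The fatal problem is the step you yourself call the heart of the proof. You aim to show that $S=\supp\ft{\delta}_\Lambda$ lies in a single lattice $\tfrac1L\Z$, hence that $\Lambda+L=\Lambda$. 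That conclusion is strictly stronger than the theorem and is \emph{false} under its hypotheses, because the form \eqref{eqI2.1} permits progressions with incommensurable moduli. Concretely, take $f=\1_{[0,1]}\ast\1_{[0,\sqrt2]}$ and $\Lambda=\Z\uplus(\sqrt2\,\Z+\tfrac12)$ --- the paper's own example following \eqref{eqI2.1}. Then $f$ is nonzero with compact support, $\Lambda$ has bounded density, and $f+\Lambda$ tiles at level $1+\sqrt2$; yet $\Lambda$ is not periodic, $\supp\ft{\delta}_\Lambda=\Z\cup\tfrac1{\sqrt2}\Z$ is contained in no lattice, the ``C\'ordoba-type'' principle you invoke (unit masses, bounded density, discrete spectrum $\Rightarrow$ periodic) fails for $\delta_\Lambda$ itself, and the blocks $\Lambda\cap[nM,(n+1)M)-nM$ take infinitely many distinct values, so the ``finitely many blocks'' argument you attribute to \cite{LM91} cannot work either (compare \thmref{thmKL16FLC}: finite local complexity is an \emph{extra hypothesis} there, not a consequence of tiling by a compactly supported function).

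The actual proofs in \cite{LM91} and \cite{KL96} close the argument with a tool whose output is ``finite union of arithmetic progressions'' rather than ``periodic'': Cohen's characterization of idempotent measures, applied on the Bohr compactification of $\R$ (this is exactly what the paper says in the remark after the theorem). Roughly, the linear bound on $\#(\{\ft f=0\}\cap(-r,r))$ coming from the uncertainty-principle fact quoted before \thmref{thmB2} controls the spectrum of $\delta_\Lambda$, one passes to an idempotent-type measure on the Bohr group, and Cohen's theorem places $\Lambda$ in the coset ring; the discrete, bounded-density members of that ring are precisely the sets \eqref{eqI2.1}. So your steps 1--3 can be kept, but the lattice/periodicity step must be replaced by a coset-ring argument of this kind; as written, the proposal proves (or rather, attempts to prove) a false statement.
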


The proof of this result is based on Cohen's
characterization of  idempotent measures in locally 
compact abelian groups. 
The group on which Cohen's theorem is used in the
proof  is  the \emph{Bohr compactification} of the real line
(see also \cite[p.\ 25]{Mey70}).

 A discrete set $\Lambda\subset\mathbb R$ is said to have 
\emph{finite local complexity} if $\Lambda$ can be enumerated as a sequence
 $\{\lambda_n\}$, $n\in\Z$, such that $\lambda_n<\lambda_{n+1}$ and the successive differences $\lambda_{n+1}-\lambda_n$ take only finitely many different values. The following result establishes that tilings
of finite local complexity
must be periodic, even if $f$ does not have
 compact support:

\begin{thm}[{\cite{IK13}, \cite{KL16}}]
\label{thmKL16FLC}
Let $\Lambda \sbt \R$ have finite local complexity.
If $f \in L^1(\R)$ is nonzero 
and $f+\Lambda$ is a tiling at some level $w$, then $\Lambda$ 
must be a periodic set, namely, it has the form $\Lam = a\Z + \{b_1, \dots, b_N\}$.
\end{thm}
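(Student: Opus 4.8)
The plan is to transfer the tiling condition to the Fourier side, where it becomes a statement about the support of a positive measure, and then to extract periodicity from the rigidity imposed by finite local complexity. Concretely, I would first rewrite the hypothesis as $f * \mu_\Lam = w$, where $\mu_\Lam = \sum_n \del_{\lam_n}$ is the Dirac comb of $\Lam$. A set of finite local complexity has bounded density, so if $w \ne 0$ then \thmref{thmKL1} gives $\Lam$ a positive uniform density $D(\Lam)$, and if $w = 0$ then \thmref{thmA6} gives that $\Lam$ is relatively dense; in either case $D(\Lam) > 0$. Consequently the autocorrelation measure $\gam := \lim_{T} \tfrac1{2T}\, \sigma_T * \widetilde{\sigma_T}$, where $\sigma_T := \mu_\Lam|_{[-T,T]}$ and $\widetilde\rho$ denotes the reflection of a measure $\rho$, exists along a suitable subsequence of $T$'s (by vague compactness). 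Here finite local complexity enters crucially: it guarantees that $\Lam - \Lam$ is locally finite, so that $\gam = \sum_{t \in \Lam - \Lam} \eta(t)\, \del_t$ is an honest measure, with $0 \le \eta(t) \le D(\Lam)$. Convolving the tiling identity with its reflection and normalizing, a routine computation shows that $h * \gam$ is the constant $|w|^2$, where $h := f * \widetilde f \in L^1(\R)$ and $\widetilde f(x) := \overline{f(-x)}$. Passing to Fourier transforms, $\ft h\, \ft\gam = |w|^2\, \del_0$ with $\ft h = |\ft f|^2 \ge 0$; since also $\ft\gam \ge 0$, the measure $\ft\gam$ is concentrated on $\{0\} \cup \zfft$. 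Because $f \ne 0$ the continuous function $\ft f$ does not vanish identically, so $\supp \ft\gam$ is a proper closed subset of $\R$ — in particular $\ft\gam$ vanishes on some nonempty open interval disjoint from the origin — while $\ft\gam(\{0\}) = D(\Lam)^2 > 0$.

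The heart of the matter — and the step I expect to be the main obstacle — is now a statement about $\Lam$ alone: a set of finite local complexity whose diffraction $\ft\gam$ is not supported on all of $\R$ must be periodic. (The tiling and the function $f$ have done their job in producing the spectral gap above; it is worth noting that the usual aperiodic examples of sets of finite local complexity — model sets such as the Fibonacci set, and substitution point sets — all have $\supp \ft\gam = \R$ and are therefore already excluded by the first paragraph.) To prove this one argues that the spectral gap forces $\ft\gam$ to be pure point with Bragg spectrum a discrete subgroup $\alpha\Z$ of $\R$: the atoms of $\ft\gam$ carry an almost periodic component of $\gam$, and in order for $\gam$ to remain a measure supported on the \emph{locally finite} set $\Lam - \Lam$ the corresponding frequencies must lie in a common lattice — this is where rigidity results for almost periodic (crystalline) measures on the line are used — while the spectral gap rules out a nontrivial continuous part of $\ft\gam$. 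A more hands-on line, closer to the proof of \thmref{thmLM91}, runs through the compact hull $X_\Lam := \overline{\{\Lam - t : t \in \R\}}$, to every member of which the tiling identity extends: if $\Lam$ is not periodic and $X_\Lam$ contains a proximal pair — two distinct elements agreeing on a half-line — then subtracting their tiling identities yields $\ft f\, \ft\nu = 0$ for a nonzero $\pm1$-valued measure $\nu$ of bounded density supported on a half-line, which is impossible, since the Fourier transform of such a $\nu$ is the boundary value of a holomorphic function on a half-plane and so cannot vanish on an interval without vanishing identically; the remaining (distal) case is covered by the measure-rigidity argument.

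Finally, once $\ft\gam$ is known to be pure point and supported on the discrete group $\alpha\Z$, the conclusion follows by standard facts: pure point diffraction with Bragg spectrum $\alpha\Z$ means that the dynamical system $(X_\Lam, \R)$ has pure point spectrum equal to $\alpha\Z$, hence is isomorphic to the translation flow on the circle $\R/\tfrac1\alpha\Z$; therefore the orbit $\{\Lam - t : t \in \R\}$ is already closed, $\Lam + \tfrac1\alpha = \Lam$, and $\Lam = a\Z + \{b_1, \dots, b_N\}$ with $a = \tfrac1\alpha$ and $\{b_1, \dots, b_N\} = \Lam \cap [0,a)$, which is finite because $\Lam$ has bounded density.
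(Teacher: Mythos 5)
First, a point of reference: the paper does not prove this theorem --- it is quoted from \cite{IK13} and \cite{KL16} --- so the comparison below is with the argument in those references, which your ``more hands-on line'' approximates, rather than with an in-paper proof. Your first paragraph is correct in substance but detours unnecessarily through the autocorrelation: \thmref{thm4.5.1} already gives $\supp(\ft{\delta}_\Lam)\setminus\{0\}\subset\{\ft f=0\}$, hence a spectral gap for $\ft{\delta}_\Lam$ itself (and, after passing to limits, for every element of the hull), which is the object you actually need when you form $\nu=\delta_{\Lambda_1}-\delta_{\Lambda_2}$.

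The genuine gap sits exactly where you flag it, and it is not closed by either of your two sketches. Route (a) --- ``rigidity results for almost periodic (crystalline) measures'' forcing the Bragg frequencies into a lattice, plus the claim that the gap kills the continuous part of $\ft\gamma$ --- is not a proof, and no such rigidity theorem exists in the generality you invoke: \thmref{thmI1} of this very paper (Kurasov--Sarnak) produces a \emph{non-periodic} $\Lam$ of bounded density with $\ft{\delta}_\Lam$ a pure point measure on a discrete closed set, so ``discrete, gappy Fourier transform implies periodic'' is false unless finite local complexity enters in a precise combinatorial way, which your sketch never supplies. Route (b) is the right mechanism --- it is the same half-line argument as the paper's own proof of \thmref{thmA6}: a nonzero translation-bounded measure supported on a half-line cannot have a Fourier transform vanishing on an open interval, by $H^1$ uniqueness after smoothing --- but you leave it conditional on the existence of an asymptotic pair and ship the ``distal case'' back to the non-argument (a). The missing ingredient is elementary and is where finite local complexity does its work: encode $\Lam$ by its bi-infinite gap sequence over the finite alphabet of gap values, so the hull becomes a subshift, and use the classical fact that every \emph{infinite} subshift contains two distinct points agreeing on a half-line (if the word-complexity satisfies $p(n+1)=p(n)$ for some $n$ the subshift is finite by Morse--Hedlund; otherwise right-special words of every length exist and a K\"{o}nig's lemma argument produces the pair). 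Since the tiling identity passes to the hull, the half-line rigidity says no two distinct hull elements can agree on a half-line; hence the hull is finite, the gap sequence is periodic, $\Lam+a=\Lam$ for some $a>0$, and $\Lam=a\Z+\{b_1,\dots,b_N\}$ by bounded density. There is no distal case. This also renders your third paragraph superfluous --- which is just as well, since Halmos--von Neumann gives only a measure-theoretic isomorphism with a circle rotation and cannot by itself deliver the exact identity $\Lam+1/\alpha=\Lam$.
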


\subsection{Non-periodic tilings}
The papers \cite{LM91}, \cite{KL96}
leave the following question open: Does
there exist any set $\Lambda \sbt \R$ 
\emph{not} of periodic structure,
which can tile with some
 function $f\in L^1(\R)$ of unbounded support?
Such a set $\Lam$ cannot have finite local complexity
by \thmref{thmKL16FLC}.
We settled this question affirmatively in \cite{KL16}:

\begin{thm}[{\cite{KL16}}]
\label{thmKL16}
There exists a tiling $f+\Lambda$ at level one,
where $f\in L^1(\R)$ and 
$\Lambda \sbt \R$ has bounded density,
but such that  $\Lam$ 
has no periodic structure, 
i.e.\ the set $\Lam$ 
 is not of the form \eqref{eqI2.1}.
\end{thm}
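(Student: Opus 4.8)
\emph{Sketch.} The plan is to pass to the Fourier side. As $\Lam$ has bounded density, $\delta_\Lam := \sum_{\lambda\in\Lam}\delta_\lambda$ is a translation-bounded measure, hence a tempered distribution, and the tiling identity $f\ast\delta_\Lam \equiv w$ is equivalent to
\begin{equation}
\label{eqFourierTiling}
\ft f \cdot \widehat{\delta_\Lam} = w\,\delta_0 .
\end{equation}
Read in reverse, \eqref{eqFourierTiling} shows it is enough to build a set $\Lam\sbt\R$ of bounded density that is \emph{not} of the form \eqref{eqI2.1}, together with $f\in L^1(\R)$ with $\int f\neq0$, such that $\supp(\widehat{\delta_\Lam})\sbt\{0\}\cup Z(\ft f\,)$ and $\widehat{\delta_\Lam}(\{0\})\cdot\ft f(0)=w$. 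The function $f$ can be chosen last: for any closed set $K\sbt\R$ that avoids a neighbourhood of the origin there is an $f\in L^1(\R)$ with $\ft f$ smooth, $\ft f(0)=1$, and $\ft f$ vanishing on $K$ (take $\ft f\ge0$, smooth, and integrable together with its derivatives, vanishing precisely on $K$). So the task reduces to producing a non-periodic $\Lam$ of bounded density whose Fourier transform, away from the origin, is supported on a set that stays away from a fixed neighbourhood of $0$.

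The construction of $\Lam$ is the heart of the matter; I would carry it out by an iterative multi-scale procedure. Start from the level-one tiling $g+\Z$ with $\ft g(\xi)=\max(1-|\xi|,0)$ the triangle function: here $\widehat{\delta_\Z}=\delta_\Z$ coincides with $\delta_0$ on $(-1,1)=\R\setminus Z(\ft g\,)$, which is exactly the relation that \eqref{eqFourierTiling} demands on $(-1,1)$. Now fix a rapidly growing sequence of scales $T_1\ll T_2\ll\cdots$. At step $k$ one replaces the current set $\Lam_{k-1}$, inside an infinite sparse family of windows of length of order $T_k$ spread along the whole line, by a shifted or rescaled periodic block. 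The templates are engineered so that the perturbation $\delta_{\Lam_k}-\delta_{\Lam_{k-1}}$ has Fourier transform supported, within $(-1,1)$, at $0$ alone, up to an error distribution on $(-1,1)\setminus\{0\}$ which is cancelled by the step $k+1$ modification, so that the Fourier-side errors telescope to zero. Keeping the total density of modified sites summable over $k$ ensures that $\Lam=\lim_k\Lam_k$ still has bounded density, close to that of $\Z$. At each scale there remains an \emph{independent} binary choice — activate a given window or not — so the recipe outputs a whole family of tilings with the same $f$, indexed by a point of $\prod_k\{0,1\}$.

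The main obstacle I anticipate is the interaction between scales. Since $f$ cannot be compactly supported — that is forbidden by \thmref{thmLM91} — modifications in far-apart windows do not act independently on the tiling equation: the tails of $f$ overlap, and a naive ``perturb a periodic tiling at a sparse set of positions'' argument fails outright, because a nonzero \emph{finite} perturbation of $\delta_\Z$ has analytic Fourier transform and therefore cannot vanish on the interval $(-1,1)\setminus\{0\}$. Making the perturbations infinite and hierarchical is what replaces analyticity by the telescoping cancellation above, and the real work is to choose the scales $T_k$, the window density, and the templates so that the errors on $(-1,1)\setminus\{0\}$ sum to zero, \eqref{eqFourierTiling} holds exactly in the limit, and the bounded-density bound survives. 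One must finally check that a \emph{generic} choice in $\prod_k\{0,1\}$ yields a set not of the form \eqref{eqI2.1}: this cannot be settled by cardinality, since there are already continuum-many such sets, so the activations must be arranged so that $\Lam$ exhibits, at arbitrarily large scales, mutually incompatible local patterns, which is impossible for a finite union of arithmetic progressions (whose local structure, commensurate periods or not, is governed by finitely many progressions and cannot realise an arbitrarily prescribed sequence of defects).
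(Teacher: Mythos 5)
There are two genuine gaps. First, your reduction is not sound as stated: for a general $f\in L^1(\R)$ the product $\ft f\cdot\ft{\delta}_\Lam$ is not well defined, and the condition that $\ft f$ merely \emph{vanishes on} $\supp(\ft{\delta}_\Lam)\setminus\{0\}$ does \emph{not} imply that $f+\Lam$ tiles --- this is precisely the content of \thmref{thmI8.10}, which exhibits a bounded-density $\Lam$ and an $f$ satisfying \eqref{eqI3.1} with no tiling at any level (the obstruction is failure of spectral synthesis). To make the converse direction work you must either arrange that $\ft{\delta}_\Lam$ is a locally finite measure (so that \thmref{thm4.5.2} applies), or take $\ft f$ to be a Schwartz function \emph{compactly supported} inside an interval $(-a,a)$ on which $\ft{\delta}_\Lam=c\,\delta_0$; your choice of an $\ft f\ge 0$ ``vanishing precisely on $K$'' does neither.

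Second, and more seriously, the heart of the theorem --- producing a non-periodic set $\Lam$ of bounded density with $\ft{\delta}_\Lam=c\,\delta_0$ on a neighbourhood of the origin --- is not actually constructed. Your multi-scale ``telescoping cancellation'' is a description of what a construction would have to achieve, not a mechanism for achieving it: you give no reason why the Fourier-side errors on $(-1,1)\setminus\{0\}$ produced at step $k$ can be cancelled at step $k+1$ while keeping the perturbation summable, and you yourself flag this as ``the real work.'' The actual proof in the cited source proceeds quite differently: one perturbs the integers to $\Lam=\{n+\alpha_n\}$, writes the requirement that $\ft{\delta}_\Lam-\ft{\delta}_\Z$ vanish near the origin as a nonlinear equation $\alpha+\F(R\alpha)=\beta$ for the bounded sequence $\alpha=\{\alpha_n\}$ in $\ell^\infty$, and solves it by the Banach fixed point theorem (Kargaev's implicit function method); this is exactly the scheme reproduced in \secref{secUB1} of the present paper for \thmref{thmR7.43}, via \lemref{lemR4.2}. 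Choosing the data $\beta$ non-periodically then forces $\Lam$ to have infinitely many distinct gaps, hence not to be of the form \eqref{eqI2.1}. Alternatively, the stronger \thmref{thmA1} is proved here by an entirely different route (Kurasov--Sarnak crystalline measures plus the interpolation \thmref{thmB2}). As it stands, your argument establishes neither the existence of the required $\Lam$ nor, given such a $\Lam$, the tiling itself.
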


The proof was based on the implicit function method 
due to Kargaev \cite{Kar82}, and it yields a set
$\Lam$ which is a small perturbation of the integers.
The proof moreover allows to choose the function $f$ 
in the Schwartz class. However it
yields a function $f$ satisfying
$\supp(f) = \R$, where 
$\supp(f)$ is the closed support of $f$
(the smallest closed set such that $f$ vanishes
a.e.\ on its complement).

In this paper we will prove a stronger version
of Theorem \ref{thmKL16}, which establishes the existence
of non-periodic tilings $f+\Lam$ of bounded density,
such that $f$ has ``sparse'' support.
Precisely, we will show that the
support (which must be unbounded, due to
\thmref{thmLM91}) can be localized 
inside  any given set $\Om \sbt \R$
which contains arbitrarily long intervals:

\begin{thm}
  \label{thmA1}
There is a discrete set $\Lambda \sbt \R$ of bounded density,
 but which is not of the form \eqref{eqI2.1}, with the following
property: given any  scalar $w$, and any set
$\Om \sbt \R$ which contains arbitrarily long intervals,
one can find  a nonzero 
$f \in L^1(\mathbb R)$, $\supp(f) \sbt \Om$,
such that $f+\Lambda$ is a tiling at level $w$.
\end{thm}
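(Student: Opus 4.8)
The plan is to build on the construction behind \thmref{thmKL16}, which already produces a non-periodic set $\Lam \sbt \R$ of bounded density admitting a tiling by a nonzero Schwartz function. The key new demand is that the support of the tiling function be confined to a prescribed set $\Om$ containing arbitrarily long intervals, with the \emph{same} $\Lam$ working for every such $\Om$ and every level $w$. First I would recall that a tiling $f+\Lam$ at level $w$ is equivalent, on the Fourier side, to the condition $\ft f \cdot \ft{\delta_\Lam} = w\,\delta_0$ (interpreted appropriately), so that $\ft f$ must vanish on $\spec(\Lam) := \zft{\Lam}\setminus\{0\}$ and $\ft f(0) = w / D(\Lam)$. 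Thus the task reduces to: given that $\Lam$ is fixed with this non-periodic structure, find for each $\Om$ a nonzero $f \in L^1$ with $\supp(f)\sbt\Om$ whose Fourier transform vanishes on a prescribed closed set (the spectrum of $\Lam$) and takes a prescribed value at the origin. This is an interpolation/construction problem for band-structured functions with prescribed spatial support.

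The central mechanism I would use is a \emph{superposition over the long intervals in $\Om$}. Since $\Om$ contains intervals $I_k = [c_k, c_k + \ell_k)$ with $\ell_k \to \infty$, and since on each such interval one has a lot of room, the idea is to take $f = \sum_k a_k g_k$ where each $g_k$ is supported in $I_k$, is a smooth bump-type function whose Fourier transform is small (in a quantitative, summable sense) off a neighborhood of $0$ and on the relevant frequencies, and the coefficients $a_k$ and a mild perturbation within each $I_k$ are chosen so that the required vanishing conditions on $\spec(\Lam)$ and the normalization at $0$ are met in the limit. Concretely, a single long interval of length $\ell$ supports functions whose Fourier transform is concentrated in a window of width $\sim 1/\ell$ about each prescribed frequency; by making $\ell_k$ grow one can force $\ft{g_k}$ to be as flat as desired near $0$ and to have as much cancellation as desired at the (fixed, discrete, bounded-density) spectrum of $\Lam$. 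A Neumann-series / fixed-point argument (as in Kargaev's implicit function method, already invoked for \thmref{thmKL16}) then corrects the residual error: one sets up the map $f \mapsto \sum_{\lambda} f(\cdot - \lambda) - w$ on an appropriate Banach space of functions supported in $\Om$, shows its derivative at the approximate solution is invertible, and solves. The non-periodicity of $\Lam$ is inherited from the \thmref{thmKL16} construction and is unaffected by the localization of $f$, since $\Lam$ is chosen first and independently of $\Om$ and $w$.

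The main obstacle, I expect, is reconciling two competing requirements: the function $f$ must have \emph{rapidly decaying (or at least $L^1$-controlled) tails} of its Fourier transform away from the tiling-relevant frequencies — so that the defect operator is a genuine contraction — while simultaneously its \emph{spatial support is restricted to $\Om$}, which may be a very ``thin'' set (the complement could have arbitrarily large measure between the long intervals). One cannot simply take a Schwartz function and cut it off, since truncation destroys the delicate Fourier vanishing. The resolution is to never use a single function but rather the infinite superposition above, exploiting that longer and longer intervals give better and better building blocks; the quantitative heart of the argument is a lemma producing, for each $\eps>0$, a function supported in an interval of length $L(\eps)$ whose Fourier transform is within $\eps$ (in the appropriate norm) of the ideal profile on a neighborhood of the spectrum and at $0$. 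Making the error estimates summable across $k$, and verifying that the perturbation argument stays inside the class $\supp(f)\sbt\Om$ (which it does, being a fixed-point in that very space), is the technical crux; once it is in place, the conclusion that $f+\Lam$ tiles at level $w$ with $\supp(f)\sbt\Om$ and $\Lam$ non-periodic follows immediately.
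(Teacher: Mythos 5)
Your interpolation mechanism --- superposing modulated bumps on the long intervals of $\Om$ and correcting by a Neumann-series / near-identity argument --- is essentially the right tool, and it is exactly how the paper proves its interpolation lemma (\thmref{thmB2}). But the proposal has a genuine gap in the choice of the translation set, and this gap cannot be repaired by the fixed-point correction you describe. You take $\Lam$ to be the Kargaev-type perturbation of $\Z$ from \thmref{thmKL16} and reduce the problem to making $\ft{f}$ vanish on $\supp(\ft{\del}_\Lam)\setminus\{0\}$ with $\ft{f}(0)$ prescribed. Two things go wrong. First, for that $\Lam$ nothing is known about $\supp(\ft{\del}_\Lam)$ away from a neighbourhood of the origin; it need not be a discrete set, let alone one of tempered growth. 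Your building blocks $g_k$ supported on intervals $I_k\sbt\Om$ have Fourier transforms concentrated near finitely many frequencies with small but nonzero tails, so they can only enforce vanishing of $\ft{f}$ on a \emph{discrete} set with summable interaction coefficients; if $\supp(\ft{\del}_\Lam)\setminus\{0\}$ contains intervals or is otherwise thick, the interpolation scheme does not get off the ground. Second, even if the vanishing could be arranged, condition \eqref{eqI3.1} is only \emph{necessary} for tiling; \thmref{thmI8.10} shows it is not sufficient in general, and sufficiency is guaranteed only when $\ft{\del}_\Lam$ is a locally finite measure (\thmref{thm4.5.2}). Your proposed remedy --- a fixed point for $f\mapsto\sum_{\lam}f(\cdot-\lam)-w$ with invertible derivative --- cannot work: the linearization is convolution with $\del_\Lam$, which has an enormous kernel (that is precisely the nonuniqueness of tilings) and is not invertible on any space of functions supported in $\Om$.

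The missing idea is to choose $\Lam$ so that $\ft{\del}_\Lam$ is itself a pure point measure with discrete support $S$ of tempered growth --- a non-periodic crystalline measure, as provided by the Kurasov--Sarnak construction (\thmref{thmI1}). With such a $\Lam$, the zero set imposed on $\ft{f}$ is the discrete set $S\setminus\{0\}$, the interpolation problem becomes an $\ell^1(S)$ problem solvable by your bump superposition (this is \thmref{thmB2}, including the auxiliary point $\xi\notin S$ used to force $f\neq 0$), and \thmref{thm4.5.2} upgrades the Fourier condition to an actual tiling at level $w=\ft{\del}_\Lam(\{0\})\cdot\int f$. This choice of $\Lam$ is not a convenience; it is the step that makes both halves of your argument valid.
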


We note that the set $\Om$ in this theorem may
 be chosen very sparse, for example, one may take
$\Om = \bigcup_{j=1}^{\infty} [\tau_j, \tau_j + j]$,
where the numbers $\tau_j$   grow arbitrarily fast.

\thmref{thmA1} implies in particular
 the existence of  non-periodic tilings
by a function $f$ such that
$\supp(f) \sbt [0, +\infty)$, i.e.\ the
support is \emph{bounded from below}
(while it cannot be
bounded from both above and below,
due to \thmref{thmLM91}).

The proof of \thmref{thmA1} relies on a recent result
due to Kurasov and Sarnak \cite{KS20},
who constructed a new type
of \emph{crystalline measures} on $\R$.
These are pure point
measures with discrete closed support,
whose Fourier transform 
is a measure of the same type.

\subsection{Tilings of unbounded density}
\label{secTUB}
It seems that very little attention
has been paid to 
tilings which are \emph{not} of bounded density.
In fact, we are not aware of any 
example of such a tiling  in the literature.
In \cite[Example 7.1]{KL96} 
some examples are given 
in a more general context,
where the points of the
translation set $\Lam$
are endowed with
nonnegative integer multiplicities
(so that $\Lam$ is actually not a set,
but a multi-set).

We will prove that there exist tilings 
of unbounded density in the proper sense.
Let us say that a set $\Lam \sbt \R$ has 
\emph{tempered growth} if there is $N$ such that
\begin{equation}
\label{eqA3.16}
\# (\Lam \cap (-r,r)) = O(r^N), \quad r \to +\infty.
\end{equation}

\begin{thm}
  \label{thmA3}
There is a set $\Lambda \sbt \R$ which is
 not of bounded density but has tempered growth,
 with the following
property: given any scalar $w$ one can find
a nonzero function $f$ in the Schwartz class, 
 such that $f+\Lambda$ is a tiling at level $w$.
\end{thm}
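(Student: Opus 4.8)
The plan is to construct $\Lam$ and $f$ via the Fourier side. Recall that if $\Lam$ is a discrete set of tempered growth, then $\delta_\Lam := \sum_{\lam \in \Lam} \delta_\lam$ is a tempered distribution, and the condition that $f + \Lam$ tiles at level $w$ is equivalent (after taking Fourier transforms, using $\ft f \in C_0$ since $f \in L^1$, and in the Schwartz case $\ft f$ Schwartz) to the distributional identity $\ft f \cdot \ft{\delta}_\Lam = w \delta_0$. So it suffices to exhibit a set $\Lam$, not of bounded density but of tempered growth, such that $\ft{\delta}_\Lam$ is a measure (or at least a distribution) whose support meets $\{0\}$ with a point mass and which otherwise lies in a closed set $S$ with $0 \notin S$; then one picks any nonzero Schwartz $f$ with $\ft f(0) = w$ and $\ft f \equiv 0$ on a neighborhood of $S$. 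The most convenient candidate is a one-dimensional analog of the Kurasov–Sarnak crystalline measures already invoked for Theorem 1.10: such a measure $\mu = \sum a_\lam \delta_\lam$ has $\ft\mu = \sum b_\gamma \delta_\gamma$ with both $\Lam = \supp\mu$ and $\Gamma = \supp\ft\mu$ locally finite. The new ingredient needed here is that we want the \emph{multiplicities} (or rather, the plain counting set) to have unbounded density — i.e.\ we genuinely want $\Lam$ to be a set, not a weighted measure — so the idea is to replace the point masses of a crystalline measure by unit masses on small clusters: around each $\lam$ with weight $a_\lam$, place $\lfloor |a_\lam|^{?}\rfloor$ or an appropriate number of integer points spread over an interval of length $\to 0$, arranged so that the resulting $\delta_{\Lam}$ still has Fourier transform supported (up to the atom at $0$) away from the origin.

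The key steps, in order, would be: (1) Start from a Kurasov–Sarnak–type measure $\mu$ on $\R$ with $\supp\mu$ discrete and $\supp\ft\mu \sbt \Z\setminus\{0\}$ (or any closed set bounded away from $0$), normalized so $\ft\mu$ has an atom at $0$; if no such object with an atom at $0$ is directly available, add a suitable periodic background (a Dirac comb $\sum_{n}\delta_{n}$, whose transform is again a comb, contributes the atom at $0$ together with unwanted atoms at nonzero integers, which we arrange to cancel or to be absorbed into $S$). (2) Near each support point $\lam_k$ of $\mu$, replace the weighted atom by a finite set $\Lam_k$ of $m_k$ equally spaced unit points inside $(\lam_k - \eps_k, \lam_k + \eps_k)$, with $m_k \to \infty$ along some subsequence (to force unbounded density) but $\sum_k m_k \1_{\{|\lam_k|<r\}} = O(r^N)$ (to keep tempered growth). (3) Choose $\eps_k \to 0$ fast enough that $\ft{\delta}_{\Lam_k}$ is, on every bounded frequency window, within $o(1)$ of $m_k \, e^{-2\pi i \lam_k \xi}$, so that $\ft{\delta}_\Lam - (\text{integer-weighted version of }\ft\mu)$ is a smooth function that is small near $\xi=0$; then perturb $f$ slightly, or absorb the error, so that $\ft f \cdot \ft{\delta}_\Lam = w\delta_0$ holds exactly. (4) Pick $f$ Schwartz with $\ft f$ compactly supported in a neighborhood of $0$ disjoint from $S$, with $\ft f(0)=w$, and verify the tiling identity and absolute convergence of $\sum f(x-\lam)$ — the latter follows from $f$ Schwartz and $\Lam$ of tempered growth.

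The main obstacle is step (3): turning the asymptotic/approximate relation ``$\ft{\delta}_\Lam \approx$ weighted crystalline measure'' into an \emph{exact} identity $\ft f \cdot \ft\delta_\Lam = w\delta_0$. The clean way around this is to not approximate at all but to engineer $\delta_\Lam$ so that its Fourier transform is \emph{exactly} of the desired form. One route: take $\Lam = \biguplus_k (\lam_k + \tfrac{1}{m_k}\{0,1,\dots,m_k-1\}\cdot \eta_k)$ where the inner arithmetic progression of length $m_k$ and step $\eta_k$ is chosen so that its Fourier transform $\sum_{j=0}^{m_k-1} e^{-2\pi i j \eta_k \xi}$ vanishes at a prescribed lattice of frequencies; but it is simpler to instead take $\Lam$ itself to be a union over a lattice of such finite progressions, so that $\delta_\Lam$ is a convolution $\delta_{\Gamma_0} * (\sum_k \delta_{\Lam_k - \lam_k})$ type object — at which point one designs the construction directly on the Fourier side and reads off $f$. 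Handling this convolution structure so that (a) $\supp\ft{\delta}_\Lam \cap \{|\xi|<\rho\} = \{0\}$, (b) the atom at $0$ has the right mass, and (c) the counting function of $\Lam$ is unbounded yet polynomially bounded, is the crux; once it is in place, choosing $f$ and checking the remaining properties is routine.
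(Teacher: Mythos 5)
Your overall strategy --- arrange $\Lam$ so that $\supp(\ft{\delta}_\Lam)$ meets a neighbourhood of the origin only at $\{0\}$, then choose a Schwartz $f$ with $\ft{f}$ supported near the origin --- is the same as the paper's, and your idea of splitting weighted atoms of a measure with locally finite spectrum into clusters of unit masses is exactly what the paper does (it starts from the measure giving mass $n^2+1$ to each integer $n$ and redistributes each mass into $n^2+1$ unit atoms near $n$). But your step (4) cannot work as stated, and this is a conceptual gap rather than a technicality. For a set $\Lam$ of unbounded density it is \emph{impossible} for $\ft{\delta}_\Lam$ to equal $c\cdot\delta_0$ on a neighbourhood of the origin: if it did, any positive Schwartz $\varphi$ with $\ft{\varphi}$ supported in that neighbourhood would tile with $\Lam$, which forces $\Lam$ to have bounded density by \cite[Lemma 2.1]{KL96}. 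So although $\ft{\delta}_\Lam$ restricted near $0$ is supported at the origin, it must contain higher derivatives of $\delta_0$ (in the paper's construction it is $\delta_0-\delta_0''/(4\pi^2)$), and the identity $\ft{f}\cdot\ft{\delta}_\Lam=w\,\delta_0$ then \emph{forces} $\ft{f}(0)=0$ --- this is precisely \thmref{thmA9}. Your prescription ``$\ft f(0)=w$, $\ft f$ vanishing near $S$'' is therefore unachievable; instead $f$ must be taken as a normalized $n$-th derivative of a bump function, the level $w$ arising from the pairing of $t^n\ft{\varphi}(t)$ against $\delta_0^{(n)}$ as in \lemref{lemR7.30}.

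The second gap is the one you yourself flag as the crux: converting the approximate relation in your step (3) into an exact identity. Your proposed workaround does not resolve it. If $\delta_\Lam$ is literally a convolution $\delta_{\Gamma_0}\ast\nu$ of a lattice comb with a fixed finite cluster measure, then $\Lam$ has bounded density; and if the clusters are allowed to grow with $n$ (as they must, to destroy bounded density), the convolution structure is lost and you are back to an uncontrolled error term, so you only get $\ft f\cdot\ft{\delta}_\Lam\approx w\,\delta_0$, which is not a tiling. The paper's solution is Kargaev's implicit function method: the cluster widths $\alpha_n$ are obtained as the fixed point of the nonlinear contraction $\alpha\mapsto\beta-\F(R\alpha)$ on a small ball in $\ell^\infty$, which arranges that the error is exactly the Fourier series of a function cancelling the unwanted terms on $(-\half,\half)$, so that $\ft{\delta}_\Lam$ coincides with $\delta_0-\delta_0''/(4\pi^2)$ there identically. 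Some mechanism of this kind is indispensable, and supplying it is the technical heart of the proof.
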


Moreover, in our example the set $\Lam$ is
contained in a small 
neighborhood of the integers. The
construction is done using a variant of Kargaev's 
implicit function method.

It follows from \cite[Lemma 2.1]{KL96}
that the function $f$ in \thmref{thmA3} must
change sign, i.e.\ $f$ cannot be
chosen nonnegative. This indicates  that
 cancellations are playing a decisive role in the 
tiling. We can even prove a stronger claim:

\begin{thm}
  \label{thmA9}
Let $f+\Lam$ be a tiling at some level $w$, 
where the set $\Lambda \sbt \R$ is  not of bounded density 
but has tempered growth, and where
 $f$ is a function in the Schwartz class.
Then $f$ must have zero integral.
\end{thm}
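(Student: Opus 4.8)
I would prove this by contradiction: assume $\int f \ne 0$; the goal is to deduce that $\Lam$ has bounded density, contrary to hypothesis. As a preliminary, since $f$ is Schwartz and $\Lam$ has tempered growth, one groups the points of $\Lam$ according to the unit interval containing them and uses the bound $\#(\Lam \cap [k,k+1)) = O((1+|k|)^{N})$ against the rapid decay of $f$ to see that $\sum_{\lam\in\Lam}|f(x-\lam)|$ converges locally uniformly on $\R$. Hence $F(x) := \sum_{\lam\in\Lam} f(x-\lam)$ is continuous, and as it equals $w$ a.e.\ it equals $w$ everywhere. Writing $\mu$ for the measure that assigns unit mass to each point of $\Lam$ — a measure of tempered growth, hence a tempered distribution — the tiling identity says exactly $\mu \ast f = w$; the same estimate shows that $\mu \ast \phi$ is a well-defined continuous function for every Schwartz $\phi$ and justifies the uses of Fubini's theorem below.

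The main step uses that $\ft f(0) = \int f \ne 0$, so $\ft f$ is nonvanishing on some interval $[-2\del,2\del]$. Fix $\rho \in C_c^\infty(\R)$ with $\rho \equiv 1$ on $[-\del,\del]$ and $\supp\rho \sbt (-2\del,2\del)$; fix a nonzero, nonnegative, even $\psi \in C_c^\infty(\R)$ supported in $(-\tfrac{\del}{2},\tfrac{\del}{2})$; let $h$ be the inverse Fourier transform of $\psi$ (a real Schwartz function with $h(0)\ne 0$); and put $g := h^2$. Then $g$ is a nonnegative, nonzero Schwartz function with $g(0)>0$, while $\ft g$ is supported in $(-\del,\del)$ with $\ft g(0) = \int g > 0$. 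Since $\ft g$ is supported where $\ft f \ne 0$ and $\rho\equiv 1$, the function $\ft k := \rho\,\ft g/\ft f$ lies in $C_c^\infty(\R)$; letting $k$ be its inverse Fourier transform (a Schwartz function) one has $\ft f\,\ft k = \ft g$, hence $f \ast k = g$, and $\ft k(0) = \ft g(0)/\ft f(0)$. Now, using $\mu \ast f = w$ and re-associating the triple convolution,
\[
\sum_{\lam\in\Lam} g(x-\lam) = \bigl(\mu\ast(f\ast k)\bigr)(x) = \bigl((\mu\ast f)\ast k\bigr)(x) = w\int_\R k = w\,\frac{\ft g(0)}{\ft f(0)} =: K
\]
for every $x\in\R$; in particular the sum $\sum_{\lam\in\Lam}g(x-\lam)$ is a constant.

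To conclude: since $\Lam$ is not of bounded density it is infinite, in particular nonempty, and because $g \ge 0$ with $g>0$ on a neighborhood of $0$, the sum $\sum_{\lam}g(x-\lam)$ is nonnegative for all $x$ and strictly positive for some $x$; thus $K>0$ (and in particular $w\ne 0$). Choosing $\eta,\eps>0$ with $g \ge \eps$ on $[-\eta,\eta]$ gives $\eps\cdot\#(\Lam\cap[x-\eta,x+\eta]) \le \sum_{\lam}g(x-\lam) = K$ for every $x$, i.e.\ $\#(\Lam\cap[x-\eta,x+\eta]) \le K/\eps$. Covering a unit interval by $\lceil 1/(2\eta)\rceil$ translates of $[-\eta,\eta]$ then yields $\sup_{x\in\R}\#(\Lam\cap[x,x+1)) < \infty$, so $\Lam$ has bounded density — the desired contradiction. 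Therefore $\int f = 0$.

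The step I expect to be the crux is making the Fourier/convolution manipulations rigorous under the weak hypothesis that $\Lam$ has only tempered growth (not bounded density): that $\mu$ is a tempered distribution, that the a.e.\ tiling identity upgrades to $\mu\ast f = w$ with both sides genuine functions, and that $\mu\ast f\ast k$ may be re-associated — all of which follow from tempered growth together with the rapid decay of $f$ and $k$. Conceptually the argument is that $\ft f\cdot\ft\mu = w\,\ft{\1}$ and $\ft f(0)\ne 0$ force $\ft\mu$ to coincide with a point mass at $0$ on a neighborhood of the origin — a \emph{spectral gap} — and convolving $\mu$ with $g$, whose spectrum lies inside this neighborhood, transfers the gap into the uniform bound on $\#(\Lam\cap[x-\eta,x+\eta])$.
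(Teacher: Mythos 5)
Your proposal is correct and follows essentially the same route as the paper: assuming $\int f=\ft f(0)\neq 0$, divide by $\ft f$ near the origin to produce a nonnegative, nonzero Schwartz function (your $g$, the paper's $h$) whose Fourier transform is compactly supported near $0$ and which also tiles with $\Lam$, then conclude that $\Lam$ must have bounded density. The only difference is that the paper closes the argument by citing \cite[Lemma 2.1]{KL96} for the fact that a tiling by a nonnegative nonzero function forces bounded density, whereas you prove that step directly with the $g\ge\eps$ on $[-\eta,\eta]$ covering argument — a harmless and self-contained substitute.
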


It may seem counter-intuitive at first glance that 
one can tile $\R$ at a nonzero level $w$ 
by translates of a Schwartz function  $f$ whose integral is zero.

\subsection{Organization of the paper}
The rest of the paper is organized as follows.

In \secref{secF1} some preliminary background
is given, and Fourier analytic conditions for tiling 
are discussed.

 In \secref{secL1} we prove that
 if $f+\Lam$ is a tiling at level zero,
where $f \in L^1(\R)$ is a  nonzero function and 
 $\Lam \sbt \R$ is a nonempty set of bounded density,
then $f$ has zero integral
(\thmref{thmA5}) and 
 $\Lam$ is  relatively dense  (\thmref{thmA6}).

In \secref{secSM1} we establish the existence
of non-periodic tilings $f+\Lam$ of bounded density,
such that the function $f$ has ``sparse'' support
(\thmref{thmA1}).

In \secref{secUB1} we construct
tilings $f+\Lam$ of unbounded density,
such that $\Lam$ has  tempered growth 
and  $f$ is in the Schwartz  class
(\thmref{thmA3}). We also show
that in any such a tiling, the function $f$ must have
zero integral (\thmref{thmA9}).

In the last section, \secref{secOP1},
we pose some open problems.

% =======================================

\section{Preliminaries. Fourier analytic conditions for tiling.}
 \label{secF1}

It is well-known that in
 the  study of  translational tilings, 
Fourier analysis plays an important role,
see e.g.\ \cite{Kol04}.
If $f \in L^1(\R)$ then its Fourier transform
is defined by
\begin{equation}
\hat{f}(t) = \int_{\R} f(x) \, e^{-2\pi i t x} \, dx.
\end{equation}

If $\alpha$ is a tempered distribution on $\R$,
then $\alpha(\varphi)$ denotes the action 
of $\alpha$ on a Schwartz function  $\varphi$.
 The Fourier transform $\ft\alpha$ is defined by 
$\ft{\alpha}(\varphi) = \alpha(\ft{\varphi})$.

If $\alpha$ is a tempered distribution on $\R$, and if
$\varphi$ is a Schwartz function, then
the convolution $\alpha \ast \varphi$ is 
a tempered distribution whose Fourier transform
is $\ft{\alpha} \cdot \ft{\varphi}$.

We use $\supp(\alpha)$ to denote the closed support
of a tempered distribution $\alpha$.

If $f \in L^1(\R)$ then $\supp(f)$ is the
smallest closed set such that $f$ vanishes
a.e.\ on its complement. This set coincides 
with the support of $f$ in the distributional sense.

For more details on distribution theory we refer
the reader to  \cite{Rud91}.

\subsection{Necessary conditions for tiling}
For a discrete set $\Lambda \sbt \mathbb{R}$
 we define the measure 
\begin{equation}
\label{eqI5.4}
\delta_\Lambda:=\sum_{\lambda\in\Lambda}\delta_\lambda.
\end{equation}
The tiling condition 
\eqref{eqI1.1}  can then be restated  as 
\begin{equation}
\label{P2.2.11}
f \ast \delta_\Lam = w \quad \text{a.e.}
\end{equation}
If $\Lambda$ has bounded density 
then the measure $\delta_\Lambda$ is a tempered
 distribution on $\R$. So, at least formally, taking
the Fourier transform of both 
sides of \eqref{P2.2.11} yields
\begin{equation}
\label{P2.2.12}
\ft{f} \cdot \ft{\delta}_\Lam = w \cdot \delta_0.
\end{equation}

If $f$ is a Schwartz function,
then condition \eqref{P2.2.12}  makes sense
and it is equivalent to \eqref{P2.2.11}.
To the contrary, for an arbitrary function $f \in L^1(\R)$ 
(not assumed to be in the Schwartz class)
 the product 
$\ft{f} \cdot \ft{\delta}_\Lam$
is not well-defined,
and the condition \eqref{P2.2.12}
 can only be interpreted as a heuristic principle.

The following result, inspired by 
the heuristic condition \eqref{P2.2.12},
 provides a necessary condition for tiling
which holds for any $f \in L^1(\R)$.

\begin{thm}[{\cite{KL16}}]
\label{thm4.5.1}
Let $f\in L^1(\mathbb R)$, and $\Lambda\sbt\R$
 be a discrete set of bounded density. If $f+\Lambda$ is a tiling
at some level $w$, then
\begin{equation}
\label{eqI3.1}
\supp(\hat\delta_\Lambda) \setminus \{0\}
\subset \{\hat f=0\}.
\end{equation}
\end{thm}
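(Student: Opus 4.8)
The obstruction to a direct argument is that the Fourier identity \eqref{P2.2.12} is, for a general $f\in L^1(\R)$, only a heuristic: the product $\ft f\cdot\ft\delta_\Lambda$ of the merely continuous function $\ft f$ with the tempered distribution $\ft\delta_\Lambda$ need not be meaningful. The plan is to get around this by \emph{regularizing} the tiling: convolving $f+\Lambda$ with a Schwartz function $\psi$ replaces $f$ by $g:=f\ast\psi$, which \emph{is} a Schwartz function, so the Fourier condition for $g+\Lambda$ becomes legitimate; letting $\psi$ vary then extracts local information about $\ft\delta_\Lambda$.

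\textbf{Step 1 (pass to a regularized tiling).} Fix any Schwartz function $\psi$ and set $g:=f\ast\psi$. Since $f\in L^1(\R)$ and $\psi$ is Schwartz, $g$ is a Schwartz function. I claim $g+\Lambda$ is a tiling at level $w\,\ft\psi(0)$. Writing $g(x-\lambda)=\int \psi(y)\,f(x-y-\lambda)\,dy$ and summing over $\lambda$, the interchange of summation and integration
\[
\sum_{\lambda}g(x-\lambda)=\int \psi(y)\Bigl(\sum_{\lambda}f(x-y-\lambda)\Bigr)dy
\]
is justified for a.e.\ $x$ by the bounded density of $\Lambda$: it makes the function $\sum_{\lambda}|f(\,\cdot-\lambda)|$ uniformly locally integrable, whence $\int|\psi(y)|\sum_{\lambda}|f(x-y-\lambda)|\,dy<\infty$. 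By the tiling hypothesis the inner sum equals $w$ a.e., so $\sum_\lambda g(x-\lambda)=w\int\psi=w\,\ft\psi(0)$ a.e.; that is, $\delta_\Lambda\ast g=w\,\ft\psi(0)$ a.e. Since $\Lambda$ has bounded density, $\delta_\Lambda$ is a tempered distribution, and by the convolution rule recalled in \secref{secF1} the Fourier transform of $\delta_\Lambda\ast g$ is $\ft\delta_\Lambda\cdot\ft g$; as the constant $w\,\ft\psi(0)$ has Fourier transform $w\,\ft\psi(0)\,\delta_0$, we obtain
\[
\ft\delta_\Lambda\cdot\ft g \;=\; w\,\ft\psi(0)\,\delta_0,\qquad\text{with}\quad \ft g=\ft f\cdot\ft\psi .
\]
Note that although $\ft f$ by itself is only continuous, the product $\ft f\cdot\ft\psi=\ft g$ is a bona fide Schwartz function, so its product with the distribution $\ft\delta_\Lambda$ is well-defined.

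\textbf{Step 2 (localize).} Let $t_0\in\R$ satisfy $t_0\ne0$ and $\ft f(t_0)\ne0$; it is enough to show $t_0\notin\supp(\ft\delta_\Lambda)$. By continuity of $\ft f$ choose an open interval $W\ni t_0$ with $0\notin W$ on which $\ft f$ has no zero. Pick $\psi$ so that $\ft\psi\in C_c^\infty(\R)$, $\supp\ft\psi\sbt W$, and $\ft\psi(t_0)\ne0$ (take $\psi$ to be the inverse Fourier transform of such a bump; then $\psi$ is Schwartz). Since $0\notin\supp\ft\psi$ we have $\ft\psi(0)=0$, so Step 1 gives $\ft\delta_\Lambda\cdot\ft g=0$. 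On a sufficiently small open interval $V\ni t_0$ the function $\ft g=\ft f\cdot\ft\psi$ is smooth and nowhere zero, hence $1/\ft g$ is smooth on $V$; therefore, for every $\chi\in C_c^\infty(V)$,
\[
\ft\delta_\Lambda(\chi)=\ft\delta_\Lambda\bigl(\ft g\cdot(\chi/\ft g)\bigr)=\bigl(\ft\delta_\Lambda\cdot\ft g\bigr)(\chi/\ft g)=0 ,
\]
because $\chi/\ft g\in C_c^\infty(V)$ and $\ft g\cdot(\chi/\ft g)=\chi$ on all of $\R$. Thus $\ft\delta_\Lambda$ vanishes on $V$, i.e.\ $t_0\notin\supp(\ft\delta_\Lambda)$. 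Taking the contrapositive, every $t_0\in\supp(\ft\delta_\Lambda)$ with $t_0\ne0$ satisfies $\ft f(t_0)=0$, which is \eqref{eqI3.1}.

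\textbf{Main obstacle.} The genuine content is all in Step 1, in the passage from $f+\Lambda$ to the smooth tiling $g+\Lambda$: the bounded-density hypothesis is precisely what makes $\delta_\Lambda$ tempered and what legitimizes the Fubini interchange (via the uniform local integrability of $\sum_\lambda|f(\,\cdot-\lambda)|$). Once $g$ is Schwartz the rest is soft — choosing a bump $\ft\psi$ supported near $t_0$ and away from the origin, and dividing the vanishing distribution $\ft\delta_\Lambda\cdot\ft g$ by the smooth nonvanishing factor $\ft g$ on a neighbourhood of $t_0$.
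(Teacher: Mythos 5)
There is a genuine gap, and it sits exactly at the point your own preamble identifies as the difficulty. In Step 1 you assert that $g:=f\ast\psi$ is a Schwartz function because $f\in L^1(\R)$ and $\psi$ is Schwartz. This is false in general: convolving with $\psi$ makes $g$ smooth with bounded derivatives, but it cannot create rapid decay that $f$ does not have (take $f(x)=(1+x^2)^{-1}$; then $f\ast\psi$ decays only like $x^{-2}$, and one can make the decay as slow as one likes within $L^1$). Equivalently, on the Fourier side $\ft{g}=\ft{f}\cdot\ft{\psi}$ is compactly supported but only \emph{continuous}, because $\ft{f}$ is merely continuous; it is not a $C^\infty$ function of polynomial growth, hence not a multiplier for tempered distributions. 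Consequently the product $\ft{\delta}_\Lambda\cdot\ft{g}$ is not a well-defined distribution, the identity $\ft{\delta_\Lambda\ast g}=\ft{\delta}_\Lambda\cdot\ft{g}$ is not available, and in Step 2 the function $\chi/\ft{g}$ is only continuous, not in $C_c^\infty(V)$, so you cannot test $\ft{\delta}_\Lambda$ against it. What does survive of Step 1 is the correctly justified identity $\delta_\Lambda\ast(f\ast\psi)=w\int\psi$ a.e.; but by itself this yields no information about $\supp(\ft{\delta}_\Lambda)$ near $t_0$. This is precisely why the earlier papers \cite{KL96}, [Kol00a], [Kol00b] only obtained the result under extra hypotheses.

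The missing ingredient is Wiener's tauberian theorem, which is what the proof in \cite{KL16} rests on. Since $\ft{f}(t_0)\neq0$, Wiener's division lemma produces $k\in L^1(\R)$ with $\ft{f}\cdot\ft{k}=1$ on a neighbourhood $W$ of $t_0$. Choose a Schwartz bump $\varphi$ with $\supp(\ft{\varphi})\sbt W$, $0\notin\supp(\ft\varphi)$ and $\ft{\varphi}(t_0)\neq0$, and set $\psi:=k\ast\varphi\in L^1(\R)$. Then $f\ast\psi=\varphi$, because $\ft{f}\,\ft{k}\,\ft{\varphi}=\ft{\varphi}$: the regularized function is not merely smooth but \emph{equal to a prescribed Schwartz function}. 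Your Fubini computation (which works for any $\psi\in L^1$, by the uniform local integrability of $\sum_\lambda|f(\cdot-\lambda)|$) then gives $\varphi\ast\delta_\Lambda=w\int\psi$, a constant; since $\varphi$ is Schwartz the Fourier identity is now legitimate, so $\ft{\varphi}\cdot\ft{\delta}_\Lambda$ is a multiple of $\delta_0$ and $\ft{\delta}_\Lambda$ vanishes on the open set $\{\ft{\varphi}\neq0\}\setminus\{0\}\ni t_0$. In short: mollifying in the space variable does not repair the non-smoothness of $\ft{f}$; you must invert $\ft{f}$ locally inside the Wiener algebra.
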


The proof of this result is based on Wiener's tauberian theorem.
In the earlier works \cite{KL96}, \cite{Kol00a}, \cite{Kol00b}
the result was proved under various extra assumptions.

\subsection{Sufficient conditions for tiling}
One may ask whether 
\thmref{thm4.5.1}  admits a converse, i.e.\ if the condition
\eqref{eqI3.1} implies that $f+\Lambda$ is a tiling
at some level $w$. 
If the distribution $\ft{\delta}_\Lam$ happens to
be a \emph{measure} on $\R$, then 
the product $\ft{f} \cdot \ft{\delta}_\Lam$
is a well-defined measure and the
condition \eqref{P2.2.12} makes sense.
In this case, the two conditions \eqref{P2.2.12} and \eqref{eqI3.1}
are equivalent, and can be shown to imply that $f+\Lam$ is a tiling:

\begin{thm}\label{thm4.5.2}
Let $\Lambda \sbt \R$ have bounded density and suppose that 
$\ft{\delta}_\Lambda$ is a locally finite measure.
If $f\in L^1(\R)$ satisfies \eqref{eqI3.1} then
 $f+\Lambda$ is a tiling at level
$w =  \ft{\delta}_\Lam(\{0\}) \cdot \int f$.
\end{thm}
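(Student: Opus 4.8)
The plan is to show that the hypothesis \eqref{eqI3.1} forces the tempered distribution $\ft{f}\cdot\ft{\delta}_\Lambda$ to be supported at the origin, identify it as a constant multiple of $\delta_0$, and then invert the Fourier transform to recover the tiling identity \eqref{P2.2.11}. First I would record that, since $\Lambda$ has bounded density, $\delta_\Lambda$ is a tempered distribution, and by assumption $\ft{\delta}_\Lambda$ is a locally finite (positive, in fact, but we only need locally finite) measure; in particular it is a Radon measure of at most polynomial growth. Because $f\in L^1(\R)$, the Fourier transform $\ft f$ is a bounded continuous function vanishing at infinity. Hence the product $\ft f\cdot\ft{\delta}_\Lambda$ is a well-defined locally finite measure, and one checks by a standard approximation/regularization argument (convolving $\delta_\Lambda$ with a Schwartz bump and passing to the limit, using that $\widehat{\alpha\ast\varphi}=\ft\alpha\cdot\ft\varphi$) that $\widehat{f\ast\delta_\Lambda}=\ft f\cdot\ft{\delta}_\Lambda$ as tempered distributions.

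Next, I would use the support condition. The measure $\ft f\cdot\ft{\delta}_\Lambda$ is carried by $\supp(\ft{\delta}_\Lambda)$, and on $\supp(\ft{\delta}_\Lambda)\setminus\{0\}$ the continuous function $\ft f$ vanishes by \eqref{eqI3.1}. Since $\ft f$ is continuous, the set $\{\ft f=0\}$ is closed, so the closed support of the measure $\ft f\cdot\ft{\delta}_\Lambda$ is contained in $\{0\}$. A distribution (here a measure) supported at a single point is a finite linear combination of $\delta_0$ and its derivatives; being a genuine measure, it can have no derivative terms, so $\ft f\cdot\ft{\delta}_\Lambda = c\,\delta_0$ for some scalar $c$. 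Evaluating the mass at $\{0\}$, and using that $\ft f$ is continuous at $0$ with $\ft f(0)=\int f$, gives $c = \ft f(0)\cdot\ft{\delta}_\Lambda(\{0\}) = \big(\int f\big)\cdot\ft{\delta}_\Lambda(\{0\})$; call this value $w$.

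Finally, I would undo the Fourier transform: from $\widehat{f\ast\delta_\Lambda} = w\,\delta_0$ and the fact that the inverse Fourier transform of $\delta_0$ is the constant function $1$, we get $f\ast\delta_\Lambda = w$ as tempered distributions, i.e.\ $\sum_{\lambda\in\Lambda}f(x-\lambda)=w$ in the distributional sense. To conclude that this is an honest tiling in the sense of \eqref{eqI1.1}, one must upgrade this to an a.e.\ identity with absolute convergence of the series; here I would invoke that $f\in L^1$ and $\Lambda$ has bounded density to see that $\sum_{\lambda}|f(x-\lambda)|\in L^1_{\mathrm{loc}}$ (its integral over $[x,x+1)$ is dominated by $\big(\sup_x\#(\Lambda\cap[x,x+1))\big)\cdot\|f\|_1$), so the series converges absolutely a.e.\ and defines a locally integrable function; this function, being equal to $w$ as a distribution, equals $w$ almost everywhere.

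I expect the main technical obstacle to be the careful justification of the two Fourier-analytic manipulations with merely $L^1$ (not Schwartz) $f$: namely the identity $\widehat{f\ast\delta_\Lambda}=\ft f\cdot\ft{\delta}_\Lambda$ and the passage back from the distributional tiling identity to the pointwise one. Both are handled by regularization — approximating $\delta_\Lambda$ by $\delta_\Lambda\ast\varphi_\eps$ with $\varphi_\eps$ a Schwartz approximate identity, for which the product formula is classical, then letting $\eps\to0$ and using the bounded-density bound to control the limit — but the bookkeeping needs to be done with some care. Everything else is routine once the structure-of-distributions-supported-at-a-point fact is invoked.
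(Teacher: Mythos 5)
Your proposal is correct and takes essentially the same route as the paper, which isolates the key identity $\widehat{f\ast\delta_\Lambda}=\ft{f}\cdot\ft{\delta}_\Lambda$ as \thmref{thmC2} and proves it by exactly the kind of regularization and dominated-convergence argument you sketch, after which the support hypothesis \eqref{eqI3.1} forces the product measure to equal $w\,\delta_0$. (One harmless slip: your parenthetical assertion that $\ft{\delta}_\Lambda$ is a positive measure is false in general --- it is typically a complex measure --- but, as you note, only local finiteness is actually used.)
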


A proof of this result is given below.
In \cite{KL96} the result was proved 
under the extra assumption that $\ft{f}$ is 
a smooth function.

As an example, \thmref{thm4.5.2} 
applies if the set $\Lambda$ has a periodic structure,
i.e.\ if it has the form \eqref{eqI2.1}. In this case
$\ft{\delta}_\Lambda$ is a (pure point) 
measure by Poisson's summation formula,
and the condition \eqref{eqI3.1} is
both necessary and sufficient for
a function  $f\in L^1(\R)$ to tile
at some level $w$ with the translation set $\Lam$.

\thmref{thm4.5.2} leaves open, though, the question as to
whether  the condition \eqref{eqI3.1} is sufficient
for tiling in the general case, i.e.\ for an arbitrary
 set $\Lam \sbt \R$
 of bounded density. This question was addressed 
recently in  \cite{Lev20} where it
was answered in the negative:

\begin{thm}[{\cite{Lev20}}]
\label{thmI8.10}
There is a set $\Lambda \subset \R$  of bounded density
and a function $f \in L^1(\mathbb{R})$,
such that \eqref{eqI3.1} is satisfied however $f + \Lam$ is not a tiling
at any level.
\end{thm}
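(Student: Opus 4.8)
The plan is to exploit the gap between \thmref{thm4.5.1} and \thmref{thm4.5.2}. By \thmref{thm4.5.2}, whenever $\hat\delta_\Lam$ is a locally finite measure, condition \eqref{eqI3.1} already forces $f+\Lam$ to be a tiling; so in a counterexample $\hat\delta_\Lam$ must be a genuine distribution, not representable by a measure. Since $\Lam$ has bounded density, $\delta_\Lam$ is a nonnegative measure of at most linear growth and $\hat\delta_\Lam$ is a tempered distribution of finite order, so the construction must make this order at least one. Two soft remarks fix the shape of the example. \emph{(i)} $\hat\delta_\Lam$ cannot carry a $\delta'_a$ (or any point‑derivative) at an \emph{isolated} point $a$ of its support: localising with a smooth bump $\chi\equiv1$ near $a$ and supported away from the rest of $\supp\hat\delta_\Lam$, one would get that $\psi\ast\delta_\Lam$ equals a constant multiple of $x\,e^{2\pi i a x}$ for a Schwartz function $\psi$; but $\psi\ast\delta_\Lam=\sum_{\lambda}\psi(x-\lambda)$ is bounded because $\Lam$ has bounded density, contradicting linear growth. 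Hence $S:=\supp\hat\delta_\Lam$ must be a thin but \emph{non‑discrete} closed set. \emph{(ii)} The test function $f$ cannot be smooth: were $f$ Schwartz with $\hat f$ vanishing on $S$, a mean‑value bootstrap along the perfect part of $S$ would force all derivatives of $\hat f$ to vanish on $S$, so $\hat f\cdot\hat\delta_\Lam$ would reduce to a multiple of $\delta_0$ and $f+\Lam$ would be a tiling. So $f$ must be a non‑smooth function in $L^1(\R)$, with $\hat f$ merely continuous.

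With this in mind, the first and hardest task is to construct $\Lam$. One needs an honest set $\Lam\sbt\R$ of bounded density — so that $\delta_\Lam=\sum_{\lambda}\delta_\lambda$ is a nonnegative, unit‑multiplicity discrete measure — whose Fourier transform is a genuine order‑one distribution, say of the form $\hat\delta_\Lam=\mu_0+\mu_1'$ with $\mu_0,\mu_1$ locally finite measures supported on a thin closed set $S\neq\R$ and $\mu_1$ not purely atomic. Taking inverse transforms, $\mu_1'$ contributes an oscillating, linearly growing summand of the shape $x\cdot(\mu_1)^{\vee}$ to $\delta_\Lam$, which must be balanced against $(\mu_0)^{\vee}$ so that the total is nevertheless a nonnegative discrete measure of bounded density. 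Carrying this out is delicate and is the technical core of \cite{Lev20}.

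Granting such a $\Lam$, I would choose $f\in L^1(\R)$ (permissible, since the hypothesis asks no more) with $\hat f$ continuous, vanishing on $S$, but carrying a controlled non‑differentiability along $S$, so that $\hat f$ is \emph{not} flat on $S$ and the formal product $\hat f\cdot\hat\delta_\Lam$ is not even defined. Then \eqref{eqI3.1} holds by construction. To see that $f+\Lam$ is not a tiling, recall that $f\ast\delta_\Lam=\sum_{\lambda}f(x-\lambda)$ defines a locally integrable function and that ``$f+\Lam$ is a tiling at level $w$'' means precisely $\widehat{f\ast\delta_\Lam}=w\,\delta_0$. Mollifying, with $f_n=f\ast\rho_n$ and $\rho_n$ a smooth approximate identity, one has $\widehat{f_n\ast\delta_\Lam}=\hat f_n\cdot\hat\delta_\Lam$ and $f_n\ast\delta_\Lam\to f\ast\delta_\Lam$ locally in $L^1$, hence $\widehat{f\ast\delta_\Lam}=\lim_n\hat f_n\cdot\hat\delta_\Lam$; in particular this distribution is supported in $S$. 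Writing $\hat f_n\cdot(\mu_0+\mu_1')=\hat f_n\mu_0+(\hat f_n\mu_1)'-(\hat f_n)'\mu_1$ and letting $n\to\infty$: the first two terms tend to $0$ because $\hat f$ vanishes on $S\supset\supp\mu_0\cup\supp\mu_1$, whereas $(\hat f_n)'\mu_1$ — the interaction of the difference quotients of the non‑smooth $\hat f$ with $\mu_1$ along $S$ — is arranged to converge to a nonzero measure (or distribution) supported in $S\setminus\{0\}$. Hence $\widehat{f\ast\delta_\Lam}$ is not a multiple of $\delta_0$, so $f\ast\delta_\Lam$ is not a.e.\ constant: $f+\Lam$ is a tiling at no level, although \eqref{eqI3.1} is satisfied.

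The main obstacle is the construction of $\Lam$ in the second paragraph — a bounded‑density set whose Fourier transform is a thin‑supported distribution of positive order, reconciled with the positivity and integrality of $\delta_\Lam$. The remaining ingredients — the localisation argument, the smoothness obstruction, and the mollification computation certifying non‑tiling — are comparatively routine once such a $\Lam$, together with a compatible non‑smooth $f$, is in hand.
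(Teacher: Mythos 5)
Your soft analysis is on target: $\hat\delta_\Lam$ cannot be a locally finite measure (else \thmref{thm4.5.2} would apply), point-derivatives at isolated points of $\supp\hat\delta_\Lam$ are excluded because $\sum_\lam\psi(x-\lam)$ is bounded, and $f$ cannot be smooth. But the proposal stops exactly where the proof has to begin, and two of its three load-bearing steps are acknowledged placeholders. You never produce $\Lam$ --- you only describe what $\hat\delta_\Lam$ should look like and concede that carrying this out is "the technical core" of \cite{Lev20} (there it is done by a Kargaev-type implicit-function perturbation of $\Z$ forcing $\hat\delta_\Lam$ to agree on an interval with a prescribed distribution supported on $\{0\}\cup E$ for a suitable compact $E$). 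More importantly, you never explain why a function $f\in L^1(\R)$ with $\hat f=0$ on $S\setminus\{0\}$ but with a nonzero "interaction" with $\hat\delta_\Lam$ should exist at all. For most closed sets $S$ it does not: since $\Lam$ has bounded density, $\hat\chi\cdot\hat\delta_\Lam$ is locally the Fourier transform of a bounded function, i.e.\ a pseudomeasure, and if $S$ is a set of \emph{spectral synthesis} then every compactly supported element of the Fourier algebra vanishing on $S$ is annihilated by every pseudomeasure supported on $S$ --- your obstruction term is then forced to vanish and $f+\Lam$ \emph{is} a tiling. The existence of a closed set for which this fails is precisely Malliavin's non-synthesis theorem, which is the engine of the cited proof (as the paper notes right after the statement) and which your argument needs but never invokes; the phrase "is arranged to converge to a nonzero measure" is exactly the deep point being assumed.

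There is also a concrete flaw in the step certifying non-tiling. With $f_n=f\ast\rho_n$ and $\rho_n$ a smooth approximate identity, $\hat f_n=\hat f\cdot\hat\rho_n$ is still only continuous --- smoothing $f$ improves the \emph{decay} of $\hat f_n$, not its regularity --- so the term $(\hat f_n)'\mu_1$ is undefined and the product $\hat f_n\cdot\hat\delta_\Lam$ is not meaningful either. The workable device is to convolve $f$ with a Schwartz function $\varphi$ whose Fourier transform is smooth and compactly supported, observe that $\hat f\hat\varphi$ is a compactly supported function in the Fourier algebra vanishing on $\supp(\hat\delta_\Lam)\setminus\{0\}$, and pair it against the pseudomeasure $\hat\delta_\Lam$; failure of synthesis supplies an $\hat f$ for which this pairing is nonzero, whence $f\ast\varphi\ast\delta_\Lam$ is not constant and $f+\Lam$ tiles at no level. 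In short: the skeleton is right, but the two constructions and the one theorem (Malliavin) that turn it into a proof are all absent.
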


The proof of this result is based on the relation of the problem to
Malliavin's \emph{non-spectral synthesis} example.

\thmref{thmI8.10} implies that a converse
to \thmref{thm4.5.1} can only be valid under certain
extra assumptions.  One example of such a converse
is given by \thmref{thm4.5.2}. Another example 
is the following result:

\begin{thm}\label{thm4.5.8}
Let $f\in L^1(\mathbb R)$, and let
$\Lambda \sbt \R$ be a discrete set of bounded density.
 If the set
$\supp(\hat\delta_\Lambda) \setminus \{0\}$ is closed
and is disjoint from $\supp(\ft{f})$, 
then  $f+\Lambda$ is a tiling at some level $w$.
\end{thm}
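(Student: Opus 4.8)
The plan is to reduce the statement to the already-available \thmref{thm4.5.2} by a regularization argument. The obstruction to applying \thmref{thm4.5.2} directly is that $\ft{\delta}_\Lam$ need not be a measure. To get around this I would multiply $\ft{\delta}_\Lam$ by a smooth cutoff that is identically $1$ near $0$ and whose support avoids $\supp(\ft f)$, thereby producing a new tempered distribution that \emph{is} a nice measure (in fact a finite one) and still interacts with $\ft f$ exactly as $\ft{\delta}_\Lam$ should. Concretely: since $\supp(\ft{\delta}_\Lam)\setminus\{0\}$ is closed and disjoint from the closed set $\supp(\ft f)$, and since $\{0\}$ is an isolated point of the former relative to the latter's complement, one can choose $\psi$ a Schwartz (or $C^\infty_c$) function with $\ft\psi \equiv 1$ on a neighborhood of $0$, $\supp(\ft\psi)$ compact, and $\supp(\ft\psi)\cap\supp(\ft f)=\varnothing$. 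Set $g := \psi \ast \delta_\Lam$. Then $\ft g = \ft\psi\cdot\ft{\delta}_\Lam$ is a compactly supported distribution; because $\delta_\Lam$ has bounded density, $g$ is a nice function (a locally bounded, slowly growing function — in fact $g(x)=\sum_{\lambda}\psi(x-\lambda)$ converges absolutely and is bounded), so $\ft g$ is a \emph{finite measure}.

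Next I would verify the two hypotheses of \thmref{thm4.5.2} for this modified object. First, $\ft{\delta}_\Lam$ restricted to a neighborhood of $0$ equals $\ft g$ there, and in particular $\supp(\ft g)\setminus\{0\}\subset\supp(\ft{\delta}_\Lam)\setminus\{0\}\subset\{\ft f = 0\}$, since $\ft f$ vanishes on the complement of $\supp(\ft f)$, which contains $\supp(\ft\psi)\supset\supp(\ft g)$. Wait — more directly, $\supp(\ft g)\cap\supp(\ft f)\subset\supp(\ft\psi)\cap\supp(\ft f)=\varnothing$, so $\ft f\cdot\ft g=0$ as distributions, which gives \eqref{eqI3.1} for $g$ trivially (and more). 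Also $\ft g(\{0\}) = \ft{\delta}_\Lam(\{0\})=:m$, which one computes equals $D(\Lam)$, the density. Second, $g$ lies in $L^1$? It does not, but \thmref{thm4.5.2} only needs $f\in L^1$, not $g$; I have $g$ playing the role of $\delta_\Lam$, not of $f$. So I would apply \thmref{thm4.5.2} with the \emph{given} $f\in L^1(\R)$ and with the measure $\ft g$ in place of $\ft{\delta}_\Lam$, concluding that $f$ tiles, at level $w = \ft g(\{0\})\cdot\int f = m\cdot\int f$, against \emph{some} translation structure encoded by $g$ — but here I must be careful, because \thmref{thm4.5.2} as stated is about $\delta_\Lam$, not an arbitrary measure $g$.

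Because of that last subtlety, the cleanest route is instead to inspect the \emph{proof} of \thmref{thm4.5.2} (or prove the needed lemma directly): what that argument really shows is that if $\mu$ is a locally finite measure on $\R$ such that $\mu$ is a tempered distribution whose inverse Fourier transform has bounded density, and $\ft f\cdot\mu = c\,\delta_0$ for a constant $c$, then $f\ast(\text{that point distribution}) = c\cdot\mathbf 1$. In our situation, $\ft f \cdot \ft{\delta}_\Lam$, interpreted through the mollification, equals $\ft f\cdot\ft g$ (since $\ft\psi=1$ where it matters and $\ft f\cdot\ft g$ already sees the whole story), and this product is $m\cdot\delta_0$ because away from $0$ it vanishes by the disjoint-support hypothesis while at $0$ it has mass $\ft f(0)\cdot m = (\int f)\cdot m$. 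Undoing the Fourier transform gives $f\ast\delta_\Lam = m\int f$ a.e., i.e.\ $f+\Lam$ is a tiling at level $w = m\int f$. The main obstacle is the bookkeeping that makes ``$\ft f\cdot\ft{\delta}_\Lam$ is well-defined and equals $m\delta_0$'' rigorous for $f\in L^1$ not Schwartz: this is handled precisely by testing against Schwartz functions supported in $\supp(\ft\psi)$ versus those vanishing near that set, exactly the localization that the disjointness of supports affords — this is the same mechanism underlying \thmref{thm4.5.1}, and I expect the argument to mirror its proof closely.
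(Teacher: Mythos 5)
The paper deliberately omits a proof of this theorem (``we do not include its proof''), so there is no in-paper argument to compare against; I can only assess your proposal on its own terms. Your overall strategy --- split $\ft{\delta}_\Lam$ into a piece concentrated at the origin plus a piece whose support avoids $\supp(\ft{f})$, identify the first piece as $m\cdot\delta_0$, and kill the second using disjointness of closed supports --- is the right one. But your first step contains a concrete error: you cannot in general choose $\psi$ with $\ft{\psi}\equiv 1$ near $0$ \emph{and} $\supp(\ft{\psi})\cap\supp(\ft{f})=\varnothing$, because in the main case of interest $\ft{f}(0)=\int f\neq 0$, so $0\in\supp(\ft{f})$ and every neighborhood of $0$ meets $\supp(\ft{f})$. (The hypothesis only separates $\supp(\ft{\delta}_\Lam)\setminus\{0\}$ from $\supp(\ft{f})$; the origin itself typically lies in both sets.) This is not cosmetic: from this impossible $\psi$ you deduce $\ft{f}\cdot\ft{g}=0$, which contradicts your own later conclusion that $\ft{f}\cdot\ft{g}=m(\int f)\,\delta_0\neq 0$. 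The correct choice is to put $\supp(\ft{\psi})$ inside a small interval $(-a,a)$ disjoint from the \emph{closed} set $\supp(\ft{\delta}_\Lam)\setminus\{0\}$ (such an $a$ exists precisely because that set is closed and omits $0$). Then $\ft{\psi}\cdot\ft{\delta}_\Lam$ is supported at the single point $0$, hence is a finite combination of derivatives of $\delta_0$, and since $\psi\ast\delta_\Lam$ is a bounded function this combination must reduce to $m\cdot\delta_0$ --- the mechanism of \thmref{thm5.1} and \lemref{lem5.2}, not of disjointness from $\supp(\ft{f})$. Note also that ``compactly supported distribution'' does not imply ``finite measure''; one needs the support-at-a-point argument just described.

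Even after this repair, the central step remains unproved: with $\nu:=\delta_\Lam-\psi\ast\delta_\Lam$, a translation-bounded measure with $\supp(\ft{\nu})\sbt\supp(\ft{\delta}_\Lam)\setminus\{0\}$ disjoint from $\supp(\ft{f})$, one must show $f\ast\nu=0$. You defer this to ``bookkeeping,'' but it is the actual content of the theorem: since $f$ is only in $L^1$, the product $\ft{f}\cdot\ft{\nu}$ is undefined, $\ft{\nu}$ need not be a measure, and (as you yourself observe) \thmref{thm4.5.2} cannot be applied with $g$ in place of $\delta_\Lam$. One genuinely has to prove $\supp(\widehat{f\ast\nu})\sbt\supp(\ft{f})\cap\supp(\ft{\nu})=\varnothing$ by testing against suitable functions and controlling quantities like $\ft{\varphi}\ast f$ well enough (uniform local $\ell^1$-type bounds) to integrate them against the translation-bounded $\nu$ and pass to the limit while staying off $\supp(\ft{\nu})$; the fact that $\ft{f}$ vanishes on an open neighborhood of $\supp(\ft{\nu})$, not merely on $\supp(\ft{\nu})$ itself, is what makes this work (compare \thmref{thmI8.10}, which shows that condition \eqref{eqI3.1} alone is insufficient). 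A complete proof must carry out this step, along the lines of the arguments in \cite{Kol00a}, \cite{Kol00b}.
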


We will not use this result in the paper
and we do not include its proof. For other results of similar type
see \cite[Theorem 3]{Kol00a}, \cite[Theorem 5]{Kol00b}.

\subsection{Translation-bounded measures}
A measure $\mu$ on $\R$  satisfying the condition
\begin{equation}
\sup_{x \in \R} |\mu|([x,x+1)) < \infty
\end{equation}
is said to be \emph{translation-bounded}.
If a measure $\mu$ is translation-bounded,
then it is a tempered distribution. 
If $\mu$  is a translation-bounded measure  on $\R$, and if $\nu$
is a finite measure  on $\R$, then the convolution $\mu \ast \nu$
is a translation-bounded measure. 

\begin{thm}
\label{thmC2}
Let $\mu$ be a translation-bounded measure on $\R$,
and suppose that  $\ft{\mu}$ is a locally finite measure.
If $\nu$ is a finite measure on $\R$, then the Fourier 
transform of the convolution $\mu \ast \nu$
 is the measure $\ft{\mu} \cdot \ft{\nu}$.
\end{thm}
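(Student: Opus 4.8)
The plan is to reduce the statement to a local computation against Schwartz test functions, using the fact that both sides are tempered distributions and that $\widehat{\mu}$ is assumed to be a locally finite measure. Since $\mu$ is translation-bounded it is a tempered distribution, and since $\nu$ is finite the convolution $\mu \ast \nu$ is again translation-bounded, hence tempered; thus $\widehat{\mu \ast \nu}$ is a well-defined tempered distribution, and by the general convolution rule it equals $\widehat{\mu} \cdot \widehat{\nu}$ \emph{in the distributional sense} — the content of the theorem is that this distributional product is in fact the (honest) product of the locally finite measure $\widehat{\mu}$ with the bounded continuous function $\widehat{\nu}$. So the first step is to record that $\widehat{\nu}$ is a bounded uniformly continuous function on $\R$ (it is the Fourier transform of a finite measure), and that multiplying a locally finite measure by a bounded continuous function yields a locally finite measure; call this measure $\rho := \widehat{\mu} \cdot \widehat{\nu}$.

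Next I would show $\widehat{\mu \ast \nu} = \rho$ as tempered distributions by testing against an arbitrary Schwartz function $\varphi$. We have $\widehat{\mu \ast \nu}(\varphi) = (\mu \ast \nu)(\widehat{\varphi})$. The clean way to proceed is to regularize: pick a Schwartz function $\psi$ with $\widehat{\psi}$ compactly supported and $\widehat{\psi}(0) = 1$, set $\psi_\eps(x) = \psi(\eps x)$, so $\widehat{\psi_\eps}$ is an approximate identity concentrating at $0$ with total mass $1$. Since $\widehat{\mu}$ is a locally finite measure, $\mu = \widehat{\widehat{\mu}(-\cdot)}$ can be paired with Schwartz functions by integration against that measure after a Fourier transform; more directly, for the smooth compactly-Fourier-supported functions in play one can integrate by parts freely. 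The idea is to write, for each fixed $\eps>0$, the identity $\widehat{(\mu \ast \nu) \ast \psi_\eps} = \widehat{\mu}\cdot\widehat{\nu}\cdot\widehat{\psi_\eps}$, which is legitimate because $(\mu\ast\nu)\ast\psi_\eps$ is a translation-bounded \emph{smooth} function of tempered growth whose Fourier transform is a compactly supported distribution, and because $\widehat{\nu}\widehat{\psi_\eps}$ is a Schwartz-class multiplier so the product $\widehat{\mu}\cdot(\widehat{\nu}\widehat{\psi_\eps})$ is an honest finite measure. Then I let $\eps \to 0$: on the left, $(\mu\ast\nu)\ast\psi_\eps \to \mu\ast\nu$ in the sense of tempered distributions (convolution with an approximate identity), so $\widehat{(\mu\ast\nu)\ast\psi_\eps}\to\widehat{\mu\ast\nu}$; on the right, pairing $\widehat{\mu}\cdot\widehat{\nu}\cdot\widehat{\psi_\eps}$ against a test function $\varphi$ gives $\int \varphi\,\widehat{\nu}\,\widehat{\psi_\eps}\,d\widehat{\mu}$, and since $\widehat{\psi_\eps}\to 1$ pointwise, stays uniformly bounded, and $\varphi\widehat{\nu}$ is integrable against $|\widehat{\mu}|$ on the support of $\varphi$ (because $\widehat\mu$ is locally finite and $\varphi$ decays), dominated convergence yields $\int \varphi\,\widehat\nu\,d\widehat\mu = \rho(\varphi)$. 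Comparing limits gives $\widehat{\mu\ast\nu} = \rho$.

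The step I expect to be the main obstacle is making rigorous the ``honest product'' identity $\widehat{(\mu\ast\nu)\ast\psi_\eps} = \widehat{\mu}\cdot(\widehat{\nu}\widehat{\psi_\eps})$ — i.e. controlling the interplay between the distributional convolution theorem and the requirement that we genuinely integrate against the measure $\widehat\mu$. One must check that $\widehat\mu\cdot(\widehat\nu\widehat{\psi_\eps})$ is a finite measure (it is: $\widehat{\nu}\widehat{\psi_\eps}$ is Schwartz and $\widehat\mu$ is locally finite and of tempered growth as a distribution, so the integral of a Schwartz function against $|\widehat\mu|$ is finite), and that its Fourier transform is indeed $(\mu\ast\nu)\ast\psi_\eps$. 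This last point follows by Fourier inversion applied to the finite measure $\widehat\nu\widehat{\psi_\eps}\,d\widehat\mu$ together with the already-established \emph{distributional} identity $\widehat{\mu\ast\nu\ast\psi_\eps}=\widehat\mu\,\widehat\nu\,\widehat{\psi_\eps}$, which holds since all three factors pair correctly once $\widehat{\psi_\eps}$ truncates the frequency support. A secondary technical point is verifying the convergence $(\mu\ast\nu)\ast\psi_\eps\to\mu\ast\nu$ in $\mathcal{S}'$, which is standard for translation-bounded measures: test against $\varphi\in\mathcal{S}$, move $\psi_\eps$ onto the test side, and use that $\widecheck{\psi_\eps}\ast\varphi\to\varphi$ in $\mathcal{S}$ while $\mu\ast\nu$ acts continuously on $\mathcal{S}$. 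Assembling these gives the theorem; I would present it cleanly by isolating the lemma ``a locally finite measure times a Schwartz function is a finite measure whose Fourier transform is computed by the distributional rule'' and then running the $\eps\to 0$ limit once.
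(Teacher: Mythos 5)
Your overall strategy --- regularize so that both sides become legitimate objects, identify them, then pass to the limit by dominated convergence --- is the same skeleton as the paper's proof, but the pivotal step is asserted rather than proved. The identity $\widehat{(\mu\ast\nu)\ast\psi_\eps}=\ft{\mu}\cdot\ft{\nu}\cdot\ft{\psi}_\eps$ \emph{as an equality with an honest measure} is itself a regularized instance of the theorem: $\nu\ast\psi_\eps$ is a smooth $L^1$ function but not Schwartz (a finite measure need not have rapidly decaying tails), so the convolution rule for $\mathcal S'\ast\mathcal S$ does not apply to $\mu\ast(\nu\ast\psi_\eps)$; and $\ft{\nu}\,\ft{\psi}_\eps$ is compactly supported and continuous but not smooth ($\ft{\nu}$ need not be differentiable), so it is not a ``Schwartz-class multiplier'' either. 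Your justification --- Fourier inversion ``together with the already-established distributional identity $\widehat{\mu\ast\nu\ast\psi_\eps}=\ft{\mu}\,\ft{\nu}\,\ft{\psi}_\eps$'' --- is circular, since that identity is precisely what has to be established. The real content is showing that the distributional pairings can be replaced by genuine integration against the measures $\mu$ and $\ft{\mu}$; the paper does this by computing $\ft{\mu}(h_\eps)$ with $h_\eps=(\psi\,\ft{\nu})\ast\ft{\chi}_\eps$ in two ways and using Fubini to check that $|\varphi|\ast|\nu|$ is $|\mu|$-integrable (because $|\mu|\ast|\nu|$ is translation-bounded), which is exactly what licenses the dominated-convergence step. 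That Fubini/translation-boundedness argument is the heart of the proof and is missing from your write-up.

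Two secondary problems. First, your scaling is inconsistent: with $\psi_\eps(x)=\psi(\eps x)$ the transform $\ft{\psi}_\eps$ concentrates at the origin and $(\mu\ast\nu)\ast\psi_\eps$ does \emph{not} tend to $\mu\ast\nu$; everything else you write ($\ft{\psi}_\eps\to1$ boundedly, $\ft{\psi}(0)=1$, ``convolution with an approximate identity'') presupposes $\psi_\eps(x)=\eps^{-1}\psi(x/\eps)$, so the definition must be corrected. Second, you test against an arbitrary Schwartz $\varphi$ and apply dominated convergence to $\int\varphi\,\ft{\nu}\,\ft{\psi}_\eps\,d\ft{\mu}$; this requires $\int|\varphi|\,d|\ft{\mu}|<\infty$, which does not follow merely from $\ft{\mu}$ being a locally finite measure that is tempered (its total variation need not have polynomial growth). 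The paper sidesteps this by proving the pairing identity only for compactly supported test functions, which already determines the distribution.
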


In particular, let  $\Lambda \sbt \R$ be a discrete set
 of bounded density, and let $f \in L^1(\R)$.
Then the measure $\delta_\Lam$ is 
 translation-bounded, and the convolution
$f \ast \del_\Lam$ is the sum of the
series  $\sum_{\lambda\in\Lambda}f(x-\lambda)$
which converges absolutely a.e., see
\cite[Lemma 2.2]{KL96}.
If the  distribution
$\ft{\delta}_\Lambda$ is assumed to be a locally finite measure,
then using \thmref{thmC2} we obtain that
the three conditions
\eqref{P2.2.11}, \eqref{P2.2.12} and \eqref{eqI3.1} are
equivalent. We thus
see that \thmref{thm4.5.2} is a consequence
of \thmref{thmC2}.

\thmref{thmC2} should be known to experts but we could not
find a proof in the literature,
 so we include one below for completeness.

\begin{proof}[Proof of \thmref{thmC2}]
We will use $\sig$ to denote the measure $\ft{\mu}$.
The assertion of the theorem means that 
if $\psi$ is a Schwartz function with compact support then
\begin{equation}
  \label{eqP1.10}
\int_{\R}  \ft{\psi(x)} \, d(\mu \ast \nu)(x) = \int_{\R} \psi(t) \, \ft{\nu}(t) \, d\sig(t).
\end{equation}

Let $\chi$ be a Schwartz function whose Fourier transform
$\ft{\chi}$ is non-negative, has compact support,
 $\int \ft{\chi}(t) dt =1$, and for each $\eps > 0$ let
$\chi_\eps(x) := \chi( \eps x)$. Let
$h_\eps := (\psi \cdot \ft{\nu}) \ast \ft{\chi}_\eps$,
then $h_\eps$ is an infinitely smooth function with compact support.
As $\eps \to 0$, the function $h_\eps$ remains supported on
a certain  interval $I=[a,b]$ that does not depend on $\eps$, and
$h_\eps$ converges to 
$\psi \cdot \ft{\nu}$ uniformly on  $I$. Hence
\begin{equation}
  \label{eqP1.3}
\lim_{\eps \to 0} \ft{\mu}(h_\eps) = 
\lim_{\eps \to 0}
\int_{\R} h_\eps(t)  d\sig(t) =
\int_{\R} \psi(t) \, \ft{\nu}(t) \,  d\sig(t).
\end{equation}

The function $\psi$ is the Fourier transform of some function
$\varphi$ in the Schwartz class. 
Let $g_\eps := (\varphi \ast \nu) \cdot \chi_\eps$,
then $g_\eps$ is a smooth function in $L^1(\R)$ and
we have $\ft{g}_\eps = h_\eps$.
Since $h_\eps$ belongs to the Schwartz 
space, the same is true for $g_\eps$, and it follows that
\begin{equation}
  \label{eqP1.7}
\ft{\mu}(h_\eps) = \mu(\ft{h}_\eps)
= \int_{\R} {g_\eps}(-x) \, d\mu(x)
= \int_{\R}  (\varphi \ast \nu)(-x) \, \chi_\eps(-x)\, d\mu(x).
\end{equation}
We observe that $|\chi_\eps(-x)| \leq 1$ and
$\chi_\eps(-x) \to 1$ pointwise as $\eps \to 0$.
We now wish to apply the dominated convergence
theorem to \eqref{eqP1.7}. Using Fubini's theorem 
we obtain 
\begin{equation}
  \label{eqP1.23}
 \int_{\R}  (|\varphi| \ast |\nu|)(-x) \, |d\mu|(x)
= \int_{\R}   |\varphi(-x)| \, d(|\mu| \ast |\nu|)(x).
\end{equation}
The function $\varphi$ has fast decay being in the
Schwartz class, while
$|\mu| \ast |\nu|$ is a translation-bounded measure,
hence the integral in \eqref{eqP1.23} converges.
We may therefore apply to \eqref{eqP1.7}
the dominated convergence theorem, which yields
\begin{equation}
  \label{eqP1.8}
\lim_{\eps \to 0} \ft{\mu}(h_\eps)
= \int_{\R}  (\varphi \ast \nu)(-x) \, d\mu(x)
= \int_{\R}   \varphi (-x) \, d(\mu \ast \nu)(x).
\end{equation}
But $\varphi(-x) = \ft{\psi}(x)$, so we see
that \eqref{eqP1.10} follows from
 \eqref{eqP1.3} and \eqref{eqP1.8} as needed.
\end{proof}

% =======================================

\section{Tiling at level zero} 
\label{secL1}

 In this section we prove that
 if $f+\Lam$ is a tiling at level zero,
where $f \in L^1(\R)$ is  nonzero and 
 $\Lam \sbt \R$ is  nonempty and has bounded density,
then $f$ has zero integral
(\thmref{thmA5}) and 
 $\Lam$ is a relatively dense set (\thmref{thmA6}).

\subsection{The function \texorpdfstring{$f$}{f} has zero integral}
\label{subsecZI}
We begin with \thmref{thmA5}, for which we 
give two proofs.  The first proof 
is Fourier analytic, and is based on the following result:

\begin{thm}[{\cite{KL16}}]
\label{thm5.1}
Let $f\in L^1(\mathbb R)$ have nonzero integral. If  $\Lambda
\sbt \R$
 is a set of bounded density  and  $f+\Lambda$ is a tiling
at some level $w$, then there is $a>0$ such that
$\hat\delta_\Lambda=c\cdot\delta_0$
in $(-a,a)$, where $c=w\cdot(\int f)^{-1}$.
\end{thm}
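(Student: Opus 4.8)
The plan is to exploit the Fourier-analytic necessary condition for tiling together with the hypothesis that $\int f \neq 0$, which forces $\hat f(0) \neq 0$ and hence $\hat f \neq 0$ on a whole neighborhood of the origin by continuity of $\hat f$ (recall $f \in L^1$). First I would set $c = w\cdot(\int f)^{-1}$ and invoke \thmref{thm4.5.1}, which gives $\supp(\hat\delta_\Lambda)\setminus\{0\}\subset\{\hat f=0\}$. Since $\hat f$ is continuous and $\hat f(0)=\int f\neq 0$, there is $a>0$ with $\hat f\neq 0$ on $(-a,a)$; combined with the inclusion above, this means $\supp(\hat\delta_\Lambda)\cap(-a,a)\subset\{0\}$. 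Thus on the open interval $(-a,a)$ the tempered distribution $\hat\delta_\Lambda$ is supported at the single point $0$, so by the standard structure theorem it equals a finite linear combination of $\delta_0$ and its derivatives there: $\hat\delta_\Lambda = \sum_{k=0}^{m} c_k \delta_0^{(k)}$ on $(-a,a)$.

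The next step is to show that all the coefficients of the derivatives vanish, leaving only $c_0\delta_0$, and then to identify $c_0=c$. For this I would test against Schwartz functions and use the tiling relation itself. Pick a Schwartz function $\varphi$ with $\hat\varphi$ supported in $(-a,a)$. Convolving the tiling identity $f\ast\delta_\Lambda=w$ with $\varphi$ and taking Fourier transforms, one gets $\hat f\cdot\hat\delta_\Lambda\cdot\hat\varphi = w\,\hat\varphi(0)\,\delta_0$ as distributions (this is legitimate because $\hat\varphi$ has compact support, so the products make sense; alternatively one works directly on the physical side as in the proof of \thmref{thm4.5.1}). On $(-a,a)$ we may replace $\hat\delta_\Lambda$ by $\sum_k c_k\delta_0^{(k)}$, and since $\hat f$ is smooth near $0$ — here I would note that $\hat f$ restricted to a neighborhood of $0$ need not be smooth, so more care is needed: the cleanest route is to convolve first with a fixed smooth bump to smooth things out, or to use that $f\ast\varphi\in L^1$ has a Schwartz-class-smooth Fourier transform and argue that $\widehat{f\ast\varphi}\cdot\hat\delta_\Lambda$, a distribution supported at $0$, must equal $w\hat\varphi(0)\delta_0$; evaluating the derivatives of $\widehat{f\ast\varphi}=\hat f\hat\varphi$ at $0$ against the $c_k\delta_0^{(k)}$ and letting $\varphi$ range over a suitable family forces $c_k=0$ for $k\geq 1$ and $c_0\,\hat f(0)=w$, i.e. $c_0 = w/\hat f(0)=c$.

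The main obstacle I anticipate is precisely the regularity issue just flagged: $\hat f$ is merely continuous, not smooth, so one cannot naively multiply $\hat f$ by $\delta_0^{(k)}$. The way around it is to absorb a smoothing convolution into $f$ from the outset — replace $f$ by $f\ast\eta$ for a Schwartz $\eta$ with $\hat\eta=1$ near $0$, which is still a tiling function (convolving the tiling identity with $\eta$ preserves it, with the same level up to the constant $\hat\eta(0)=1$) and whose Fourier transform is now $C^\infty$ near $0$ — and then run the derivative-annihilation argument cleanly. Once the coefficients $c_k$ with $k\ge 1$ are killed and $c_0$ is pinned down, restricting back to $(-a',a')$ for possibly smaller $a'$ if needed (it is not needed here) gives $\hat\delta_\Lambda = c\cdot\delta_0$ on the interval, completing the proof. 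A subtlety worth double-checking is that $\supp(\hat\delta_\Lambda)$ could in principle accumulate at $0$ from outside $\{0\}$; but this is excluded because $\{\hat f=0\}$ is closed and misses $(-a,a)$, so $\supp(\hat\delta_\Lambda)\setminus\{0\}$ stays a positive distance from $0$, and $(-a,a)$ can be shrunk to ensure $\supp(\hat\delta_\Lambda)\cap(-a,a)\subset\{0\}$.
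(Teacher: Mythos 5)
First, a point of comparison: the paper does not actually prove \thmref{thm5.1} --- it is quoted from \cite{KL16} with the remark that it follows from the \emph{proof} of Theorem 4.1 there, via Wiener's tauberian theorem. Your attempt to derive it instead from the \emph{statement} of \thmref{thm4.5.1} is therefore a genuinely different route, and its first half is correct: since $\hat f$ is continuous and $\hat f(0)=\int f\neq 0$, the inclusion \eqref{eqI3.1} gives $\supp(\hat\delta_\Lambda)\cap(-a,a)\subset\{0\}$ for some $a>0$ (your closing worry about accumulation at $0$ is already answered by this), and the structure theorem for distributions supported at a point yields $\hat\delta_\Lambda=\sum_{k=0}^{m}c_k\delta_0^{(k)}$ on $(-a,a)$.

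The gap is in the step that kills the coefficients $c_k$, $k\ge 1$. You correctly identify the obstacle --- $\hat f$ is merely continuous, so $\hat f\,\hat\varphi$ cannot be paired with $\delta_0^{(k)}$ --- but the fix you propose does not work: for a Schwartz $\eta$ with $\hat\eta=1$ near $0$ one has $\widehat{f\ast\eta}=\hat f\,\hat\eta=\hat f$ on that neighborhood, so the Fourier transform of $f\ast\eta$ is exactly as non-smooth as that of $f$. Convolution on the physical side is multiplication on the Fourier side and can never improve the regularity of $\hat f$; the only way to mollify $\hat f$ would be to multiply $f$ by a cutoff, which destroys the tiling. As written, the central step of your argument is therefore not carried out (and, separately, the identity $\widehat{(f\ast\varphi)\ast\delta_\Lambda}=\widehat{f\ast\varphi}\cdot\hat\delta_\Lambda$ needs justification, since $f\ast\varphi$ is not a Schwartz function). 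The skeleton is salvageable with tools already in the paper and without ever multiplying $\hat f$ by a distribution: for a Schwartz $\varphi$ with $\supp(\hat\varphi)\subset(-a,a)$ and $\hat\varphi(0)=1$, the function $\varphi\ast\delta_\Lambda=\sum_{\lambda\in\Lambda}\varphi(\cdot-\lambda)$ is \emph{bounded} (because $\Lambda$ has bounded density), while its Fourier transform $\hat\varphi\cdot\hat\delta_\Lambda$ is a combination of $\delta_0^{(j)}$, $j\le m$, with top coefficient $c_m\hat\varphi(0)$, so $\varphi\ast\delta_\Lambda$ is a polynomial of degree $m$; boundedness forces $m=0$. (Alternatively, argue as in \lemref{lemR7.30}: if $m\ge1$ one builds a Schwartz $g$ with $\int g=0$ tiling $\Lambda$ at a nonzero level, contradicting \thmref{thmKL1}.) Finally, $c_0$ is pinned down by Fubini: $w\,\hat\varphi(0)=(f\ast\varphi\ast\delta_\Lambda)(x)=c_0\,\hat\varphi(0)\int f$, giving $c_0=w\cdot(\int f)^{-1}$, an identification that also covers the case $w=0$.
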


This result follows from the proof of \cite[Theorem 4.1]{KL16}
and it is stated in that paper as a remark on p.\ 4598.
The proof of \thmref{thm5.1} is based on Wiener's tauberian theorem.

\begin{proof}[First proof of \thmref{thmA5}]
Let $f+\Lambda$ be a tiling
at level $w=0$, and suppose to the contrary that $f$ has nonzero integral.
Then by \thmref{thm5.1} there is $a>0$ such that
$\hat\delta_\Lambda=c\cdot\delta_0$ in $(-a,a)$,
where $c=w\cdot(\int f)^{-1}$. Since $w=0$
this means that  $\ft{\delta}_\Lam$
vanishes in a neighborhood $(-a,a)$ of the origin. 
We will show that this cannot happen.

Indeed, choose a Schwartz function 
$\varphi$ such that $\supp(\varphi)\subset (-a,a)$
and $\hat\varphi>0$. Then we have
$\ft{\del}_\Lam(\varphi)  = 0$ since
 $\ft{\delta}_\Lam$ vanishes in a neighborhood 
of $\supp(\varphi)$. On the other hand,
\[
\ft{\del}_\Lam(\varphi) = 
\del_\Lam(\ft{\varphi}) =
\sum_{\lam\in\Lam}  \ft{\varphi}(\lam) > 0,
\]
since $\Lam$ is  nonempty and 
 $\hat\varphi$ is everywhere positive.
We thus arrive at a contradiction.
\end{proof}

As a remark, we record here the following observation:

\begin{lem}
\label{lem5.2}
Let $\Lambda \sbt \R$ be a nonempty set of 
bounded density,   and suppose
that there is $a>0$ such that
$\hat\delta_\Lambda=c\cdot\delta_0$
in $(-a,a)$. Then $c>0$, and $\Lam$ has a uniform
density given by $D(\Lam) = c$. 
\end{lem}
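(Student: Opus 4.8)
The plan is to derive the statement from \thmref{thmKL1}. I would first prove that $c>0$, then produce an explicit tiling $f+\Lam$ at the (consequently nonzero) level $w=c$, and finally read off from \thmref{thmKL1} that $\Lam$ has uniform density $D(\Lam)=w\cdot(\int f)^{-1}=c$. The only substantive point is the positivity of $c$; everything else is bookkeeping.

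To show $c>0$, I would fix a nonzero Schwartz function $\eta$ whose Fourier transform is a $C^\infty$ function supported in $(-\tfrac a2,\tfrac a2)$, and set $\psi:=|\eta|^2$. Then $\psi$ is a nonnegative Schwartz function, $\hat\psi=\hat\eta\ast\overline{\hat\eta(-\,\cdot\,)}$ is a $C^\infty$ function supported in $(-a,a)$, and $\int\psi=\|\eta\|_2^2>0$. The convolution $\del_\Lam\ast\psi$ is the continuous function $x\mapsto\sum_{\lam\in\Lam}\psi(x-\lam)$, and its Fourier transform is the product $\hat\del_\Lam\cdot\hat\psi$; since $\hat\psi$ is supported in $(-a,a)$, where $\hat\del_\Lam=c\,\delta_0$, this product equals $c\,\hat\psi(0)\,\delta_0=\bigl(c\int\psi\bigr)\delta_0$, and hence $\sum_{\lam\in\Lam}\psi(x-\lam)=c\int\psi$ for every $x\in\R$. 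The left-hand side is everywhere $\ge 0$, and it is strictly positive at the point $x=\lam_0+y_0$, where $\lam_0$ is any element of the nonempty set $\Lam$ and $y_0$ is chosen with $\psi(y_0)>0$. Therefore $c\int\psi>0$, and dividing by $\int\psi>0$ gives $c>0$.

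Next I would pick a Schwartz function $f$ with $\hat f\in C^\infty_c(-a,a)$ and $\hat f(0)=1$, so that $f$ is nonzero and $\int f=1$. Exactly as above, $\del_\Lam\ast f$ has Fourier transform $\hat\del_\Lam\cdot\hat f=c\,\hat f(0)\,\delta_0=c\,\delta_0$, so $\del_\Lam\ast f$ is the constant function $c$; equivalently $\sum_{\lam\in\Lam}f(x-\lam)=c$ for every $x$, the series converging absolutely (indeed locally uniformly) because $f$ is Schwartz and $\Lam$ has bounded density. Thus $f+\Lam$ is a tiling at the level $w=c$, which is nonzero by the previous paragraph, and \thmref{thmKL1} now gives $D(\Lam)=w\cdot(\int f)^{-1}=c$, completing the proof.

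The step that requires some care is the identity $\hat\del_\Lam\cdot\hat g=c\,\hat g(0)\,\delta_0$ used twice above for a Schwartz function $g$ with $\hat g$ supported in $(-a,a)$: here $\hat\del_\Lam\cdot\hat g$ is the product of a tempered distribution with a Schwartz function, and for any $\phi\in\mathcal S$ one has $(\hat\del_\Lam\cdot\hat g)(\phi)=\hat\del_\Lam(\hat g\,\phi)$ with $\hat g\,\phi\in C^\infty_c(-a,a)$, so the hypothesis that $\hat\del_\Lam$ agrees with $c\,\delta_0$ on $(-a,a)$ applies and yields $\hat\del_\Lam(\hat g\,\phi)=c\,(\hat g\,\phi)(0)=c\,\hat g(0)\,\phi(0)$. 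If one prefers to avoid \thmref{thmKL1}, one can argue directly: once $\sum_{\lam\in\Lam}g(x-\lam)=c\int g$ is established for all band-limited Schwartz $g$, it extends by dominated convergence to Beurling--Selberg majorants and minorants of an interval $[-R/2,R/2]$ of exponential type below $2\pi a$ (these are entire, bounded, in $L^1$ and $O(x^{-2})$ at infinity, hence limits of band-limited Schwartz functions in a dominated fashion), and this sandwiches $\#(\Lam\cap[x-R/2,x+R/2])$ between $cR-O(1)$ and $cR+O(1)$ uniformly in $x$, giving both $c>0$ and $D(\Lam)=c$.
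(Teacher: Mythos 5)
Your proof is correct and follows essentially the same route as the paper: the paper takes a single everywhere-positive Schwartz function $\varphi$ with $\int\varphi=1$ and $\supp(\ft{\varphi})\subset(-a,a)$, observes that $\varphi+\Lam$ tiles at level $c$ (forcing $c>0$ by positivity and nonemptiness of $\Lam$), and then invokes \thmref{thmKL1} to get $D(\Lam)=c$, exactly as you do. Your splitting into a nonnegative $|\eta|^2$ for the positivity step and a separate normalized $f$ for the density step, and the Beurling--Selberg aside, are harmless variations on the same idea.
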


\begin{proof}
Let $\varphi$ be a Schwartz function 
such that $\varphi > 0$, $\int \varphi = 1$ and
$\supp(\ft{\varphi})\subset (-a,a)$.
Then we have  $\ft{\varphi} \cdot \ft{\del}_\Lam = c \cdot \del_0$
and hence $\varphi  + \Lam$ is a tiling at level $c$.
Since $\varphi$ is a positive function and
$\Lambda$ is nonempty,
 the tiling level $c$ must also be  positive.
Finally, we obtain from \thmref{thmKL1} that
the set $\Lam$ has a uniform density given by $D(\Lam) = c$. 
\end{proof}

Next we give our
 second proof  of \thmref{thmA5}. This proof, which
 does not involve Fourier analysis, is based on the
following  result due to Ruzsa and Sz\'{e}kely \cite{RS83}.

\begin{thm}[{\cite{RS83}}]
\label{thmRS83}
Let $f \in L^1(\R)$ be  real-valued with $\int f > 0$.
Then there is a  nonnegative (nonzero) $g \in L^1(\R)$ 
such that the convolution $f \ast g$ is nonnegative a.e.
\end{thm}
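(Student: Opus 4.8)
The plan: after multiplying $f$ by a suitable positive constant we may assume $\int f=1$. Write $f=f_+-f_-$ with $f_\pm=\max(\pm f,0)\ge 0$; then $\int f_+-\int f_-=1$, so in particular $\int f_+>\int f_-$. The underlying idea is to take a very ``wide'' nonnegative kernel $g$, so that at every point $x$ the value $(f*g)(x)$ is essentially $g(x)\cdot\int f=g(x)$ and hence positive.

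The heart of the matter is the case in which the negative part $f_-$ has compact support; here I would use the explicit kernel $g_T(x)=e^{-|x|/T}$ (nonnegative, in $L^1(\R)$) and let $T\to\infty$. Choose $R$ so large that $\supp(f_-)\subset[-R,R]$ and $\int_{[-R,R]}f_+>\int f_-$. For $|x|\le R$, using $e^{-|x-y|/T}\ge e^{-2R/T}$ for $y\in[-R,R]$ and $e^{-|x-y|/T}\le 1$, one gets $(f*g_T)(x)\ge e^{-2R/T}\int_{[-R,R]}f_+-\int f_-$, which is positive for $T$ large; for $|x|>R$ one similarly gets $(f*g_T)(x)\ge e^{-|x|/T}\bigl(e^{-R/T}\int_{[-R,R]}f_+-e^{R/T}\int f_-\bigr)>0$. (If $f$ itself has compact support this is cleaner still: $(f*g_T)(x)=e^{-x/T}\int f(y)e^{y/T}\,dy$ for $x>R$, and the integral tends to $\int f=1$.) This not only shows $f*g_T\ge 0$ but yields the quantitative bound $(f*g_T)(x)\ge c\,g_T(x)$ for some $c>0$ and all $x$, which is what one wants to keep available for perturbations.

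For a general $f$, write $f_-=v+w$ with $v=f_-\cdot\1_{[-R,R]}$ compactly supported and $\|w\|_1$ arbitrarily small, apply the previous case to $\tilde f:=f_+-v$ (whose negative part $v$ is compactly supported and which still has $\int\tilde f>0$) to obtain $(\tilde f*g_T)(x)\ge c\,g_T(x)$, and then absorb the error: $f*g_T=\tilde f*g_T-w*g_T\ge c\,g_T-w*g_T$. The step I expect to be the main obstacle is precisely the bound $w*g_T\le c\,g_T$ everywhere: near a point $x$ the convolution $w*g_T$ only feels the small total mass of $w$, but far out the exponential decay of $g_T$ has to compete with the possibly very slow decay of the tail of $f_-$. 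I would attack it by letting the decay of the kernel be matched to that of the tail of $f_-$ (straightforward once $f_-$ has a finite first moment, to which case one hopes to reduce) and, failing that, by the Fourier-analytic route below. Note it suffices throughout to produce a \emph{finite positive measure} $\mu$ with $f*\mu\ge 0$, since then $g:=\mu*\eta$ with $\eta$ a nonnegative bump is a nonnegative $L^1$ function, $g\ne0$, with $f*g=(f*\mu)*\eta\ge 0$.

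An alternative that settles $L^1$-membership of $g$ at once: since $\hat f(0)=\int f=1\ne0$ and $\hat f$ is continuous, $\hat f$ is nonvanishing on some interval $(-\rho,\rho)$; choose a nonnegative Fej\'er-type function $H\in L^1(\R)$ whose Fourier transform $\hat H$ is supported in $(-\rho/2,\rho/2)$. By the local form of Wiener's $1/\hat f$ theorem (regularity of the Fourier algebra $\widehat{L^1(\R)}$) there is $\psi\in\widehat{L^1(\R)}$ with $\psi=1/\hat f$ on $[-\rho/2,\rho/2]$; then $\hat g:=\hat H\psi$ defines $g\in L^1(\R)$, $g\ne0$, with $\hat f\hat g=\hat H$, i.e.\ $f*g=H\ge 0$. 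The difficulty that remains on this route --- and it is again the genuine one --- is that this $g$ need not be nonnegative, so one must exploit the freedom in the choice of $H$ to force $g\ge 0$, equivalently to make $\hat g$ positive-definite.
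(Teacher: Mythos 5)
First, a remark on scope: the paper does not prove this statement at all --- it is imported from Ruzsa and Sz\'ekely \cite{RS83} (where, as the paper notes, it is established for a wide class of locally compact abelian groups), so there is no in-paper proof to compare yours against. I can only assess your argument on its own terms, and as written it proves a special case, not the theorem.

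Your treatment of the case where $f_-$ has compact support is correct, including the quantitative lower bound $(f*g_T)(x)\ge c\,g_T(x)$, and the reduction to producing a finite positive measure $\mu$ with $f*\mu\ge 0$ is also fine. The problem is the passage to general $f$, which is exactly where the content of the theorem lies. For your main route, the needed bound $w*g_T\le c\,g_T$ does not merely resist verification --- it genuinely fails whenever the tail of $f_-$ is not exponentially small. Indeed, writing $W_x:=\int_{x-1}^{x+1}w$, one has $(w*g_T)(x)\ge e^{-1/T}W_x$ while $g_T(x)=e^{-|x|/T}$, so $(w*g_T)(x)/g_T(x)\ge e^{-1/T}W_x\,e^{|x|/T}$, which is unbounded in $x$ for, say, $f_-(y)\asymp y^{-2}$ at infinity; and since $w$ coincides with $f_-$ outside $[-R,R]$, no choice of $R$ removes this tail. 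The fallback of matching the kernel's decay to that of $f_-$ faces two competing constraints: to run the compact-support step you need $g(x-y)/g(x)$ uniformly close to $1$ for $|y|\le R$ (a ``wideness'' condition), while to dominate $w*g$ you need $\int w(y)\,\sup_x\bigl(g(x-y)/g(x)\bigr)\,dy$ to be small, and $\sup_x g(x-y)/g(x)\ge g(0)/g(y)$ is large precisely where $g$ is small. Moreover the reduction to ``$f_-$ has a finite first moment'' that you hope for is not available: a general $L^1$ function has no finite moments, and nothing in the argument produces one.

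Your Fourier route is correct up to the construction of $g\in L^1(\R)$ with $f*g=H\ge 0$ (local invertibility of $\ft{f}$ near the origin plus a Fej\'er-type $H$), but, as you acknowledge, forcing this $g$ to be nonnegative is not a residual technicality --- it is essentially the full content of the theorem, and no mechanism for achieving it is proposed. So both routes stop at the same genuine gap. Given that the paper itself only cites \cite{RS83} for this result, the honest options are either to cite it likewise, or to supply an argument of a different character; the fact that the original proof works on general LCA groups already suggests it is not a wide-kernel smoothing argument of the kind attempted here.
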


In fact this is proved in \cite{RS83} not only
for functions on $\R$, but on a wider class of locally 
compact abelian groups,
which in particular includes also $\R^d$ for every $d \geq 1$.

\begin{proof}[Second proof of \thmref{thmA5}]
Let $f+\Lambda$ be a tiling
at level zero, and suppose to the contrary that 
$\int f$ is nonzero. By replacing 
$f(x)$ with the function
$\Re\big\{(\int f)^{-1}  f(x)\big\}$, 
we may assume that $f$
is real-valued and that $\int f >0$.
So we can use \thmref{thmRS83} to find
 a nonnegative, nonzero $g \in L^1(\R)$
such that $f \ast g$ is nonnegative a.e.
Notice that also the function $f \ast g$ is nonzero, since
we have $\int (f \ast g) = (\int f)(\int g) > 0$.

On the other hand we have
$f \ast \del_\Lam = 0$ a.e., and the measure $\delta_\Lam$ is 
 translation-bounded since $\Lam$ has
bounded density. Using
Fubini's theorem this implies that
\[
(f \ast g) \ast \del_\Lam = 
g \ast (f \ast \del_\Lam) = g \ast 0 = 0 \quad \text{a.e.,}
\]
hence $(f \ast g) + \Lam$
is a tiling at level zero. But this is a contradiction,
since $f \ast g$ is a  nonnegative, nonzero
function in $L^1(\R)$, and  $\Lam$ is a
nonempty set.
\end{proof}

\subsection{The set \texorpdfstring{$\Lambda$}{Lambda} is relatively dense}
We now  move on to the proof of 
 \thmref{thmA6}. First we note that this
theorem \emph{cannot} be deduced based on 
\lemref{lem5.2} above, since 
there exist tilings  $f+\Lam$ 
at level zero such that
$\ft{\del}_\Lam$ is \emph{not} 
a scalar multiple of $\del_0$ in any
neighborhood of the origin; see
\cite[Section 5]{Lev20} for a
 construction of such tilings.

We fix the following terminology: given 
a sequence of measures $\{\mu_n\}$ on $\R$,
we say that the sequence  is 
\emph{uniformly translation-bounded} if there is 
a constant $M>0$ not depending on $n$, such that
$\sup_{x \in \R} |\mu_n|([x,x+1)) \leq M$
 for every $n$.

If $\{\mu_n\}$ is a uniformly translation-bounded sequence of measures
 on $\R$, then we say  that $\mu_n$ \emph{converges vaguely} to a 
measure $\mu$ if for
every continuous, compactly supported function $\varphi$ we have
$\int \varphi \, d\mu_n \to \int \varphi \, d\mu$
(see, for instance, \cite[Section 7.3]{Fol99}).
 In this case, the vague limit $\mu$ 
must also be a translation-bounded measure.
 For a uniformly translation-bounded 
sequence of measures  $\{\mu_n\}$  to converge vaguely,
it is necessary and sufficient that $\{\mu_n\}$  converge
 in the space of tempered distributions.
From any uniformly translation-bounded sequence of 
measures $\{\mu_n\}$  one can extract
a vaguely convergent subsequence $\{\mu_{n_j}\}$.

\begin{proof}[Proof of \thmref{thmA6}]
Let $f+\Lam$ be a tiling  at level zero, and
suppose to the contrary that $\Lam$ is not relatively dense. 
Then for each $r>0$ one can find an open interval $I(r)$
 of length $r$ which is disjoint from $\Lam$.
 By translating  the  interval $I(r)$ we may assume 
that one of its endpoints lies in $\Lam$
(since the set $\Lam$  is nonempty).
Then for arbitrarily large values of 
$r$ the right endpoint of 
$I(r)$ is in $\Lam$, or for arbitrarily large values of 
$r$ the left endpoint is in $\Lam$.
We will assume that the former is the case
(the latter case can be treated similarly).
It follows that there exist two sequences $r_j \to +\infty$ and 
$\lam_j \in \Lam$,  such that for each $j$ the interval
$I_j := (\lam_j - r_j, \lam_j)$ does
not intersect $\Lam$.

Define $\Lam_j := \Lam - \lam_j$. Since $\Lam$ has bounded density,
the measures $\delta_{\Lam_j}$ are uniformly translation-bounded.
By passing to a subsequence if necessary, we may  assume
that $\delta_{\Lam_j}$ converges vaguely to some 
(also translation-bounded) measure $\mu$ on $\R$.
The vague limit $\mu$ is supported on $[0, +\infty)$, since 
$\delta_{\Lam_j}$ vanishes on
$(-r_j, 0)$. Moreover,  each measure $\delta_{\Lam_j}$
is  positive and has a unit mass at the origin, which
implies that $\mu$ is also positive
and has an atom at the origin, of mass at least one.
In particular the measure $\mu$ is nonzero.

On the other hand, since $f+\Lam$ is a tiling (at level zero),
then by \thmref{thm4.5.1} the support of
$\ft{\delta}_\Lambda$ is contained in $\{\hat f=0\} \cup \{0\}$.
Since $\ft{f}$ is a nonzero continuous function, it follows that
there is an open interval $(a,b)$ on which $\ft{\delta}_\Lam$ vanishes. 
In turn this implies that
 $\ft{\delta}_{\Lam_j}$ also vanishes on $(a,b)$
for each $j$. But as the sequence $\ft{\delta}_{\Lam_j}$ converges to
$\ft{\mu}$ in the distributional sense, we obtain
that $\ft{\mu}$ vanishes on $(a,b)$ as well. 

We conclude that $\mu$ is a nonzero, translation-bounded, positive 
 measure on $\R$, supported on $[0, +\infty)$ and
 whose Fourier transform 
$\ft{\mu}$ vanishes on some
open interval $(a,b)$. But 
this contradicts classical results
on boundary values of functions analytic 
in the upper half-plane. To be more concrete:
choose two Schwartz functions  $\varphi, \psi$
such that $\varphi > 0$,
$\supp(\ft{\varphi}) \sbt (-\del,\del)$, $\psi$ is nonnegative,
$\psi$ is
supported on $[0,+\infty)$ and $\int \psi = 1$.
Then 
$h := (\mu \cdot \varphi) \ast \psi$
is a nonzero function belonging to $L^1(\R)$, $\supp(h) \sbt
[0,+\infty)$, and the Fourier transform
$\ft{h} = (\ft{\mu} \ast \ft{\varphi}) \cdot  \ft{\psi}$
is also in $L^1(\R)$ and $\ft{h}$ vanishes on some nonempty
open interval provided that  $\delta$ is small enough.
This contradicts the uniqueness theorem for
functions in the Hardy space $H^1$, see
e.g.\ \cite[Chapter II]{Gar07}.
\end{proof}

% =======================================

\section{Non-periodic tilings by  functions with sparse support}
\label{secSM1}

In this section we establish the existence
of non-periodic tilings $f+\Lam$ of bounded density,
such that the function $f$ has ``sparse'' support
(\thmref{thmA1}).

Recall that
the first example of a tiling $f+\Lam$ such that
$f\in L^1(\R)$, $\Lambda \sbt \R$ has bounded density,
but the set $\Lam$ does \emph{not} have a periodic structure, 
was given in \cite{KL16}.
The proof was based on the implicit function method 
due to Kargaev \cite{Kar82}, and it yields a set
$\Lam$ which is a small perturbation of the integers.
However, in this example the function $f$ is
\emph{analytic}, which implies that  $\supp(f) = \R$.

In order to prove \thmref{thmA1} we will use a different approach,
which is based on two main ingredients. The first one is a
recent construction from \cite{KS20}
of a  new type of \emph{crystalline measures} on $\R$.
The second main ingredient is a result concerning
interpolation of discrete functions by continuous ones with
a sparse spectrum.

\subsection{Crystalline measures}
A tempered distribution $\mu$ on $\R$ satisfying
\begin{equation}
  \label{eqI1.10}
\mu = \sum_{\lam\in \Lam} a_\lam \del_\lam,
\quad
\ft{\mu} = \sum_{s \in S} b_s \del_s
\end{equation}
(i.e.\ both $\mu$ and $\ft{\mu}$ are pure point measures),
where $\Lam$ and $S$ are discrete, closed sets in $\R$,
is called a \emph{crystalline measure} \cite{Mey16}.
This notion plays a role in the 
mathematical theory of quasicrystals, 
i.e.\ atomic arrangements having
a discrete diffraction pattern.

The classical example of a crystalline measure
is $\mu = \delta_\Z$, which satisfies
 $\ft{\mu} = \mu$ by Poisson's summation formula.
More generally, the measure $\delta_\Lam$ is crystalline
for any set $\Lam$  of the form \eqref{eqI2.1}, i.e.\ any set
$\Lam \sbt \R$ having a periodic structure.

There exist also examples of crystalline measures
on $\R$, whose support is \emph{not} contained in any set
$\Lam$ with  a periodic structure. Constructions
of such examples, using different approaches, were
given in the papers \cite{LO16}, \cite{Kol16}, 
\cite{Mey16}, \cite{Mey17}, \cite{RV19}.

Recently, new progress was achieved
by Kurasov and Sarnak \cite{KS20}
who constructed examples of crystalline measures
on $\R$ enjoying some remarkable
 properties,  answering several questions
left  open by the above mentioned papers.
To state the result, we recall the
terminology introduced in \secref{secTUB} above:

\begin{definition}
\label{defTG}
We say that a set $S \sbt \R$ has 
\emph{tempered growth} if there is $N$ such that
\begin{equation}
\label{eqI2.5}
\# (S \cap (-r,r)) = O(r^N), \quad r \to +\infty.
\end{equation}
\end{definition}
We note that if a set $S \sbt \R$ has  tempered growth then
the measure $\del_S$ is a tempered distribution,
 and also the converse is true.

\begin{thm}[{\cite{KS20}}]
\label{thmI1}
There exist
crystalline measures $\mu$ 
of the form \eqref{eqI1.10}
with the following properties:
\begin{enumerate-num}
\item \label{thmI1:i}
$\Lambda$ is a set of bounded density;
\item \label{thmI1:ii}
$a_\lam=1$ for all $\lam\in\Lam$,
i.e.\ we have $\mu = \del_\Lam$;
\item \label{thmI1:iii}
$\Lam$ does not have a periodic structure,
i.e.\ it is not of the form \eqref{eqI2.1};
\item \label{thmI1:iv}
$S$ is a set of tempered growth.
\end{enumerate-num}
\end{thm}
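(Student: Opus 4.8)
The plan is to realize $\Lam$ as the zero set of a stable (Lee--Yang) trigonometric polynomial restricted to a line of irrational slope --- concretely, as the spectrum of (the square root of) the Laplacian on a compact metric graph with incommensurable edge lengths --- and to extract (i)--(iv) from the exact trace formula for such spectra. Fix a finite connected metric graph $G$ which is not a single cycle, with edge lengths $\ell_1,\dots,\ell_E$, $E \ge 2$, that are linearly independent over $\Q$ and, by a suitable generic choice, such that the Laplacian on $G$ with standard Kirchhoff vertex conditions has simple spectrum. Its positive spectrum $\{k_n^2\}$ is the zero set of a secular function $F(k) = \det(I - U(k))$, where $U(k)$ is a fixed unitary matrix whose entries are constants times the monomials $e^{ik\ell_j}$; in particular $F$ is an exponential polynomial all of whose frequencies lie in $\sum_j \Z\ell_j$. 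Take $\Lam := \{k \in \R : F(k) = 0\}$, a discrete set symmetric about the origin, each zero counted once; this one set will serve as the translation set for all the tilings constructed later in the paper.

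Property (i), and the temperedness of $\del_\Lam$, come essentially for free: the number of zeros of $F$ in any unit interval is bounded by a constant depending only on $G$ (a Jensen-- or Tur\'an-type zero-counting bound for exponential polynomials with a fixed finite spectrum), so $\sup_x \#(\Lam \cap [x,x+1)) < \infty$, and hence $\del_\Lam$ is a tempered distribution.

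The heart of the matter is the Fourier side. For $\Im k > 0$ one has $\|U(k)\| < 1$ and $F(k) \neq 0$, so there $\log F(k) = \alpha k + \beta - \sum_{m\ge1}\tfrac1m \operatorname{tr} U(k)^m$, where $\operatorname{tr} U(k)^m$ is a finite sum of terms $(\cdot)\,e^{ik\ell_p}$ ranging over closed paths $p$ of combinatorial length $m$, whose metric length $\ell_p$ lies in $\sum_j \Z_{\ge0}\ell_j$. Taking boundary values as $\Im k \downarrow 0$ --- here the Lee--Yang/unitarity structure of $U$, which makes the total phase of $F$ monotone along $\R$, lets the argument principle be applied cleanly --- identifies $\del_\Lam$ with the derivative of a monotone function and yields the quantum-graph trace formula
\begin{equation*}
\sum_{\lam\in\Lam} h(\lam) \;=\; c_0\,\ft{h}(0)\;+\;\sum_{p} A_p\,\ft{h}(\ell_p),\qquad h\in\mathcal{S}(\R),
\end{equation*}
i.e.\ $\ft{\del}_\Lam = c_0\del_0 + \sum_p A_p\del_{\ell_p}$ with explicit amplitudes $A_p$; since all positive eigenvalues are simple, each $\lam\in\Lam$ enters with coefficient $1$, which gives (ii). Setting $S := \{0\}\cup\{\pm\ell_p\}$, the measure $\ft{\del}_\Lam$ is pure point, and since $S$ is contained in the finitely generated subsemigroup $\sum_j\Z_{\ge0}\ell_j$ of the nonnegative reals together with its reflection, a lattice-point count gives $\#(S\cap(-r,r)) = O(r^{E})$, so $S$ has tempered growth --- this is (iv). I expect this step to be the main obstacle: making the boundary-value/expansion argument rigorous --- convergence, the precise meaning of $\log F$ near the zeros, the identification of the amplitudes --- is exactly where the stability of the underlying polynomial is indispensable, and it is also what forces $\ft{\del}_\Lam$ to be pure point.

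Finally, (iii). If $\Lam$ were of the form \eqref{eqI2.1}, then by Poisson summation $\ft{\del}_\Lam$ would be supported on the finite union of lattices $\bigcup_j \tfrac1{a_j}\Z$, which is a set of bounded density. But when $E \ge 2$ the rational independence of the $\ell_j$ makes $(n_1,\dots,n_E)\mapsto\sum_j n_j\ell_j$ injective on $\Z^E$, so the combinatorial length set $\sum_j\Z_{\ge0}\ell_j$ already has $\gtrsim r^{E-1}\to\infty$ points in each window $[r,r+1)$; since for generic lengths the amplitudes $A_p$ do not vanish and no cancellations occur among paths of equal length, $S = \supp\ft{\del}_\Lam$ inherits this growth and is \emph{not} of bounded density. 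This contradiction establishes (iii), completing the proof sketch.
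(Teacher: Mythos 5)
This theorem is not proved in the paper at all: it is imported verbatim from Kurasov--Sarnak \cite{KS20}, so there is no internal proof to compare against, and what you have written is really a sketch of the construction in the cited source. Your overall route --- realize $\Lambda$ as the zero set of the secular function of a metric graph with rationally independent edge lengths (equivalently, a stable exponential polynomial), and read off $\ft{\delta}_\Lambda$ from the trace formula --- is indeed the Kurasov--Sarnak strategy, and your treatment of (i) via a zero-counting bound for exponential polynomials is fine.

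However, the sketch leaves open precisely the points that constitute the theorem. First, the trace formula as you state it only identifies $\ft{\delta}_\Lambda$ with the periodic-orbit sum as a \emph{tempered distribution}; to conclude that $\ft{\delta}_\Lambda$ is a pure point \emph{measure} (which is what ``crystalline'' requires) you must show that $\sum_{|\ell_p|\le r}|A_p|<\infty$ for every $r$, and this is not automatic --- the number of closed orbits of combinatorial length $m$ grows exponentially, so the absolute convergence of the amplitudes attached to each length is exactly where the stability of $P$ must be used quantitatively, not merely to invoke the argument principle. You flag this as ``the main obstacle'' but do not resolve it, and without it none of (ii), (iv) is established. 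Second, for (ii) you need the zeros of $F$ to be simple; asserting this ``by a suitable generic choice'' is not a proof, since the multiplicities are governed by the fixed unitary $U(k)$ and are not obviously controllable by perturbing the lengths alone. Third, your argument for (iii) hinges on the claim that ``no cancellations occur among paths of equal length'' for generic lengths; but with rationally independent $\ell_j$ the orbits sharing a metric length are exactly those with the same edge-count vector, their amplitudes are products of vertex scattering coefficients determined by the combinatorics of $G$ (independent of the $\ell_j$), and these sums can and do vanish --- so genericity of the lengths buys you nothing here, and the lower bound $\#(S\cap(-r,r))\gtrsim r^{E-1}$ for the actual support of $\ft{\delta}_\Lambda$ is unjustified. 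Each of these three points is a genuine gap; closing them is the content of \cite{KS20}.
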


Actually, the crystalline measures
constructed in \cite{KS20} have even stronger
properties than stated above -- we only mentioned
the properties that will be used in this paper.
In the more recent works
\cite{Mey20}, \cite{OU20}, alternative approaches to
the construction of crystalline measures
with these properties can be found.

\subsection{Interpolation by functions with a sparse spectrum}
We now turn to the second
main ingredient in our proof of \thmref{thmA1}.
It can be described in the context of the classical
uncertainty principle, which states that a nonzero function
$f$ and its Fourier transform $\ft{f}$ cannot  be ``too small''
at the same time. This general principle  has 
many concrete versions, see  \cite{HJ94}. 

In particular, using complex analysis 
one can show that if $f \in L^1(\R)$
has compact support, and if $\ft{f}$ vanishes 
on a set  $S \sbt \R$ satisfying
\begin{equation}
\label{eqI1.6}
\limsup_{r \to +\infty} \frac{\# (S \cap (-r,r))}{r} = + \infty,
\end{equation}
then $f = 0$ a.e. This  fact was used in 
 \cite{LM91}, \cite{KL96} in order
to prove that if a nonzero function
$f \in L^1(\R)$ has  compact support,
and if  $f$ tiles at some level $w$
by a translation set $\Lambda$ 
of bounded density, then
$\Lambda$ must have a periodic structure
(\thmref{thmLM91}).

On the other hand, suppose that $\Om \sbt \R$ is a set 
which contains arbitrarily long intervals
(and so $\Om$ must be unbounded, 
but can be very sparse). Then given any discrete
set $S \sbt \R$ of tempered growth,
there exists a nonzero function  $f \in L^1(\R)$,
$\supp(f) \sbt \Om$, such that
$\ft{f}$ vanishes on $S$. This is a consequence
of the following result:

\begin{thm}
  \label{thmB2}
Let $\Om \sbt \R$ be
 a set which contains arbitrarily long intervals,
and let $S \sbt \R$ be  a set 
of  tempered growth. Then given any values
$\{c(s)\} \in \ell^1(S)$, one
can find  a smooth function 
$f \in L^1(\mathbb R)$,
$\supp(f) \sbt \Om$,
such that $\ft{f}(s)=c(s)$ for all  $s \in S$.
\end{thm}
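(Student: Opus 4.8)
The plan is to build $f$ as a superposition of bumps, one supported in each of a sequence of long intervals inside $\Om$, where the $j$-th bump is designed to interpolate the prescribed values $c(s)$ for those $s \in S$ lying in the $j$-th dyadic-type window. First I would exploit the hypothesis that $S$ has tempered growth: there is $N$ with $\#(S \cap (-r,r)) = O(r^N)$. Split $S$ into blocks $S_j := S \cap \{r_{j-1} \leq |s| < r_j\}$ for a rapidly increasing sequence $r_j$; each block is finite, of size polynomial in $r_j$. Since $\Om$ contains arbitrarily long intervals, pick disjoint intervals $J_j \sbt \Om$ with $|J_j| = L_j$ growing fast enough (to be fixed below), and let $f = \sum_j f_j$ with $\supp(f_j) \sbt J_j$.

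On each $J_j$, the task is a finite interpolation problem: find a smooth $f_j$ supported in $J_j$ with $\ft{f_j}(s) = c(s)$ for $s \in S_j$. This is elementary linear algebra once we fix a finite-dimensional space of test functions — e.g.\ take $g$ a fixed nonzero smooth bump supported in $[0,1]$ and look for $f_j$ in the span of the translates $g(\cdot - k)$, $k = 0, 1, \dots, \#S_j - 1$, rescaled to sit inside $J_j$. The exponentials $e^{-2\pi i s\,\cdot}$ restricted to these translates give a matrix that is a Vandermonde-type matrix in the variables $e^{-2\pi i s}$, scaled by $\ft g$; one checks it is invertible (the $s \in S_j$ are distinct, and one arranges $\ft g$ to be nonvanishing on a suitable range by choosing $g$ appropriately, or by a small dilation argument). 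Solving it produces $f_j$; the main quantitative point is to estimate $\|f_j\|_{L^1}$ and $\|\ft{f_j}\|_\infty$ in terms of $\|c|_{S_j}\|_{\ell^1}$, the cardinality $\#S_j$, and the diameter of $S_j$ — the norm of the inverse of a Vandermonde matrix blows up, but only at a controlled (exponential-in-$\#S_j$, polynomial-in-separation) rate.

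The main obstacle, and the place where the freedom in choosing $\Om$'s intervals is spent, is controlling two competing errors at once: (i) $\sum_j \|f_j\|_{L^1} < \infty$, so that $f \in L^1$ and is nonzero, and (ii) the ``cross terms'' — $\ft{f_j}(s)$ for $s \in S$ \emph{not} in $S_j$ must be summable and, together, not destroy the interpolation on $S_k$ for $k \neq j$. Point (ii) is handled by noting $\ft{f_j}$ is the Fourier transform of a function supported in an interval of length $L_j$, hence $\ft{f_j}$ is entire of exponential type $\sim L_j$ and (after normalizing) decays like $|\ft{f_j}(t)| \lesssim C_j (1 + \dist(t, \text{center of } J_j))^{-M}$ for any $M$, with $C_j$ depending on $\|f_j\|_{L^1}$ and on how many derivatives of the bump $g$ we used. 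Since $S$ has only polynomial growth, choosing $L_j$ and the gaps between consecutive $J_j$'s large enough forces $\sum_{k \neq j}\sum_{s \in S_k} |\ft{f_j}(s)|$ to be as small as we like. One then sets up the interpolation as a fixed point: let $T$ be the (bounded, small-norm) operator on $\ell^1(S)$ sending a target to the vector of cross-contributions, solve $(I + T)^{-1}$ by Neumann series, and read off the corrected coefficients. Finally, smoothness of $f$ is automatic since each $f_j$ is smooth and the supports $J_j$ are disjoint and locally finite; and $f \not\equiv 0$ because, e.g., one may always arrange $c(s_0) \neq 0$ for some $s_0$, or simply note the construction is nontrivial whenever $\{c(s)\}$ is.
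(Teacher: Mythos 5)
Your overall architecture (bumps placed in long intervals of $\Om$, a main term plus small cross terms, and a Neumann-series inversion on $\ell^1(S)$) matches the paper's, but two steps in the middle do not work as stated. The first and most serious is the cross-term estimate. You claim that $\ft{f_j}(t)$ decays like $(1+\dist(t,\text{center of }J_j))^{-M}$ because $f_j$ is supported in an interval of length $L_j$; this conflates the spatial and frequency variables. The support of $f_j$ controls the exponential type of $\ft{f_j}$ off the real axis, not its decay on $\R$; on $\R$ the decay of $\ft{f_j}(t)$ is governed by the smoothness and the \emph{width of the individual bumps}, and it is decay in $|t|$, not decay away from the spatial location of $J_j$. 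Since your $f_j$ is a combination of translates of one fixed (rescaled) bump $g$, its transform is a scaled copy of $\ft{g}$ times a trigonometric polynomial, and there is no mechanism forcing $\ft{f_j}(s)$ to be small for $s$ in \emph{other} blocks --- in particular for $s$ near the origin, where $\ft{g}$ is of order $1$ and the trigonometric polynomial can be as large as the sum of the solved coefficients. Pushing $J_j$ farther out or making it longer changes only a phase and a dilation and does not help. The paper's construction avoids this by \emph{modulating}: it uses one bump per point $s_j\in S$, namely $f_j(x)=\Phi_{r_j}(x-\tau_j)\,e^{2\pi i s_j x}$, so that $\ft{f_j}(t)=\ft{\Phi}(r_j(t-s_j))\,e^{-2\pi i\tau_j(t-s_j)}$ equals $1$ at $s_j$ and is crushed at every other $s_k$ by taking the dilation $r_j$ large, with $\sum_{k\ne j}|s_k-s_j|^{-p}<\infty$ supplied by the tempered-growth hypothesis (\lemref{lemC1}).

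The second gap is the conditioning of your Vandermonde systems. Tempered growth imposes no separation on $S$: two points of a block $S_j$ may be arbitrarily close (so the nodes $e^{-2\pi i s}$ nearly coincide and the inverse norms are unbounded over $j$), or may differ by an integer (so the matrix is singular outright --- distinctness of the $s$ does not give distinctness of $e^{-2\pi i s}$; your ``small dilation'' remark would need to be made precise and uniform in $j$). Without a uniform bound on these inverse norms you cannot conclude $\sum_j\|f_j\|_{L^1}<\infty$ from $\{c(s)\}\in\ell^1(S)$. The paper sidesteps the issue entirely: each ``block'' is a single point, the diagonal entry is exactly $1$, $\|f_j\|_{L^1}=\|\Phi\|_{L^1}$ for all $j$, and the only inversion needed is of an operator within $\eps$ of the identity on $\ell^1$.
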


If we call $\supp(f)$ the ``spectrum'' of the
function $\ft{f}$, then the result means that 
every discrete function in $\ell^1(S)$
can be interpolated
by a continuous function (the Fourier transform
of an $L^1$ function) with spectrum
in $\Om$. We refer
the reader to \cite{OU16} where the problem of
interpolation by functions with a given spectrum
is discussed in detail.

The approach that we use to prove  \thmref{thmB2} is
 inspired by \cite[Section 3]{OU09}.

\subsection{Proof of \thmref{thmB2}}
We begin with a simple lemma.

\begin{lem}
  \label{lemC1}
Let $S \sbt \R$ be a discrete set of  tempered growth, and
let $N$ be a sufficiently large number such that 
\eqref{eqI2.5} holds. Then for any $t \in \R \setminus S$ and any
$p>N$ we have
\begin{equation}
\label{eqC1.1}
\sum_{s \in S} |s - t|^{-p} < +\infty.
\end{equation}
\end{lem}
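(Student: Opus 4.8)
The plan is to bound the series in \eqref{eqC1.1} by chopping the set $S$ into dyadic annuli centered at the point $t$ and summing a geometric series.

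First, since $S$ is a discrete closed subset of $\R$, it has no finite accumulation point, so from $t \notin S$ I can fix a number $\del > 0$ with $S \cap (t-\del, t+\del) = \emptyset$. Consequently every $s \in S$ satisfies $|s-t| \ge \del$, and in particular the region near $t$ contributes nothing to the sum. Next I would rewrite the hypothesis \eqref{eqI2.5}, which counts points of $S$ in an interval centered at the origin, as a bound centered at $t$: since $S \cap \{|s-t| < R\} \sbt S \cap (-(|t|+R),\, |t|+R)$, the estimate \eqref{eqI2.5} together with the local finiteness of $S$ yields a constant $C = C(S,t,N)$ such that
\[
\#\bigl( S \cap \{\, |s-t| < R \,\} \bigr) \le C R^{N} \qquad \text{for all } R \ge \del .
\]

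Then I would split $S = \biguplus_{k \ge 1} S_k$, where $S_k := \{\, s \in S : 2^{k-1}\del \le |s-t| < 2^{k}\del \,\}$. On $S_k$ one has $|s-t|^{-p} \le (2^{k-1}\del)^{-p}$, while $\#(S_k) \le \#\bigl(S \cap \{|s-t| < 2^k\del\}\bigr) \le C (2^k\del)^{N}$ by the displayed bound. Summing over $k$ gives
\[
\sum_{s \in S} |s-t|^{-p} \;\le\; \sum_{k \ge 1} C\,(2^k\del)^{N} (2^{k-1}\del)^{-p} \;=\; C\, 2^{p}\del^{\,N-p} \sum_{k \ge 1} 2^{k(N-p)},
\]
and the last series converges because $p > N$, which establishes \eqref{eqC1.1}.

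There is no real obstacle in this argument; the only points requiring a little care are that the counting hypothesis is stated for intervals centered at the origin, so one must pass to annuli centered at $t$ at the cost of a constant depending on $t$, and that the hypothesis $t \notin S$ is used (via discreteness) precisely to control the annulus $|s-t| < \del$, which could otherwise produce an infinite contribution.
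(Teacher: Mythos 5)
Your proof is correct. It rests on the same underlying idea as the paper's argument -- converting the counting hypothesis $n(r)=\#(S\cap(-r,r))=O(r^N)$ into convergence of $\sum|s-t|^{-p}$ for $p>N$ -- but the implementation differs: the paper first translates so that $t=0$ and then writes $\sum_{|s|\le R}|s|^{-p}$ as a Stieltjes integral $\int_0^R r^{-p}\,dn(r)$, integrating by parts to get $n(R)R^{-p}+p\int_0^R n(r)r^{-p-1}\,dr$, which stays bounded as $R\to\infty$; you instead decompose $S$ into dyadic annuli about $t$ and sum a geometric series with ratio $2^{N-p}<1$. The two routes are essentially interchangeable here; your dyadic version avoids the (minor) technicalities of Stieltjes integration and of choosing $R$ away from the jumps of $n$, while the paper's version gives the slightly more precise quantitative form of the tail. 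You are also right to flag the two points of care -- recentering the counting bound at $t$ at the cost of a constant depending on $t$, and using $t\notin S$ together with the fact that $S$ is closed and discrete (hence locally finite, with no finite accumulation points) to obtain the initial gap $\delta>0$; the paper uses the same facts implicitly when it asserts that $n(r)$ vanishes near the origin.
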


\begin{proof}
The condition \eqref{eqI2.5} remains valid 
if we replace $S$ with $S - t$, so we may assume
that $t=0$. The  function $n(r) := \# (S \cap (-r,r))$
 is then  a step function vanishing near the origin. For each $R>0$
which is not a jump discontinuity point of $n(r)$, we have
\begin{equation}
\label{eqC1.2}
\sum_{|s| \leq R} |s|^{-p} = \int_{0}^{R} \frac{dn(r)}{r^{p}} 
= \frac{n(R)}{R^p} +  p \int_{0}^{R} \frac{n(r)}{r^{p+1}} dr
\end{equation}
which follows from integration by parts. But as
$n(r)=O(r^N)$ and $p>N$,  the right-hand side of
\eqref{eqC1.2} remains bounded
as $R \to \infty$. The condition
\eqref{eqC1.1} is thus established.
\end{proof}

\begin{proof}[Proof of \thmref{thmB2}]
We suppose that  $\Om \sbt \R$ is a set which contains arbitrarily long intervals,
and that $S \sbt \R$ is a discrete set of  tempered growth. 
We also choose and fix some enumeration $\{s_j\}$ of the set $S$.

Let $\Phi$ be an infinitely smooth function on $\R$
 supported on the interval $[-1,1]$, and such that
$\int \Phi = 1$. For each $r>0$ we denote
$\Phi_r(x) := r^{-1} \Phi(r^{-1}x)$. Define
\[
f_j(x) := \Phi_{r_j} (x - \tau_j) \, e^{2 \pi i s_j x}
\]
where the numbers $r_j > 0$ and $\tau_j \in \R$ will
be determined later on. Then we have
\[
\ft{f}_j(t) = \ft{\Phi} (r_j(t-s_j)) \, e^{-2 \pi i \tau_j (t-s_j)}.
\]
In particular, $\ft{f}_j(s_j) =  \ft{\Phi} (0) = 1$.

Let $N$ be a sufficiently large number such that 
\eqref{eqI2.5} holds, and let $p > N$. Since
$\ft{\Phi}$ is a Schwartz function, there is a constant
$C=C(\Phi,p)>0$ such that 
$|\ft{\Phi}(t)| \leq C |t|^{-p}$ for every nonzero $t$.
For $j$ fixed, we have
\begin{equation}
\label{eqC1.4}
\sum_{k \neq j} |\ft{f}_j(s_k)| = 
\sum_{k \neq j} |\ft{\Phi}(r_j(s_k-s_j))| \leq
C r_j^{-p} \sum_{k \neq j} |s_k-s_j|^{-p}.
\end{equation}
If we use \lemref{lemC1} with the set $\{s_k : k \neq j\}$
and $t = s_j$, the lemma yields that the sum on the right-hand
side of  \eqref{eqC1.4} converges.
We thus see that given any $\eps > 0$, we can choose
$r_j = r_j(S,\Phi,p,\eps) >0$ large enough such that
the right-hand side of  \eqref{eqC1.4} does not exceed $\eps$.
It follows that we can choose the numbers $r_j$ in
such a way that, no matter how the numbers $\tau_j$ 
are chosen, we have
\begin{equation}
\label{eqC1.6}
M := \sup_{j}
\sum_{k \neq j} |\ft{f}_j(s_k)|  \leq \eps.
\end{equation}

To any sequence $\mathbf{b} = \{b_j\}$ belonging to 
$\ell^1$, we associate a corresponding sequence 
$\mathbf{c} = \{c_k\}$ defined by
\[
c_k = \sum_{j} \ft{f}_j(s_k) b_j = 
b_k + \sum_{j \neq k} \ft{f}_j(s_k) b_j.
\]
It follows from 
\eqref{eqC1.6} that the mapping $\mathbf{b} \mapsto \mathbf{c}$
defines a bounded linear operator $A: \ell^1 \to \ell^1$ such that
$\|A - I\| = M \leq \eps$, where $I$ is the identity operator.
If we choose $\eps < 1$ then this implies
that $A$ is invertible in $\ell^1$.

Now suppose that we are given a 
sequence $\mathbf{c} = \{c_k\} \in \ell^1$.
Let $\mathbf{b} = \{b_j\}  \in \ell^1$ be the solution
of the equation $A \mathbf{b} = \mathbf{c}$, and define
\begin{equation}
\label{eqC1.8}
f(x) := \sum_{j} b_j f_j(x).
\end{equation}
We observe that 
\[
\|f_j\|_{L^1(\R)} = \|\Phi\|_{L^1(\R)}, \quad \sum |b_j| < \infty,
\]
which implies that the series \eqref{eqC1.8} converges
in $L^1(\R)$. It follows that
\begin{equation}
\label{eqC1.9}
\ft{f}(t) = \sum_{j} b_j \ft{f}_j(t),
\end{equation}
where the series \eqref{eqC1.9} converges
uniformly on $\R$. In particular we have
\begin{equation}
\label{eqC1.10}
\ft{f}(s_k) = \sum_{j} b_j \ft{f}_j(s_k) = c_k
\end{equation}
for each $k$.

Finally we observe that $f_j$ is supported on
the interval $I_j := [\tau_j - r_j, \tau_j + r_j]$.
Since the  $r_j$'s were chosen in a way that
does not depend on the values of the $\tau_j$'s,
we can use
the assumption that
$\Om$  contains arbitrarily long intervals
in order  to choose each $\tau_j$ such that
the interval $I_j$ lies  in $\Om$.
Moreover, by choosing these intervals e.g.\ such
that $\dist(I_j, I_k) \geq 1$ $(j \neq k)$
this implies that
$\supp(f) \sbt \Om$ and ensures that $f$ is
infinitely smooth.
Thus $f$ has all the required properties,
and \thmref{thmB2} is proved.
\end{proof}

% =======================================

\subsection{Proof of \thmref{thmA1}}
Now we can finish the proof of
\thmref{thmA1} based on the two results
above, namely,  \thmref{thmI1} and  \thmref{thmB2}.

Let $\mu = \del_\Lam$
be the crystalline measure given
by \thmref{thmI1}, that is, $\Lambda \sbt \R$
 is a set of bounded density but not of
a periodic structure, such that
 $\ft{\del}_\Lam$ is a pure point measure,
$\ft{\del}_\Lam = \sum_{s \in S} b_s \del_s$,
where $S \sbt \R$
 is a set of tempered growth.

Since the set $S$ is discrete and closed, 
it follows that there is $a>0$ such that we have
$\hat\delta_\Lambda=c\cdot\delta_0$
in $(-a,a)$. Moreover,  we must have $c>0$
due to \lemref{lem5.2}. 
By rescaling the set $\Lam$ if needed, we may 
 assume that   $\hat\delta_\Lambda=\delta_0$ in $(-a,a)$.
In particular, 
this means that the set $S$ contains the origin.

Now suppose that we are given a scalar $w$, and a set
$\Om \sbt \R$ which contains arbitrarily long intervals.
Let $\xi$ be any real number, $\xi \notin S$,
and define 
$S' := S \cup \{\xi\}$. Then also the set $S'$
has tempered growth. We 
 prescribe the values $c(\xi) = 1$, $c(0) = w$, 
and $c(s) = 0$ for all $s \in S \setminus \{0\}$,
then these values belong to $\ell^1(S')$.
Hence using \thmref{thmB2} we can find a
smooth function  $f \in L^1(\mathbb R)$,
$\supp(f) \sbt \Om$,
such that $\ft{f}(s)=c(s)$ for all  $s \in S'$.
This means that we have $\ft{f}(0)=w$ and
$\ft{f}(s)=0$ for all $s \in S \setminus \{0\}$.
It also implies that $\ft{f}(\xi) \neq 0$,
which ensures that the function $f$ is nonzero.

We conclude that  $\ft{f}$ vanishes
on the set $\supp(\hat\delta_\Lambda) \setminus \{0\}$,
i.e.\ the condition \eqref{eqI3.1} is satisfied,
and moreover we have $\ft{\delta}_\Lam(\{0\}) \cdot \int f = w$.
Due to \thmref{thm4.5.2} this implies
that $f+\Lambda$ is a tiling at level $w$, and the
proof is thus complete.
\qed

% =======================================

\section{Tilings of unbounded density}
\label{secUB1}

In this section we construct
examples of tilings $f + \Lam$
such that the set $\Lam$ does
\emph{not} have bounded density
(\thmref{thmA3}).
We are not aware of any  previous
example of such a tiling  in the literature.
Moreover, in our example the set
$\Lam$ has tempered growth
(see \defref{defTG}) and the
function $f$ is in the Schwartz class.
We will also show
that in any such a tiling, the function $f$ must have
zero integral, no matter what  the 
level $w$ of the tiling is (\thmref{thmA9}).

\subsection{Translation sets with unbounded density}
We will construct  tilings $f + \Lam$ such that
the translation set $\Lam \sbt \R$  has the form
 \begin{equation}
\label{eqR7.30.12}
\Lam = \bigcup_{n \in \Z} \Lam_n,
\quad
\Lam_n \sbt (n-\eps, n+\eps), \quad
\# \Lam_n = n^2+1,
\end{equation}
where $\eps >0$ is an arbitrarily small number.
Then the condition \eqref{eqI10.1} is not satisfied
and $\Lam$ does not have bounded density.
On the other hand it follows from
\eqref{eqR7.30.12}  that $\#(\Lambda\cap(-r,r))  = O(r^3)$ 
as $r\to +\infty$, so
the set $\Lam$ has tempered growth and
$\delta_\Lambda$ is a tempered  distribution on $\R$.
We will prove the following theorem:

 \begin{thm}
\label{thmR7.43}
Given any $\eps >0$ there is a set
$\Lam \sbt \R$ of the form \eqref{eqR7.30.12},
such that 
 \begin{equation}
\label{eqR7.30.9}
\ft{\delta}_\Lam = \delta_0 - \frac{\delta''_0}{4\pi^2}
\quad \text{in $(-\half, \half)$.}
\end{equation}
 \end{thm}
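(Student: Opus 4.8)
The plan is to realise the ``ideal'' multiset in which each integer $n$ carries multiplicity $n^{2}+1$, and then to perturb it, one cluster at a time, into a genuine set of distinct points while keeping the identity \eqref{eqR7.30.9} intact. The starting observation is that the measure $\mu_{0}:=\sum_{n\in\Z}(n^{2}+1)\,\del_{n}$ already satisfies \eqref{eqR7.30.9}: since $\widehat{\del_{\Z}}=\del_{\Z}$ by Poisson summation, and since multiplication by $x^{2}$ corresponds to $-\tfrac{1}{4\pi^{2}}\tfrac{d^{2}}{dt^{2}}$ on the Fourier side, one gets $\widehat{\mu_{0}}=\sum_{k\in\Z}\bigl(\del_{k}-\tfrac{1}{4\pi^{2}}\del_{k}''\bigr)$, which on $(-\half,\half)$ is exactly $\del_{0}-\tfrac{\del_{0}''}{4\pi^{2}}$. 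Thus it is enough to choose, for each $n$, a set $\Lam_{n}$ of $n^{2}+1$ distinct points inside $(n-\eps,n+\eps)$ so that the signed measure $\nu:=\del_{\Lam}-\mu_{0}=\sum_{n}\bigl(\del_{\Lam_{n}}-(n^{2}+1)\del_{n}\bigr)$ satisfies $\widehat{\nu}=0$ on $(-\half,\half)$; here $\del_{\Lam}$ and $\mu_{0}$ are tempered distributions because $\Lam$ and $\Z$ have tempered growth, so $\widehat{\nu}$ makes sense.

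I would fix a symmetric base configuration and then translate each cluster rigidly. Let $\Lam_{n}^{0}$ consist of $n^{2}+1$ equally spaced points, symmetric about $n$, contained in $(n-w_{n},n+w_{n})$, where $w_{n}>0$ is chosen small and rapidly decaying, say $w_{n}=c\rho^{n}$ with $\rho<1$ and $c$ small. Writing $\widehat{\del_{\Lam_{n}^{0}}}(t)=e^{-2\pi i n t}\bigl((n^{2}+1)+g_{n}(t)\bigr)$, the symmetry forces the first moment of each cluster to vanish, so that $g_{n}$ is an even entire function with $g_{n}(0)=g_{n}'(0)=0$ and $\|g_{n}\|_{C^{1}(-\half,\half)}\le C(n^{2}+1)w_{n}^{2}$. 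Hence the ``defect''
\[
E_{0}(t):=\widehat{\bigl(\del_{\Lam^{0}}-\mu_{0}\bigr)}\Big|_{(-\half,\half)}=\sum_{n}e^{-2\pi i n t}g_{n}(t)
\]
is a genuine $C^{1}$ function on $(-\half,\half)$, with $\|E_{0}\|_{C^{1}}$ as small as we please and $E_{0}(0)=E_{0}'(0)=0$; the vanishing at the origin is the key property, since it puts $E_{0}(t)/t$ into $L^{2}(-\half,\half)$.

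Now for the correction, I would set $\Lam_{n}:=\Lam_{n}^{0}+s_{n}$, a rigid translation, so that distinctness of the points of $\Lam_{n}$ and the inclusion $\Lam_{n}\sbt(n-\eps,n+\eps)$ are automatic as long as $|s_{n}|+w_{n}<\eps$. Writing $s_{n}=\sigma_{n}/(n^{2}+1)$, the crucial gain is that shifting a cluster of $n^{2}+1$ points rigidly has an effect on $\widehat{\nu}$ amplified by the factor $n^{2}+1$, so that
\[
\widehat{\nu}=E_{0}\;-\;2\pi i t\sum_{n}\sigma_{n}e^{-2\pi i n t}\Bigl(1+\tfrac{g_{n}(t)}{n^{2}+1}\Bigr)\;+\;R(\sigma),
\]
where the remainder $R(\sigma)$ is $C^{1}$ in $t$, vanishes at $t=0$, and is quadratically small in $\|\sigma\|_{\ell^{2}}$. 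Since $\{e^{-2\pi i n t}\}_{n\in\Z}$ is an orthonormal basis of $L^{2}(-\half,\half)$, the linear operator in $\sigma$ above is a small perturbation of ``multiply by $-2\pi i t$ after the inverse Fourier isometry'', hence a bounded isomorphism onto $\{h\in L^{2}(-\half,\half):h(t)/t\in L^{2}(-\half,\half)\}$, with bounded inverse $L^{-1}$. The equation $\widehat{\nu}=0$ on $(-\half,\half)$ thus becomes the fixed-point equation $\sigma=-L^{-1}\bigl(E_{0}+R(\sigma)\bigr)$, to which the contraction mapping principle applies (this is the point where a variant of Kargaev's implicit function method enters). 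It produces $\sigma\in\ell^{2}$ with $\|\sigma\|_{\ell^{2}}\le C\|E_{0}\|_{C^{1}}$, whence $|s_{n}|=|\sigma_{n}|/(n^{2}+1)$ is tiny; the resulting $\Lam:=\bigcup_{n}(\Lam_{n}^{0}+s_{n})$ then has the form \eqref{eqR7.30.12} and satisfies \eqref{eqR7.30.9}.

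I expect the main obstacle to be essentially technical: carefully tracking the convergence of all the series in the appropriate topologies, verifying that the formally computed $\widehat{\nu}$ is indeed the distributional Fourier transform of $\nu$, that $\sigma\mapsto\widehat{\nu}(\sigma)$ is smooth enough for a contraction argument, and --- most importantly --- that $E_{0}$ and the nonlinear remainder both land in the range of the linearisation, which is precisely what is guaranteed by the centering (vanishing first moments) of the clusters. Conceptually, the only idea that must be got right is to perturb the clusters \emph{rigidly} rather than perturbing individual points: rigid shifts make distinctness free and, because each cluster has $\sim n^{2}$ points, amplify the leverage by exactly the factor $n^{2}+1$ that is divided out in $s_{n}=\sigma_{n}/(n^{2}+1)$, so that an $\ell^{2}$ amount of correction suffices and one never has to fight the shrinking gaps inside the clusters.
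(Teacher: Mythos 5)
Your proposal is correct in outline and rests on the same engine as the paper --- Kargaev's implicit function method with one real parameter per cluster, followed by a contraction argument --- but the implementation is genuinely different. The paper does not shift pre-built symmetric clusters: it takes $\Lam_n$ to be a one-sided, non-uniformly spaced configuration obtained by rescaling a fixed step function $\chi_{n^2+1}$ (designed so that $\chi_k' = k\del_0 - \sum_{j=1}^{k}\del_{x_j}$ at its jump points $x_j$, and so that $\int x\chi_k\,dx = O(1/k)$, which is the paper's version of your factor-$(n^2+1)$ leverage), and the free parameter is the cluster's \emph{width} $\alpha_n$ rather than its position. The fixed point is then found in $\ell^\infty$, and the degeneracy at $t=0$ is sidestepped entirely by aiming at the target sequence $\beta_n=(-1)^n r$, whose Fourier series is supported on $\Z+\half$ and hence vanishes on $(-\half,\half)$; no division by $t$ ever occurs. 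Your route instead kills the zeroth-order obstruction by centering the clusters (vanishing first moments make $E_0=O(t^2)$) and then inverts multiplication by $-2\pi i t$ on the Fourier side in $\ell^2$, which is exactly where the moment condition is needed. Each version buys something: yours makes distinctness of the $n^2+1$ points free (a rigid shift cannot merge points), whereas the paper must keep $|\alpha_n|$ bounded away from zero, which its $\ell^\infty$ fixed point delivers via $|\alpha_n-\beta_n|\le\eps r$; the paper's version, in turn, never has to identify the range of a non-surjective linearisation or verify that the error terms land in it.

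Two details you should nail down. First, you need the solution $\sigma$ to be \emph{real} so that the shifts $s_n$ are real: this works because $g_n$ is real and even (by the symmetry of $\Lam_n^0$), so $E_0$, $R(\sigma)$ and the linearisation all preserve the Hermitian symmetry $h(-t)=\overline{h(t)}$, whose Fourier coefficients are real; the paper builds this reality into its Banach spaces $X$ and $Y$ explicitly, and you should do the same. Second, $w_n=c\rho^{n}$ should read $c\rho^{|n|}$ (any choice with $\sum_n (1+|n|)(n^2+1)w_n^2$ finite and small suffices); this is what makes $\|E_0\|_{C^1}$ small and, together with $|s_n|\le\|\sigma\|_{\ell^2}$, keeps each $\Lam_n$ inside $(n-\eps,n+\eps)$.
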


In order to better understand what is behind 
condition \eqref{eqR7.30.9}, consider
the measure $\mu$ on $\R$ which assigns the mass $n^2+1$ to
each point $n \in \Z$. Using Poisson's summation formula
one  can check that the Fourier transform 
of $\mu$ is given by 
$\ft{\mu} = \sum_{k\in \Z} (\delta_k - (4\pi^2)^{-1} \delta''_k)$.
\thmref{thmR7.43} now says that  one can
``redistribute'' the mass $n^2+1$ assigned at each
point $n \in \Z$ into equal  unit masses at $n^2+1$ distinct
 points of a set $\Lam_n$ contained 
in a small neighborhood of $n$,
in such a way that the Fourier transform
of the new measure $\del_\Lam$ thus obtained 
remains unchanged  on the interval $(-\half, \half)$.

The role of condition \eqref{eqR7.30.9} in the tiling problem
is clarified by the following lemma:

 \begin{lem}
\label{lemR7.30}
Let $\Lam \sbt \R$ be a set of
tempered growth, and suppose that 
there is $a>0$ such that 
\begin{equation}
\label{eqR7.33}
\supp(\hat\delta_\Lambda) \cap (-a,a) =  \{0\}.
\end{equation}
Then given any scalar $w$ one can find
a nonzero Schwartz function $f$,
 such that $f+\Lambda$ is a tiling at level $w$.
 \end{lem}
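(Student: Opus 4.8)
The plan is to pass to the Fourier side, where the tiling condition becomes a multiplicative identity, and then to satisfy that identity by prescribing finitely many Taylor coefficients of $\ft{f}$ at the origin.

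First I would record the Fourier‑analytic reformulation of the tiling relation, being a little careful since $\Lam$ is here assumed only to have tempered growth rather than bounded density. Still, $\del_\Lam$ is a tempered distribution, hence so is $\ft{\del}_\Lam$. If $f$ is a Schwartz function, then $f \ast \del_\Lam = \sum_{\lam\in\Lam} f(x-\lam)$ converges absolutely and locally uniformly --- because $|f(y)| = O(|y|^{-M})$ for every $M$ while $\#(\Lam\cap(-r,r)) = O(r^N)$ for some $N$ --- and defines a $C^\infty$ function of at most polynomial growth, hence a tempered distribution whose Fourier transform is $\ft{f}\cdot\ft{\del}_\Lam$. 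Therefore, exactly as in the bounded‑density case, for a Schwartz function $f$ the assertion that $f+\Lam$ is a tiling at level $w$ is equivalent to
\[
\ft{f}\cdot\ft{\del}_\Lam = w\cdot\del_0 .
\]
So it suffices to produce a nonzero Schwartz function $f$ satisfying this identity.

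Next I would use the hypothesis \eqref{eqR7.33}. The restriction of $\ft{\del}_\Lam$ to the open interval $(-a,a)$ is a distribution whose support is the single point $0$ (and $0$ really lies in $\supp(\ft{\del}_\Lam)$, since otherwise the intersection in \eqref{eqR7.33} would be empty). By the structure theorem for distributions supported at a point, there are an integer $m\ge 0$ and constants $c_0,\dots,c_m$ with $c_m\neq 0$ such that $\ft{\del}_\Lam = \sum_{k=0}^{m} c_k\,\del_0^{(k)}$ in $(-a,a)$. Now fix $0<a'<a$ and look for $\ft{f}$ of the form of a smooth function $g$ supported in $(-a',a')$. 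Since $\supp(g)\sbt[-a',a']\sbt(-a,a)$, the product $g\cdot\ft{\del}_\Lam$ is supported in $\supp(g)\cap\supp(\ft{\del}_\Lam)\sbt\{0\}$, and as it is supported inside $(-a,a)$ it coincides there --- hence globally --- with $g\cdot\sum_{k=0}^{m}c_k\,\del_0^{(k)}$; the Leibniz rule $g\cdot\del_0^{(k)} = \sum_{j=0}^{k}(-1)^j\binom{k}{j}g^{(j)}(0)\,\del_0^{(k-j)}$ then gives
\[
g\cdot\ft{\del}_\Lam = \sum_{k=0}^{m}c_k\sum_{j=0}^{k}(-1)^j\binom{k}{j}g^{(j)}(0)\,\del_0^{(k-j)} .
\]
Equating the coefficient of $\del_0^{(l)}$ here with that of $w\cdot\del_0$, for $l=m,m-1,\dots,0$ in turn, yields a triangular linear system for the numbers $g^{(0)}(0),\dots,g^{(m)}(0)$ whose diagonal entries are nonzero multiples of $c_m$; it therefore has a (unique) solution, namely $g^{(j)}(0)=0$ for $j<m$ and $g^{(m)}(0)=(-1)^m w/c_m$. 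Choosing $g(t):=\frac{(-1)^m w}{c_m\,m!}\,t^m\chi(t)$ with $\chi$ a fixed smooth function that equals $1$ near $0$ and is supported in $(-a',a')$ (in the degenerate case $w=0$ take instead any nonzero smooth function supported in $(-a',a')\setminus\{0\}$, which vacuously satisfies the coefficient conditions and gives $g\cdot\ft{\del}_\Lam=0$), we obtain a nonzero Schwartz function $g$ with $g\cdot\ft{\del}_\Lam=w\cdot\del_0$. Its inverse Fourier transform $f$ is then a nonzero Schwartz function with $\ft{f}=g$, so by the reformulation above $f+\Lam$ is a tiling at level $w$.

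Most of this is routine. The steps I expect to need the most care are the verification that the Fourier reformulation of the tiling relation remains valid when $\Lam$ has only tempered growth --- which is why the convergence and polynomial‑growth estimate for $f\ast\del_\Lam$ should be spelled out --- and the bookkeeping in the triangular system; the latter, however, presents no real difficulty once one observes that the top coefficient $c_m$ is nonzero.
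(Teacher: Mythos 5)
Your proposal is correct and follows essentially the same route as the paper: both invoke the structure theorem to write $\ft{\delta}_\Lambda=\sum_{k=0}^{m}c_k\,\delta_0^{(k)}$ on $(-a,a)$ with $c_m\neq 0$, and then choose $\ft{f}$ smooth and compactly supported in $(-a,a)$ with $\ft{f}^{(j)}(0)=0$ for $j<m$ and $\ft{f}^{(m)}(0)=(-1)^m w/c_m$. The paper packages this by taking $\ft{f}(t)=\const\cdot t^m\ft{\varphi}(t)$ so that $t^m\ft{\delta}_\Lambda=\const\cdot\delta_0$ does the work in one line, whereas you solve the equivalent triangular system via the Leibniz rule; your explicit justification of the Fourier reformulation under the tempered-growth hypothesis is a welcome addition that the paper leaves implicit.
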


\begin{proof}
It is well-known that a distribution supported by the origin is a finite linear combination of derivatives
of $\delta_0$ (see \cite[Theorem 6.25]{Rud91}). Hence
\eqref{eqR7.33} implies that
\begin{equation}
\label{eqR7.41}
\ft{\delta}_\Lam = \sum_{j=0}^{n} c_j \delta_0^{(j)}
\quad \text{in $(-a, a),$}
\end{equation}
and we may assume
that the highest order coefficient $c_n$ is nonzero.
It follows that
\begin{equation}
\label{eqR7.44}
t^n \, \ft{\del}_\Lam(t) = c_n n! (-1)^n  \cdot \del_0(t)
\quad \text{in $(-a, a)$}
\end{equation}
(see e.g.\ \cite[Section 9.1, Exercise 3]{Fol99}).

Let $\varphi$ be a nonzero Schwartz function,
$\supp(\ft{\varphi})
\sbt (-a, a)$, $\ft{\varphi}(0)=w$. Then 
\begin{equation}
\label{eqR7.42}
f(x) := \frac{\varphi^{(n)}(x)}{c_n n!(-2\pi i)^n}
\end{equation}
is also a nonzero Schwartz function.
We claim that $f+ \Lam$ is a tiling at level $w$. 
Indeed, 
\begin{equation}
\label{eqR7.46}
\ft{f}(t) \, \ft{\delta}_\Lam(t) = 
 \frac{\ft{\varphi}(t)  t^n}{c_n n! (-1)^n} 
\cdot \ft{\delta}_\Lam(t) = 
\ft{\varphi}(t) \, \delta_0(t) = w \cdot \delta_0(t),
\end{equation}
where in the second equality we used
\eqref{eqR7.44}. This implies that
 $f \ast \delta_\Lam=w$ as needed.
\end{proof}

Since condition \eqref{eqR7.30.9} implies \eqref{eqR7.33},
we see that \thmref{thmA3} is a consequence
of \thmref{thmR7.43} and \lemref{lemR7.30}.
It therefore remains to prove \thmref{thmR7.43}.

\subsection{Kargaev's implicit function method}
In order to prove
\thmref{thmR7.43} we will use a variant of Kargaev's 
implicit function method \cite{Kar82}.
See also \cite{KL16}, \cite{Lev20} 
for applications of the method in the
construction of translational tiling examples.

The proof will be done in several steps.

\subsubsection{} 
For each $k=1,2,3,\dots$ let
 $\chi_k$ be the function on $\R$ defined by
 \begin{equation}
\label{eqR20.1}
\chi_k(x) = k-j+1, \quad
x \in \big[\tfrac{2(j-1)}{k(k+1)}, \,
 \tfrac{2j}{k(k+1)}\big), \quad 1 \leq j \leq k,
\end{equation}
and $\chi_k(x) = 0$ outside the interval
$[0, \frac{2}{k+1})$.

 Let $\{\alpha_n\}$, $n\in\Z$, be a bounded sequence of real numbers. To such a sequence
 we associate a function $F$ on the real line, defined by
 \begin{equation}
\label{eqR21.4}
F(x):=\sum_{n \in \Z} F_n(x), \quad x\in\R,
 \end{equation}
where we let
 \begin{equation}
\label{eqR21.5}
F_n(x) := \sign(\alpha_n) \cdot \chi_{n^2+1}(\tfrac{x-n}{\alpha_n})
 \end{equation}
for each $n \in \Z$ such that $\alpha_n \neq 0$, while if $\alpha_n=0$
then we let $F_n(x):=0$.

(The notation 
 $\sign(\alpha_n)$ means $+1$ or $-1$ depending on whether
$\alpha_n>0$ or $\alpha_n<0$.)

One can verify, using the assumption
that  the sequence $\{\alpha_n\}$ is bounded, 
that the series
\eqref{eqR21.4} converges in the space
of tempered distributions. For instance,
this follows from the fact 
 that $(1+x^2)^{-1} \sum |F_n(x)|$
is a bounded function.

 \begin{thm}
\label{thmR7.2}
Let $0<\eps<1$. Then for any
sufficiently small $r>0$ one can find
a real sequence $\alpha =
\{\alpha_n\}$, $n\in\Z$, satisfying
\begin{equation}
\label{eqR7.2.5}
(1-\eps) r \le |\alpha_n|  \le   (1+\eps) r,
\quad n \in \Z,
\end{equation}
and such that $\ft{F}=0$ in $(-\half,\half)$,
where $F$ is the function  defined by \eqref{eqR21.4}.
 \end{thm}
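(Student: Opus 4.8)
The plan is an implicit‑function (contraction mapping) argument in which the real sequence $\alpha=\{\alpha_n\}$ is the unknown. To say that $\ft F=0$ in $(-\half,\half)$ is to say that $\ft F(\psi)=F(\ft\psi)=\sum_{n\in\Z}F_n(\ft\psi)=0$ for every test function $\psi$ with support compactly contained in $(-\half,\half)$. Since each $F_n$ is supported in a short interval at $n$ of length $2|\alpha_n|/(n^2+2)\le(1+\eps)r$, while $\ft\psi$ is entire, the first step is to expand $\ft\psi$ in its Taylor series about $n$ and insert the scaling identity
\[
\int_\R (x-n)^j F_n(x)\,dx=\alpha_n^{j+1}\,\mu_j(n^2+1),\qquad \mu_j(k):=\int_\R y^j\chi_k(y)\,dy,
\]
(valid for $\alpha_n$ of either sign), which yields the exact expansion
\[
\ft F(\psi)=\sum_{n\in\Z}\alpha_n\,\ft\psi(n)\;+\;\sum_{n\in\Z}\sum_{j\ge 1}\frac{\ft\psi^{(j)}(n)}{j!}\;\alpha_n^{j+1}\,\mu_j(n^2+1).
\]
Here $\mu_0(k)=\int\chi_k=1$, whereas $\supp\chi_k\sbt[0,\tfrac{2}{k+1})$ and $\chi_k\le k$ give $\mu_j(k)\le\frac{2^{j+1}}{j+1}k^{-j}$; consequently the double sum over $j\ge1$ converges absolutely, is $O(r^2)$ uniformly for $|\alpha_n|\le(1+\eps)r$, and (reorganizing, since $\sum_n\alpha_n^{j+1}\mu_j(n^2+1)e^{-2\pi int}$ has an absolutely convergent Fourier series for each $j\ge1$) it equals $\int_{-1/2}^{1/2}h\,\psi$ for an $L^\infty$ function $h$ with $\|h\|_\infty\lesssim r^2$.

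The base point of the iteration is $\alpha_n=(-1)^n r$. For this choice the first sum above is $r\sum_n(-1)^n\ft\psi(n)$, which \emph{vanishes}: Poisson summation gives $\sum_n(-1)^n\ft\psi(n)=\sum_{k\in\Z}\psi(k+\half)$, and every point $k+\half$ lies outside $(-\half,\half)$, where $\psi$ is supported. Thus at the base point $\ft F$ already vanishes in $(-\half,\half)$ up to an error of size $O(r^2)$, and it remains to cancel this error by a perturbation. I would write $\alpha_n=(-1)^n(r+\gamma_n)$ with an unknown real sequence $\gamma=\{\gamma_n\}$, $\|\gamma\|_\infty\le\eps r$; then $\sign(\alpha_n)=(-1)^n$, $|\alpha_n|=r+\gamma_n$, the leading sum becomes $\sum_n(-1)^n\gamma_n\ft\psi(n)$ (the $r$‑part still vanishing), and the equation $\ft F=0$ in $(-\half,\half)$ takes the form
\[
\sum_{n\in\Z}(-1)^n\gamma_n\,\ft\psi(n)\;=\;-\int_{-1/2}^{1/2}h_\gamma(t)\,\psi(t)\,dt,\qquad \supp\psi\sbt(-\half,\half),
\]
where $h_\gamma$ is the $L^\infty$ function from the previous step, now depending on $\gamma$ through $\alpha$, with $\|h_\gamma\|_\infty\lesssim r^2$.

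It remains to solve this fixed‑point equation. The left‑hand side equals $\int_{-1/2}^{1/2}P_\gamma(t-\half)\psi(t)\,dt$, where $P_\gamma(s)=\sum_n\gamma_n e^{-2\pi ins}$ is the $1$‑periodic distribution whose Fourier coefficients are the $\gamma_n$; hence the linear operator $\gamma\mapsto\bigl(\psi\mapsto\sum_n(-1)^n\gamma_n\ft\psi(n)\bigr)$ is inverted simply by reading off the Fourier coefficients over one period, and this inverse obeys the clean estimate $\|\cdot\|_{\ell^\infty}\le\|\cdot\|_{L^1(-1/2,1/2)}$. Composing it with $\gamma\mapsto-h_\gamma$ gives a map $T$ on $\ell^\infty(\Z;\R)$ — and $T$ does send real sequences to real sequences, because $F$ is real‑valued, so $h_\gamma$ is Hermitian and its Fourier coefficients are real — with $\|T\gamma\|_\infty\le\|h_\gamma\|_{L^1(-1/2,1/2)}\lesssim r^2$ and, by the same estimate applied to differences of the quantities $\alpha_n^{j+1}$, $\|T\gamma-T\gamma'\|_\infty\lesssim r\,\|\gamma-\gamma'\|_\infty$. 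For $r$ small enough $T$ is therefore a contraction of the ball $\{\|\gamma\|_\infty\le\eps r\}$ into itself; its fixed point $\gamma^{*}$ gives the sequence $\alpha_n=(-1)^n(r+\gamma_n^{*})$, which satisfies $|\alpha_n|=r+\gamma_n^{*}\in[(1-\eps)r,(1+\eps)r]$ and $\ft F=0$ in $(-\half,\half)$, as required.

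The main effort will be in the two quantitative estimates that make the contraction work: that the ``$j\ge1$'' part is $O(r^2)$ in sup norm and $O(r)$‑Lipschitz in $\gamma$, both uniformly in $n$. This is exactly where the particular staircase $\chi_{n^2+1}$ is needed — the choice of $n^2+1$ steps forces $\mu_j(n^2+1)=O(n^{-2j})$, which decays fast enough in $n$ for the sums over $n$ to converge, and fast enough in $j$ for the $j$‑th correction to be of order $r^{j+1}$, so that the Taylor series over $j$ can be summed and all bounds remain uniform as $r\to0$. A secondary point needing care is identifying the correct base point $\alpha_n=(-1)^n r$, together with the Poisson‑summation cancellation that makes its contribution vanish, and tracking the reality of $F$ throughout so that the sequence $\gamma^{*}$ produced by the fixed point is genuinely real.
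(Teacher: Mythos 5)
Your proposal is correct and follows essentially the same route as the paper: a contraction-mapping (implicit function) argument around the base point $\alpha_n=(-1)^n r$, using the Poisson-summation fact that $\sum_n(-1)^n e^{2\pi i n t}$ vanishes on $(-\half,\half)$, a quadratic bound and an $O(r)$-Lipschitz bound for the nonlinear remainder, and inversion of the linear part by reading off Fourier coefficients over one period. Your moment expansion $\mu_j(n^2+1)=O(n^{-2j})$ is just a reorganization of the paper's direct estimates on $\ft{\chi}_k(-s)-1$ (which are themselves derived from the first moment bound), so the two arguments coincide in substance.
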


We will first prove \thmref{thmR7.2} and then
use it to deduce  \thmref{thmR7.43}.

\subsubsection{}
We will need the following lemma.
\begin{lem}
  \label{lemR20.90}
The function $\chi_k$ has the following properties:
\begin{enumerate-roman}
\item \label{lemR20.90.1} $\int \chi_k(x) dx = 1$;
\item \label{lemR20.90.2} $\int x \chi_k(x) dx \leq C k^{-1}$;
\item \label{lemR20.90.3} $| \ft{\chi}_k(-s) - 1 | \leq C |s| k^{-1}$;
\item \label{lemR20.90.4} 
$\big|  v \big( \ft{\chi}_{k}(- v) - 1\big) -
u \big( \ft{\chi}_{k}(- u) - 1\big) \big|
\leq C k^{-1} |v-u| \cdot \max\{|u|, |v|\}$
\end{enumerate-roman}
for every $s,u,v \in \R$,  where $C>0$ is an absolute constant.
\end{lem}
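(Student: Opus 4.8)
The plan is to reduce all four estimates to a single structural fact about $\chi_k$: it is a nonnegative function with total integral $1$, supported in the interval $[0,\tfrac{2}{k+1}]$, whose length is $O(1/k)$. First I would verify (i) by a one-line summation: the $k$ steps of $\chi_k$ all have width $\tfrac{2}{k(k+1)}$ and heights $k,k-1,\dots,1$, so $\int\chi_k = \tfrac{2}{k(k+1)}\sum_{j=1}^{k}j = 1$. Property (ii) then follows immediately from the support bound, since $0\le x\le\tfrac{2}{k+1}$ on $\supp\chi_k$ forces $\int x\,\chi_k(x)\,dx\le\tfrac{2}{k+1}\int\chi_k = \tfrac{2}{k+1}\le\tfrac{2}{k}$; (there is also an exact value $\int x\,\chi_k(x)\,dx=\tfrac{2k+1}{3k(k+1)}$, but it is not needed).

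For (iii), I would use (i) to write $\ft{\chi}_k(-s)-1 = \int\chi_k(x)\bigl(e^{2\pi isx}-1\bigr)\,dx$, bound $|e^{2\pi isx}-1|\le 2\pi|s|\,x$ for $x\ge 0$ via the elementary inequality $|e^{i\theta}-1|\le|\theta|$, and then integrate against $\chi_k$ and apply (ii); this gives the asserted bound with $C=4\pi$.

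For (iv), the key idea is to regard $g(t):= t\bigl(\ft{\chi}_k(-t)-1\bigr) = \int\chi_k(x)\,\phi_x(t)\,dx$, where $\phi_x(t):= t\bigl(e^{2\pi itx}-1\bigr)$, and to estimate $g(v)-g(u)$ by controlling $\phi_x$ on the segment between $u$ and $v$. Differentiating, $\phi_x'(t) = \bigl(e^{2\pi itx}-1\bigr)+2\pi itx\,e^{2\pi itx}$, so $|\phi_x'(t)|\le 4\pi|t|\,|x|$, and the fundamental theorem of calculus gives $|\phi_x(v)-\phi_x(u)|\le 4\pi|v-u|\max\{|u|,|v|\}\,|x|$ (using that $|t|\le\max\{|u|,|v|\}$ for $t$ between $u$ and $v$); integrating against $\chi_k$ and invoking (ii) yields (iv) with $C=8\pi$.

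I do not expect any genuine obstacle here: the whole lemma is a routine consequence of $\chi_k$ being a probability density concentrated near the origin on scale $1/k$, together with first-order Taylor / mean-value estimates on the exponential. The only mild care needed is the bookkeeping of the absolute constant, and taking $C=8\pi$ works uniformly in \ref{lemR20.90.1}--\ref{lemR20.90.4}.
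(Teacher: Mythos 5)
Your proof is correct and follows essentially the same route as the paper: (i) by direct summation, (ii) from the support/first-moment bound, (iii) via $|e^{i\theta}-1|\le|\theta|$ together with (ii), and (iv) by differentiating $t\mapsto t\bigl(e^{2\pi itx}-1\bigr)$ and applying the fundamental theorem of calculus plus (ii), which is exactly the paper's computation with $\varphi(y)=y(e^{2\pi iy}-1)$. The only cosmetic difference is in (ii), where you bound $x\le 2/(k+1)$ on the support while the paper bounds $\chi_k\le k$; both give the same $O(1/k)$ estimate.
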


\begin{proof}
The property \ref{lemR20.90.1} can be checked
directly. In order to establish \ref{lemR20.90.2}
we can estimate the left hand side by
$k \int_0^{2/(k+1)} x dx = 2k/(k+1)^2$.
Together with the estimate
\begin{equation}
\label{eqR20.4.2}
\textstyle | \ft{\chi}_k(-s) - 1 | = 
\Big| \int  \chi_k(x) (e^{2 \pi i sx}-1) dx \Big|
\leq 2 \pi |s| \int  x \chi_k(x) dx
\end{equation}
this yields  \ref{lemR20.90.3}. 
Finally, to establish \ref{lemR20.90.4}
consider the function
 $\varphi(x) := x ( e^{2 \pi i x} - 1)$.
We may suppose that $u<v$,  then
the left hand side of \ref{lemR20.90.4} is equal to
\begin{align}
\label{eqR20.5.7}
& \Big|  \int \chi_k(x) \frac{\varphi(vx) - \varphi(ux)}{x} dx \Big| =
\Big|  \int \chi_k(x) \int_u^v \varphi'(sx) ds dx \Big|  \\
\label{eqR20.5.8}
&\qquad\qquad\leq |v-u| \cdot \max_{s \in [u,v]} 
  \int \chi_k(x) |\varphi'(sx)|  dx.
\end{align}
If we use the estimate $|\varphi'(sx)| \leq C|sx|$, then
\eqref{eqR20.5.8} and  \ref{lemR20.90.2} imply
\ref{lemR20.90.4}.
\end{proof}

\subsubsection{}
Let $X$ be the space of all bounded sequences of real numbers
$\alpha = \{\alpha_n\}$,  $n \in \Z$,
endowed with the norm
\[
\|\alpha\|_X := \sup_{n \in \Z} |\alpha_n|
\]
that makes $X$ into a real Banach space.

Denote $I := [-\half, \half]$. Let
 $Y$ be the space of all continuous functions $\psi:I \to \CC$ 
satisfying $\psi(-t)=\overline{\psi(t)}$ for all $t\in I$. 
If we endow $Y$ with the  norm $\|\psi\|_Y=\sup|\psi(t)|$, $t\in I$,
then also $Y$ is a real Banach space.

Let $\alpha = \{\alpha_n\}$, $n \in \Z$, be a  sequence  in $X$. Define 
\begin{equation}
\label{eqR2.3}
(R \alpha)(t):= \sum_{n \in \Z}  e^{2\pi int}\cdot \alpha_n 
\big( \ft{\chi}_{n^2+1}(-\alpha_n t) - 1\big).
\end{equation}
We notice that the terms of the series \eqref{eqR2.3} 
are elements of $Y$.
By \lemref{lemR20.90}\ref{lemR20.90.3}, the $n$'th term of the series
is bounded by 
$C |\alpha_n|^2  (n^2+1)^{-1} $
uniformly on $I$, 
 where $C>0$ does not
 depend on $\alpha$ or $n$.
Hence the series \eqref{eqR2.3} converges
uniformly on $I$ to an element of $Y$, and we have
\begin{equation}
\label{eqR2.4.1}
\|R \alpha\|_Y \leq C \|\alpha\|^2_X,
\end{equation}
where the constant $C$ does not depend on $\alpha$.

We note that the mapping $R : X \to Y$ 
defined by \eqref{eqR2.3} is  \emph{nonlinear}.

\subsubsection{}
For each $r>0$ let
 $U_r$ denote the closed ball of radius $r$ around the origin
in $X$:
\begin{equation}
\label{eqR3.1.1}
U_r := \{\alpha \in X : \|\alpha\|_X \leq r\}.
\end{equation}

\begin{lem}
\label{lemR3.1}
Given any $\rho > 0$ there is $r>0$ such that
\begin{equation}
\label{eqR3.1.2}
\|R\beta - R\alpha\|_Y \leq \rho \|\beta-\alpha\|_X,
\quad \alpha,\beta \in U_r.
\end{equation}
In particular, if $r$ is small enough then $R$ is
a contractive (nonlinear) mapping on $U_r$.
\end{lem}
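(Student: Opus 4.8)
The plan is to bound $R\beta - R\alpha$ term by term via part~\ref{lemR20.90.4} of \lemref{lemR20.90}, and then to sum the resulting estimates using the convergence of $\sum_{n\in\Z}(n^2+1)^{-1}$.

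First, fix $\alpha,\beta\in U_r$ and $t\in I$ with $t\neq 0$. The key manipulation will be to rewrite each summand of \eqref{eqR2.3} as
\[
\alpha_n\bigl(\ft{\chi}_{n^2+1}(-\alpha_n t)-1\bigr)=\frac{1}{t}\cdot u\bigl(\ft{\chi}_{n^2+1}(-u)-1\bigr),\qquad u:=\alpha_n t,
\]
and likewise with $\beta_n$ and $v:=\beta_n t$; the point is that the left-hand sides are exactly what appears in $R$, while the right-hand sides are, up to the factor $t^{-1}$, precisely the quantities compared in part~\ref{lemR20.90.4} of \lemref{lemR20.90}. Consequently the $n$-th term of $(R\beta)(t)-(R\alpha)(t)$, which carries only the unimodular factor $e^{2\pi int}$ in front, has absolute value
\[
\frac{1}{|t|}\,\bigl|\,v\bigl(\ft{\chi}_{n^2+1}(-v)-1\bigr)-u\bigl(\ft{\chi}_{n^2+1}(-u)-1\bigr)\,\bigr|,
\]
and applying \lemref{lemR20.90}\ref{lemR20.90.4} with $k=n^2+1$, together with $|v-u|=|\beta_n-\alpha_n|\,|t|$ and $\max\{|u|,|v|\}=\max\{|\alpha_n|,|\beta_n|\}\,|t|$, bounds it by $C\,(n^2+1)^{-1}\,|\beta_n-\alpha_n|\cdot\max\{|\alpha_n|,|\beta_n|\}\cdot|t|$. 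Since $|t|\le\half$ on $I$, $\max\{|\alpha_n|,|\beta_n|\}\le r$, and $|\beta_n-\alpha_n|\le\|\beta-\alpha\|_X$, the $n$-th term is at most $\half Cr\,(n^2+1)^{-1}\|\beta-\alpha\|_X$, uniformly in $t\in I$. At $t=0$ each term of $R\alpha$ and of $R\beta$ vanishes because $\ft{\chi}_k(0)=\int\chi_k=1$ by \lemref{lemR20.90}\ref{lemR20.90.1}, so the same bound is trivially valid there.

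Summing over $n\in\Z$ and setting $C_0:=\sum_{n\in\Z}(n^2+1)^{-1}<\infty$ then yields
\[
\|R\beta-R\alpha\|_Y\le\half C C_0\,r\,\|\beta-\alpha\|_X,\qquad\alpha,\beta\in U_r,
\]
so it suffices to take $r\le 2\rho/(CC_0)$ to obtain \eqref{eqR3.1.2}; the final assertion follows by applying the estimate with $\rho=\half$. I do not anticipate a genuine obstacle here: the only delicate point is the bookkeeping of the factor $t$, which is exactly why part~\ref{lemR20.90.4} of \lemref{lemR20.90} (rather than the cruder part~\ref{lemR20.90.3}) is the relevant tool and why the case $t=0$ must be handled separately; once the termwise estimate is in hand, the summation and the choice of $r$ are immediate.
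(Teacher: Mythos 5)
Your proof is correct and follows essentially the same route as the paper: both apply \lemref{lemR20.90}\ref{lemR20.90.4} with $u=\alpha_n t$, $v=\beta_n t$ to bound the $n$-th term of \eqref{eqR3.2} by $Cr(n^2+1)^{-1}|\beta_n-\alpha_n|$ uniformly on $I$, sum over $n$, and then choose $r$ small. You merely spell out the details the paper leaves implicit (the factor $t^{-1}$ bookkeeping and the trivial case $t=0$), which is fine.
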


\begin{proof}
Let $\alpha,\beta \in U_r$. Then using
\eqref{eqR2.3}  we  have
\begin{equation}
\label{eqR3.2}
(R \beta - R \alpha)(t) =
\sum_{n \in \Z} e^{2\pi int}\cdot 
\Big[
 \beta_n \big( \ft{\chi}_{n^2+1}(-\beta_n t) - 1\big) -
 \alpha_n  \big( \ft{\chi}_{n^2+1}(-\alpha_n t) - 1\big) \Big].
\end{equation}
By \lemref{lemR20.90}\ref{lemR20.90.4}, 
 the $n$'th term of the series  is bounded by 
$C r (n^2+1)^{-1} |\beta_n - \alpha_n|$
uniformly on $I$, 
 where $C>0$ does not
 depend on $r$, $\alpha$, $\beta$ or $n$.
Hence the series converges
uniformly on $I$  and 
$\|R\beta - R\alpha\|_Y \leq Cr  \|\beta-\alpha\|_X$,
where $C$ is a constant not depending on $r$, $\alpha$ or $\beta$.
It thus suffices to choose $r$ small enough so that  $C r \leq  \rho$.
\end{proof}

\subsubsection{}
For each element $\psi \in Y$ we denote  by
$\F(\psi)$ the sequence
\begin{equation}
\label{eqR1.1.10}
\ft{\psi}(n) = \int_{I} \psi(t) e^{-2\pi i n t} dt, \quad n \in \Z,
\end{equation}
of Fourier coefficients of $\psi$. 
Since the Fourier coefficients
$\ft{\psi}(n)$ are
real and bounded, we have
a linear mapping  $\F: Y \to X$ satisfying
$\|\F(\psi)\|_X \leq \|\psi\|_Y$.

\begin{lem}
\label{lemR4.2}
Given any $\eps>0$ there is 
$\delta>0$ with the following property: 
Let $\beta \in X$,
$\|\beta\|_X  \le \delta$. Then one can find an element
 $\alpha \in X$,
$\|\alpha - \beta\|_X  \le   \eps \|\beta\|_X$,  which solves the
equation  $\alpha + \F(R \alpha) = \beta$.
\end{lem}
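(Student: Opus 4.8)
The plan is to solve the equation $\alpha + \F(R\alpha) = \beta$ by a fixed-point argument. Rewrite the equation as $\alpha = \beta - \F(R\alpha)$ and introduce the operator $T$ on $X$ defined by $T\alpha := \beta - \F(R\alpha)$; a solution of the equation is exactly a fixed point of $T$. Since $\F$ is linear with $\|\F(\psi)\|_X \le \|\psi\|_Y$, and by \eqref{eqR2.4.1} we have $\|R\alpha\|_Y \le C\|\alpha\|_X^2$, the quadratic gain of $R$ near the origin is what will make $T$ a contraction on a small ball, provided $\beta$ itself is small.

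First I would fix the radius. Given $\eps > 0$ (WLOG $\eps \le 1$), use \lemref{lemR3.1} to pick $r_0 > 0$ so small that $R$ is Lipschitz with constant $\rho := 1/2$ on $U_{r_0}$, i.e. $\|\F(R\beta') - \F(R\alpha')\|_X \le \|R\beta' - R\alpha'\|_Y \le \tfrac12\|\beta'-\alpha'\|_X$ for $\alpha',\beta' \in U_{r_0}$. Also, by \eqref{eqR2.4.1}, shrink $r_0$ further if necessary so that $C r_0 \le \tfrac{\eps}{2}$, where $C$ is the constant in \eqref{eqR2.4.1}. Now set $\delta := \tfrac{r_0}{2}$ and suppose $\|\beta\|_X \le \delta$. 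Consider the closed ball $B := \{\alpha \in X : \|\alpha - \beta\|_X \le \eps\|\beta\|_X\}$; since $\eps \le 1$ and $\|\beta\|_X \le \delta = r_0/2$, we have $B \subset U_{r_0}$, so \lemref{lemR3.1} and \eqref{eqR2.4.1} both apply on $B$.

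Next I would check that $T$ maps $B$ into itself and is a contraction there. For $\alpha \in B$ we have $\|T\alpha - \beta\|_X = \|\F(R\alpha)\|_X \le \|R\alpha\|_Y \le C\|\alpha\|_X^2 \le C(2\|\beta\|_X)^2 \le 4C\|\beta\|_X \cdot \|\beta\|_X \le 4C\delta\|\beta\|_X$; here I must be careful with constants, so instead I would bound $\|\alpha\|_X \le \|\beta\|_X + \eps\|\beta\|_X \le 2\|\beta\|_X$ and write $\|T\alpha - \beta\|_X \le C\|\alpha\|_X^2 \le C\|\alpha\|_X\cdot 2\|\beta\|_X \le 2Cr_0\|\beta\|_X \le \eps\|\beta\|_X$ using $\|\alpha\|_X \le r_0$ and $2Cr_0 \le \eps$. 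Thus $T(B) \subset B$. For the contraction property, if $\alpha, \alpha' \in B \subset U_{r_0}$ then $\|T\alpha - T\alpha'\|_X = \|\F(R\alpha') - \F(R\alpha)\|_X \le \tfrac12\|\alpha - \alpha'\|_X$ by the choice of $r_0$ via \lemref{lemR3.1}. Since $B$ is a nonempty closed subset of the Banach space $X$, the Banach fixed-point theorem gives a unique $\alpha \in B$ with $T\alpha = \alpha$, i.e. $\alpha + \F(R\alpha) = \beta$ and $\|\alpha - \beta\|_X \le \eps\|\beta\|_X$, as required.

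The only real subtlety — and the step I would treat most carefully — is matching up the two smallness requirements on $r_0$: one forces the Lipschitz constant of $R$ below $\tfrac12$ (so $T$ is a genuine contraction), and the other forces $2Cr_0 \le \eps$ (so the self-map estimate lands inside the prescribed ball $\|\alpha-\beta\|_X \le \eps\|\beta\|_X$ rather than merely some $O(1)$ ball). Both come for free from \lemref{lemR3.1} and \eqref{eqR2.4.1}, so once the bookkeeping of constants is arranged the argument is routine; there is no analytic obstacle beyond what is already packaged in those two lemmas.
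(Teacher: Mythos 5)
Your proposal is correct and follows essentially the same route as the paper: both define the map $\alpha \mapsto \beta - \F(R\alpha)$, show it sends the ball $\{\alpha : \|\alpha-\beta\|_X \le \eps\|\beta\|_X\}$ into itself using the quadratic bound \eqref{eqR2.4.1}, verify contractivity via \lemref{lemR3.1}, and invoke the Banach fixed point theorem. The only difference is cosmetic bookkeeping of constants (your reduction to $\eps\le 1$ and the bound $\|\alpha\|_X\le 2\|\beta\|_X$ versus the paper's $(1+\eps)\|\beta\|_X$ and the choice $C(1+\eps)^2\delta\le\eps$).
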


 \begin{proof} 
Fix $\beta \in X$ such that  $\|\beta\|_X \le \delta$,
and let 
\[
B = B(\beta,\eps) :=\{ \alpha \in X : \|\alpha-\beta\|_X \le \eps \|\beta\|_X\}.
\]
We observe that if $\alpha \in B$ then  $\|\alpha\|_X \le (1+\eps) \|\beta\|_X$.
Define a map $H: B \to X$ by
\[
H(\alpha) := \beta - \F(R\alpha), \quad \alpha \in B,
\]
and notice that an element $\alpha \in B$ is a solution to the equation
$\alpha + \F(R\alpha) = \beta$ 
if and only if $\alpha$ is a fixed point of the map $H$.
 
Let us show that if $\delta$ is small enough then $H(B)\subset B$.
 Indeed, if $\alpha \in B$ then using \eqref{eqR2.4.1} we have
\[
\|H(\alpha)-\beta\|_X = \|\F(R\alpha)\|_X 
\leq \|R \alpha\|_Y  \leq
 C \|\alpha\|^2_X \leq C (1+\eps)^2 \|\beta\|^2_X.
\]
Hence if we choose $\delta$ such that 
$C(1+\eps)^2 \delta \leq \eps$ then
we obtain
\[
\|H(\alpha)-\beta\|_X \leq \eps \|\beta\|_X,
\]
and it follows that $H(B)\subset B$.
 
It also follows  from \lemref{lemR3.1} 
that if $\delta$ is small enough, then $H$ is a contractive 
mapping from the closed set $B$ into itself. Indeed,
let $\alpha',  \alpha'' \in B$, then we have
\[
\|H(\alpha'') - H(\alpha') \|_X = 
\|\F(R\alpha'' - R\alpha') \|_X \le
\|R\alpha'' - R\alpha' \|_Y \le
\rho \|\alpha'' - \alpha'\|_X,
\]
where $0<\rho<1$.
Then the Banach fixed point theorem implies that
$H$ has a (unique) fixed point $\alpha \in B$,
which yields the desired solution.
 \end{proof}

\subsubsection{Proof of \thmref{thmR7.2}}
Let $r>0$, and let $\beta = \{\beta_n\}$, $n \in \Z$, be a real sequence 
defined by $\beta_n := (-1)^n r$. Then $\|\beta\|_X = r$. 
 If  $r=r(\eps)>0$ is small enough,
then by  \lemref{lemR4.2} there is an element
$\alpha \in X$,  
$\|\alpha - \beta\|_X  \le   \eps \|\beta\|_X$,  which solves the
equation  $\alpha + \F(R \alpha) = \beta$.

We observe that the estimate
$\|\alpha - \beta\|_X  \le   \eps \|\beta\|_X$
implies \eqref{eqR7.2.5}.

The relation $\alpha + \F(R \alpha) = \beta$ means
that $\beta - \alpha$ is the
 sequence of Fourier coefficients  of 
the function $R \alpha$. This implies
that the series
\[
\sum_{n \in \Z} (\beta_n - \alpha_n) e^{2\pi int} 
\]
converges in $ L^2(I)$  to $R \alpha$.

Since we have $\beta_n = (-1)^n r$, $n \in \Z$,
the series
\[
\sum_{n \in \Z} \beta_n e^{2\pi int} 
\]
converges in the distributional sense to  the measure
$r \cdot \del_{\Z +  \frac1{2}}$  on $\R$.
In particular, this series converges to zero
 in the open interval $(-\frac1{2},\frac1{2})$.

Let  $F$ be the function given by \eqref{eqR21.4}
associated to the sequence $\alpha=\{\alpha_n\}$. Then
\[
\hat F (-t) = \lim_{N\to\infty}\sum_{|n|\le N}\hat F_n(-t)
\]
 in the sense of distributions, and by \eqref{eqR21.5} we have
 \begin{equation}
\label{eqR21.6}
\ft{F}_n(-t) = e^{2 \pi i n t} \cdot \alpha_n \, \ft{\chi}_{n^2+1}(-\alpha_n t).
 \end{equation}
Hence
\[
\hat F(-t) =
\lim_{N\to\infty} \Big[ \sum_{|n|\le N} \beta_n e^{2\pi int} 
 - \sum_{|n|\le N} (\beta_n - \alpha_n )e^{2\pi int} 
 + \sum_{|n|\le N}   e^{2\pi int}\cdot \alpha_n 
\big( \ft{\chi}_{n^2+1}(-\alpha_n t) - 1\big) \, \Big].
\]
The first sum converges 
in the distributional sense to zero in $(-\half, \half)$.
The second sum converges in
 $L^2(I)$ to $R \alpha$. The third
sum converges  to $R\alpha$
uniformly on $I$.
 It follows that the distribution $\hat F$ vanishes
 in the open interval $(-\half, \half)$.
\qed

\subsection{Proof of \thmref{thmR7.43}}
Let $0<\eps<1$ be given, and for $r=r(\eps)>0$ sufficiently small 
let  $\{\alpha(n)\}$, $n \in \Z$,
 be the sequence given by \thmref{thmR7.2}. Define
 \begin{equation}
\label{eqR7.40.1}
\Lam_n = \Big\{ n +  \frac{2 j\alpha_n }{(n^2+1)(n^2+2)} , \quad 1 \leq j \leq n^2+1  \Big\}
\end{equation}
and
 \begin{equation}
\label{eqR7.40.2}
\Lam = \bigcup_{n \in \Z} \Lam_n.
\end{equation}
We have $\alpha_n \neq 0$
for every $n \in \Z$,
due to \eqref{eqR7.2.5}. Hence $\Lam_n$ is a 
 set with exactly $n^2+1$ elements.
The set $\Lam_n$ is contained in the
interval $[n - |\alpha_n|, n + |\alpha_n|]$.
This yields \eqref{eqR7.30.12}  provided
that $r>0$ is small enough,
again due to \eqref{eqR7.2.5}.
In particular we may assume that
the sets $\Lam_n$ are pairwise disjoint.

Observe that the distributional derivative of 
the function $F$ in \eqref{eqR21.4} is
\[
F'=\sum_{n\in \Z} F'_n =
\sum_{n\in \Z}((n^2+1) \delta_n - \delta_{\Lam_n}),
\]
and hence
\[
\delta_{\Lam} = \sum_{n\in \Z} (n^2+1) \delta_n -  F'.
\]
The Fourier transform
of the measure $\sum_{n \in \Z} (n^2+1) \delta_n$ is
$\sum_{k \in \Z} \big(\delta_k - (4\pi^2)^{-1}  \delta''_k \big)$,
which follows from Poisson's summation formula.
This implies that
\[
\hat \delta_\Lambda= \delta_0 - \frac{\delta''_0}{4\pi^2}  -\ft{{F'}}
\quad \text{in $(-\half, \half)$.}
\]
But since $\ft{F}$ vanishes in $(-\half,\half)$, then the same is true 
for $\ft{{F'}}$, so \eqref{eqR7.30.9} is established.
\qed

\subsection{Proof of \thmref{thmA9}}
Finally we show that if
$f+\Lam$ is a tiling at some level $w$,
where $\Lambda \sbt \R$ is  any set of
tempered growth but not of bounded density,
and $f$ is any function in the Schwartz class,
then $f$ must have zero integral.

Suppose to the contrary that 
$\int f = \ft{f}(0)$ is nonzero. Then,
due to the continuity and smoothness of $\ft{f}$,
there is a Schwartz function
$g$ such that $\ft{f} \cdot \ft{g}=1$
 in some neighborhood $(-a,a)$ of the origin.
Let $h>0$ be a Schwartz function
with $\supp(\ft{h})\subset (-a,a)$, then 
\[
\ft{h} \cdot \ft{g} \cdot \ft{f}  =\ft{h}
\]
 and hence
\[
h \ast g \ast f = h.
\]
It follows that
\[
h \ast \del_\Lam =
(h \ast g \ast f) \ast \del_\Lam =
(h \ast g) \ast (f \ast \del_\Lam) =
w \cdot \textstyle\int  (h \ast g),
\]
where in the last equality we used the 
tiling assumption $f \ast \del_\Lam = w$ 
(the associativity of the convolution is justified
since $\del_\Lam$ is a tempered distribution
and $f,g,h$ are Schwartz functions,
see \cite[Theorem 7.19]{Rud91}).

We conclude that $h + \Lam$ is a tiling
(at a certain level). But it is known, see
\cite[Lemma 2.1]{KL96},
that if $h$ is a \emph{nonnegative}, nonzero
function and if $h + \Lam$ is a tiling,
then the set $\Lam$ must have bounded density.
We thus arrive at a contradiction.
\qed

% =======================================

\section{Open problems} 
\label{secOP1}

We conclude the paper by posing some open problems.

\subsection{Tiling  at level zero}
The following problem was already mentioned 
in \secref{secTLZ} above:
Let $f+\Lam$ be a tiling  at level zero,
where the function $f \in L^1(\R)$ is nonzero and
the set $\Lam \sbt \R$ is  nonempty and has bounded density.
Does it follow that $\Lam$ has a \emph{uniform density}
$D(\Lam)$\,?

In Fourier analytic terms,
the problem can be equivalently  stated 
as follows: Let  $\Lam \sbt \R$ be
  nonempty and have bounded density, and suppose
that $\ft{\del}_\Lam$ vanishes on some
open  interval $(a,b)$. Does 
$\Lam$ necessarily have a uniform density?

What makes the problem nontrivial is 
the  existence of tilings  $f+\Lam$ 
at level zero such that
$\ft{\del}_\Lam$ is not
a scalar multiple of $\del_0$ in any
neighborhood of the origin,
see \cite[Section 5]{Lev20}.
In particular, \lemref{lem5.2} does not apply.

We note that by \thmref{thmA6} the set
$\Lam$ must be relatively dense,
so if the density $D(\Lam)$ exists then it 
is a strictly positive number.

\subsection{Non-periodic tilings} 
Let $f$ be a nonzero function in $L^1(\R)$, and suppose that
the set $\{x  : f(x) \neq 0\}$ has \emph{finite measure}.
If $f$ tiles at some level $w$ by a translation
set $\Lam \sbt \R$ of  bounded density, does it follows
that $\Lam$ has a periodic structure?

\thmref{thmLM91} does not apply here,
since $f$ is \emph{not} assumed to have  compact support.

 Does there exist a measurable set
$\Om \sbt \R$, $0<\mes(\Om)<+\infty$,
whose indicator function $\1_\Om$ can tile
at level one, or, a weaker requirement, at some other integer
level $w$, with a translation set $\Lam \sbt \R$ 
that \emph{does not} have a periodic structure?

Notice that
such a set $\Om$ (if it exists) must be unbounded, 
again due to \thmref{thmLM91}.

\subsection{Tilings of unbounded density}
Let $f \in L^1(\R)$ be nonzero and have \emph{compact support}, and 
suppose that $f+\Lam$ is a tiling at some level $w$, where $\Lam \sbt \R$
 is a discrete set (not a multi-set) of tempered growth. 
Does it follow that $\Lam$ is of the form \eqref{eqI2.1},
i.e. $\Lambda$ is a set
of bounded density having a periodic structure?

In other words, the question is whether
\thmref{thmLM91} remains valid if the set
$\Lambda$  is not assumed to have
 bounded density, but only tempered growth.

We note that \thmref{thmA3} does \emph{not} provide a negative
 answer to this question, since the function $f$ constructed in 
the proof of this theorem has \emph{unbounded} support.

\subsection{Lattice tilings} 
Let $f \in L^1(\R^d)$, $d \geq 1$, and suppose that
$f$ tiles at some level $w$
with a translation set $\Lambda \sbt \R^d$ of
 bounded density.\footnote{A set
$\Lambda \sbt \R^d$ is said to have
 \emph{bounded density} if there exists
$M>0$ such that $\#(\Lam \cap (x+B)) \leq M$
for all $x \in \R^d$, where $B$ is the open
unit ball in $\R^d$.}
 Does there necessarily exist a \emph{lattice}
$L \sbt \R^d$ such that
$f+L$ is also a  tiling,  possibly at a 
different level $w'$\,?

The answer is  known to be affirmative 
in the special case where $\Lambda$ is assumed to be
a disjoint union of finitely
many  translated lattices, namely,
$\Lam = \biguplus_{j=1}^{N} (L_j + \tau_j)$
where each $L_j$ is a lattice in
$\R^d$ and the $\tau_j$ are translation vectors.
This was proved in dimension one in 
\cite[p.\ 673]{KL96}, while in several dimensions
the result was proved more recently in \cite[Theorem 1.6]{Liu18}.
In both proofs, number theory plays an essential
 role:  the proof in $\R$ uses
the classical Skolem--Mahler--Lech theorem,
while in $\R^d$ the proof relies on
a result due to Evertse,  Schlickewei and  Schmidt
\cite{ESS02}.

% =======================================

%%% AUTHOR:
%%% Bibliography goes here. Note that the arXiv cannot process bibtex
%%% or biber bibliographies.  Example of acceptable bibliograpy format:

\bibliographystyle{amsplain}

%% AUTHOR: You can generate such a bibliography from a .bib file by 
%% running pdflatex/bibtex/pdflatex/pdflatex and then pasting the .bbl file
%% between \begin{thebibliography} and \end{bibliography}

%%% AUTHOR: Include a short description of each author following the
%%% structure below. Use the same short tags used previously.  
%%% Use \imageat{} and \imagedot{} instead of "@" and "." in
%%% email addresses-this replaces the symbols with graphics to avoid 
%%% e-mail address harvesting from the .pdf file

\begin{dajauthors}
\begin{authorinfo}[kolount]
M. N. Kolountzakis\\
Department of Mathematics and Applied Mathematics\\
University of Crete\\
Voutes Campus, GR-700 13\\
Heraklion, Crete\\
Greece\\
kolount@uoc.gr\\
\url{http://mk.eigen-space.org/}
\end{authorinfo}
\begin{authorinfo}[lev]
Nir Lev\\
Department of Mathematics\\
Bar-Ilan University\\
Ramat-Gan 5290002\\
Israel\\
levnir@math.biu.ac.il\\
\url{https://u.math.biu.ac.il/~levnir/}
\end{authorinfo}
\end{dajauthors}

\end{document}